\documentclass[11pt]{article}

\usepackage[margin=1.02in]{geometry}
\usepackage{amsmath,amssymb,amsthm,mathtools}
\usepackage{microtype}
\usepackage{xcolor}
\definecolor{mygreen}{RGB}{0,110,60}
\definecolor{myred}{RGB}{160,30,30}
\usepackage[
 colorlinks=true,
 linkcolor=myred,
 citecolor=mygreen,
 urlcolor=mygreen,
 pdfborder={0 0 0}
]{hyperref}
\usepackage{enumitem}
\usepackage{booktabs}
\usepackage{array}
\usepackage{graphicx}
\usepackage{tikz}
\usetikzlibrary{arrows.meta,positioning,calc,decorations.pathmorphing}

\newtheorem{theorem}{Theorem}[section]
\newtheorem{proposition}[theorem]{Proposition}
\newtheorem{lemma}[theorem]{Lemma}
\newtheorem{corollary}[theorem]{Corollary}
\theoremstyle{definition}
\newtheorem{definition}[theorem]{Definition}
\theoremstyle{remark}
\newtheorem{remark}[theorem]{Remark}

\newcommand{\Z}{\mathbb Z}

\newcommand{\Int}{\operatorname{int}}
\newcommand{\nn}{\langle\!\langle}
\newcommand{\NN}{\rangle\!\rangle}

\title{\textbf{$\mathbb{Z}$-Torus Exteriors and Small Knot-Surgery Four-Manifolds}}
\author{Anar Akhmedov\\
\small School of Mathematics, University of Minnesota\\
\small Visiting Scholar, Department of Mathematics, Harvard University}
\date{}

\begin{document}
\maketitle

\begin{abstract}
We study a cut-and-paste operation in which torus neighborhoods in the
author's symplectic building blocks $Y_K$ and $X_K$, associated to a
genus-one fibered knot $K$, are replaced by marked exteriors of tori in $S^4$ with
infinite cyclic complement group and two null peripheral slopes. We also recall the exact
Luttinger-surgery realization of $M_K\times S^1$ from
$\Sigma_g\times T^2$, keeping the knot-surgery/fiber-sum and
Luttinger-surgery viewpoints in the same framework.

For the trefoil block $Y_K$, two marked replacements give a simply
connected manifold with intersection form $H$, hence a manifold
homeomorphic to $S^2\times S^2$. A one-exterior gluing gives a smooth
homotopy $4$-sphere. For the rank-six construction we use the identity
double of two copies of $Y_K\setminus\nu\Sigma_2$, rather than the
involutive gluing defining the original $X_K$. Two marked replacements along the surviving rim tori give a simply
connected manifold with $e=8$ and $\sigma=0$. An explicit geometric
basis has intersection form $3H$, so the resulting manifold is
homeomorphic to $\#_3(S^2\times S^2)$. The Case II
small-perturbation Seiberg--Witten invariant vanishes. The
Seiberg--Witten invariant of the identity-glued rank-six family also
vanishes; in particular, these manifolds are nonsymplectic.
\end{abstract}

\medskip

\section{Introduction}

Let $C_T=S^4\setminus\Int\nu T$ be the exterior of an embedded torus.
The constructions below use exteriors for which
\[
\pi_1(C_T)\cong\mathbb Z
\]
is generated by a meridian and two primitive boundary classes are null in
$C_T$. After choosing these three classes as a marking of
$\partial C_T\cong T^3$, the two null directions can be matched with
specified generators in a knot-surgery block. The resulting gluing changes
the Euler characteristic, unlike ordinary torus surgery, and at the same
time gives a direct way to control the fundamental group.

The starting blocks are the symplectic manifolds $Y_K$ and $X_K$ introduced
in \cite{Akh07}; see also the related Luttinger-surgery, symplectic-sum,
and knot-surgery constructions with Ozbagci \cite{AO18}, using the knot-surgery framework of Fintushel--Stern
\cite{FS}. For a genus-one fibered knot $K$,
\[
Y_K=(M_K\times S^1)\#_{F=T_m}(M_K\times S^1),
\qquad
X_K=Y_K\#_{\Sigma_2}Y_K,
\]
where $M_K$ is zero surgery on $K$ and
$\Sigma_2\subset Y_K$ is the square-zero genus-two surface obtained from the
unused section and fiber. Section~\ref{sec:YK} recalls the gluing maps
because the peripheral identifications are needed later. It also records the complementary Luttinger-surgery viewpoint:
$M_K\times S^1$ is obtained from $\Sigma_g\times T^2$ by surgeries dictated
by a Dehn-twist factorization of the monodromy of $K$. We then recall
related product-surface Luttinger models starting from
$T^2\times\Sigma_2$ and $\Sigma_2\times\Sigma_2$. Except where an explicit
diffeomorphism is proved, these higher product models are used as comparison
constructions and are not identified with the particular $K$-dependent
manifolds $Y_K$ and $X_K$.

Our main example uses Boyle's turned $1$-twist-spun tori
\cite{Boyle}. Their complements have infinite cyclic fundamental group in
the case used here. Juhasz--Powell \cite[Theorem~1.1]{JP24} proved that
these tori are topologically unknotted; no smooth unknotting result is used.
A second marked exterior comes from a regular fiber of the
Matsumoto--Fukaya achiral torus fibration.

\begin{theorem}\label{thm:intro-main}
Let $K$ be the trefoil. Perform the two marked Boyle-exterior replacements
of Section~\ref{subsec:S2S2case} along the rim tori $R_y$ and $R_d$ in
$Y_K$. The resulting manifold $Z_K(B_1,B_2)$ is simply connected and
\[
Q_{Z_K(B_1,B_2)}\cong H.
\]
Hence
\[
Z_K(B_1,B_2)\approx S^2\times S^2.
\]
The two generators of $H_2(Z_K(B_1,B_2);\mathbb Z)$ are represented by a
square-zero genus-two surface and a square-zero torus meeting once.
\end{theorem}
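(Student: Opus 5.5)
We first compute the Euler characteristic and signature, then the fundamental group, then an explicit basis of $H_2$; Freedman's theorem will give the homeomorphism type.

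\emph{Euler characteristic and signature.} For the trefoil, $M_K\times S^1$ has $e=0$ and $\sigma=0$, and the symplectic sum along the square-zero torus $T_m$ gives $e(Y_K)=0$ and $\sigma(Y_K)=0$. Replacing $\nu T\cong T^2\times D^2$ (Euler characteristic $0$) by a torus exterior $C_T$ adds $e(C_T)=e(S^4)-e(T^2)=2$. Two replacements therefore give $e(Z_K(B_1,B_2))=4$. The boundary is a $T^3$, so Novikov additivity applies. Each $C_T$ has $\sigma=0$, because $H_2(C_T)$ has rank $2$ and is carried by the boundary with vanishing form. Hence $\sigma=0$. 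Once we show $\pi_1=1$, it follows that $b_1=b_3=0$ and $b_2=2$.

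\emph{Fundamental group.} We use van Kampen with the presentation of $\pi_1(Y_K\setminus(\nu R_y\cup\nu R_d))$ from Section~\ref{sec:YK}. The generators are those of the two copies of $\pi_1(M_K\times S^1)$, amalgamated along the fiber torus. For the trefoil, the monodromy relations make the fiber classes satisfy $a\mapsto$ words in $a,b$ of the monodromy matrix $\begin{pmatrix}1&1\\-1&0\end{pmatrix}$. Each marked Boyle exterior has $\pi_1\cong\Z$, generated by its meridian, and it kills two marked boundary slopes. Under the gluing, the meridian of $C_T$ is matched with a class that is already expressible in $Y_K$ (a Lagrangian push-off of a rim torus generator), and the two null slopes are matched with the rim generators. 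In each copy this kills one of the generators $x,y$ and a fiber generator. The monodromy relation $\mu(a)=a$, i.e.\ $[\text{circle},a]$-type relations, then forces the remaining fiber generator to die. The two replacements at $R_y$ and $R_d$ are arranged so that everything, including the $S^1$ factors, is killed. I expect this to be the main obstacle: keeping track of basepoints and of the exact peripheral identifications, and checking that the meridian of each $C_T$ is sent to a class already trivial from the other side, so that no new $\Z$ survives.

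\emph{Basis of $H_2$ and the form.} The genus-two surface $\Sigma_2$ (unused section plus fiber, resolved) has square zero and avoids the rim tori, so it survives in $Z_K(B_1,B_2)$. A dual torus is a rim torus of $T_m$, or a torus assembled from a section circle times $S^1$, meeting $\Sigma_2$ once geometrically; it also survives with square zero. Its intersection number $1$ with $\Sigma_2$ shows that the two classes span a unimodular rank-two sublattice. Since $b_2=2$, they form a basis and $Q=\begin{pmatrix}0&1\\1&0\end{pmatrix}=H$. The form is even, so by Freedman's classification the simply connected closed manifold is homeomorphic to $S^2\times S^2$.
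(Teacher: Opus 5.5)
The Euler characteristic and signature steps are fine. Your observation that $H_2(C_T)$ comes from $\partial C_T$, so the form on $C_T$ vanishes, is a valid substitute for Novikov additivity on $S^4=C_T\cup T^2\times D^2$.

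\textbf{The gap is the fundamental group.} It is the heart of the theorem, and you leave it as an expected obstacle, with a mechanism that does not match the gluing of Section~\ref{subsec:S2S2case}. The prescribed identifications are $y\mapsto\alpha_1$, $\rho_y\mapsto\beta_1$, $\mu_y\mapsto\mu_1$ and $d\mapsto\alpha_2$, $\rho_d\mapsto\beta_2$, $\mu_d\mapsto\mu_2$. So one null slope goes to a rim direction and the other to the rim-torus \emph{meridian}. The relations imposed are $y=d=\rho_y=\rho_d=1$, not ``one of $x,y$ and a fiber generator in each copy.'' Indeed $y$ and $d$ are both base generators of the same block $C_F$, and both rim tori sit in a collar of $\partial\nu\Sigma_2$.

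There is also no presentation of $\pi_1(W_K)$ in Section~2, and none is needed:
\begin{itemize}
\item Since $\rho_y,\rho_d$ are sent to null slopes, and $\mu_1,\mu_2$ are identified with the existing classes $\mu_y,\mu_d$ (so they add no generators, whatever those classes are a priori), van Kampen gives $\pi_1(Z)\cong\pi_1(W_K)/\nn y,d,\rho_y,\rho_d\NN$.
\item Filling the meridians recovers $\pi_1(Y_K)$ by Lemma~\ref{lem:standardfill}, so $\pi_1(Z)\cong\pi_1(Y_K)/\nn y,d\NN$.
\item This quotient is trivial by the presentation in Proposition~\ref{prop:YK}. With $d=1$, the relation $dxd^{-1}=xb$ gives $b=1$. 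Then $dbd^{-1}=x^{-1}$ gives $x=1$, the braid relation gives $a=1$, and $y=1$ is imposed directly (Lemma~\ref{lem:normalYK}).
\end{itemize}
The collapse comes from killing the monodromy-direction generator $d$, which the gluing does impose. It does not come from killing a fiber generator, which the gluing does not do.

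\textbf{The dual class also needs repair.} A rim torus of $T_m$ has zero intersection with $\Sigma_2$. The surface $\Sigma_2$ crosses the fiber-sum neck in an annulus $\{p\}\times S^1\times I$ of meridian circles, and a rim torus $\gamma\times S^1\times\{t\}$ with $p\notin\gamma$ misses it. Your alternative, ``a torus assembled from a section circle times $S^1$,'' is left unspecified. The paper instead uses a parallel torus fiber $F'$ in the first copy, which meets the section half of $\Sigma_2$ once.

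Survival of the dual is also not automatic. Any surface meeting $\Sigma_2$ crosses the collar containing $R_y,R_d$ in a normal disk $\{p\}\times D^2$. That disk meets $R_y=(y\times\mu)\times\{t_y\}$ exactly when $p\in y$, and similarly for $R_d$. So you must place the intersection point off $y\cup d$; only then do $\Sigma_2$ and $F'$ survive and give the hyperbolic basis.
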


The proof has two independent parts. The boundary maps reduce the
fundamental group to
\[
\pi_1(Y_K)/\nn d,y\NN,
\]
which is trivial by the presentation recalled from \cite{AP08}. The
surviving genus-two surface and a parallel fiber have intersection matrix
$H$, giving the integral form directly. The argument depends only on the
marked peripheral data; Theorem~\ref{thm:general-replacement} therefore
applies to any two admissibly marked $\mathbb Z$--torus exteriors, including
the Matsumoto--Fukaya fiber exterior.

Two related constructions are included. A single marked exterior glued to
$E_K\times S^1$ gives a smooth homotopy $4$-sphere. For the unknot, with
the geometric marking, this recovers the standard decomposition of $S^4$.
For the rank-six case we do not use the involution in the original
definition of $X_K$. Instead we form the identity double
\[
X_K^{\mathrm{id}}
=
(Y_K\setminus\Int\nu\Sigma_2)
\cup_{\iota}
(Y_K\setminus\Int\nu\Sigma_2),
\]
where $\iota$ is the identity on $\Sigma_2$ and reverses the normal
circle. The two rim tori
\[
R_y=y\times\mu,\qquad R_d=d\times\mu
\]
then survive without any ambiguity. Replacing their neighborhoods by two
admissibly marked exteriors gives a simply connected manifold with
\[
e=8,\qquad \sigma=0.
\]
The geometric basis in Section~\ref{subsec:3S2S2case} survives the
two replacements, so the intersection form is $3H$ and the resulting
manifold is homeomorphic to $\#_3(S^2\times S^2)$.

\section{The symplectic building blocks $M_K\times S^1$, $Y_K$, and $X_K$}\label{sec:YK}

\subsection{From $M_K\times S^1$ to $Y_K$ and $X_K$}
\label{subsec:building-hierarchy}

We follow the construction of \cite[Section~4]{Akh07} exactly. Let $K$ be
a genus-one fibered knot and let $M_K$ be the $3$--manifold obtained by
zero surgery on $K$. Then $M_K\times S^1$ is a torus bundle over a torus.
It contains a symplectic torus fiber $F$ and a symplectic torus section
\[
T_m=m\times S^1,
\]
both of square zero.

The first intermediate manifold is the twisted fiber sum of two copies of
$M_K\times S^1$ in which the \emph{fiber in the first copy} is identified
with the \emph{section in the second copy}:
\[
\boxed{
Y_K=
(M_K\times S^1)_1
\mathop{\#}_{\,F_1=(T_m)_2}
(M_K\times S^1)_2 .
}
\]
This is the notation used in \cite{Akh07},
\[
Y_K=M_K\times S^1\#_{F=T_m}M_K\times S^1.
\]
The gluing is a symplectic normal connected sum in the sense of Gompf and McCarthy--Wolfson \cite{Gompf95,McCarthyWolfson94}, and is written here as a generalized fiber sum in the sense of
\cite[Definition~2.3]{Akh07}, hence is determined by an
orientation-reversing fiber-preserving diffeomorphism of the boundary
three-tori. In the trefoil group calculation of
\cite[Lemma~4.6]{Akh07}, the relevant boundary identifications include
\[
\psi_*(x)=\gamma_1',
\qquad
\psi_*(b)=\gamma_2',
\]
and the remaining boundary circle identifies the knot longitude on one
side with the meridian of the removed torus fiber on the other side.

The two surfaces used to construct the genus-two surface in $Y_K$ are
\emph{not} the two tori used in the fiber sum. Let $T_1$ denote the
section in the first copy of $M_K\times S^1$, and let $T_2$ denote the
fiber in the second copy. These are the complementary section and fiber
left over after the sum. Removing one disk from each and joining their
boundary circles through the fiber-sum neck gives
\[
\Sigma_2=T_1\#T_2\subset Y_K.
\]
$\Sigma_2$ is symplectic and
\[
\Sigma_2^2=0.
\]
The inclusion of its first homology is described by the four classes
\[
(m,x,\gamma_1,\gamma_2).
\]

The second stage takes two copies of this already constructed manifold
$Y_K$. Let
\[
\Sigma_2\subset (Y_K)_1,\qquad
\Sigma_2'\subset (Y_K)_2
\]
be the corresponding genus-two surfaces. The involution
\[
\phi:\Sigma_2'\longrightarrow\Sigma_2
\]
used in \cite{Akh07} acts on first homology by
\[
\boxed{
\phi_*(m')=\gamma_1,\qquad
\phi_*(\gamma_1')=m,\qquad
\phi_*(x')=\gamma_2,\qquad
\phi_*(\gamma_2')=x.
}
\]
The second twisted fiber sum is therefore
\[
\boxed{
X_K=(Y_K)_1\mathop{\#}_{\,\Sigma_2=\Sigma_2',\,\phi}(Y_K)_2 .
}
\]
This is the symplectic manifold denoted $X_K$ in \cite{Akh07}. The later
fundamental-group computation uses the same gluing written in the generators
of the two copies:
\[
y\mapsto e^{-1}f,\qquad
d\mapsto f^{-1}efe^{-1},\qquad
a^{-1}b\mapsto t,\qquad
b^{-1}aba^{-1}\mapsto s;
\]
see also \cite[Lemma~10]{AP08} for the corresponding
fundamental-group formulation.

Thus the construction has two distinct stages:
\[
(M_K\times S^1)_1+(M_K\times S^1)_2
\longrightarrow Y_K,
\qquad
(Y_K)_1+(Y_K)_2
\longrightarrow X_K.
\]
The first sum uses $F_1=(T_m)_2$; the unused $(T_m)_1$ and $F_2$
then combine to form the genus-two surface $\Sigma_2$ used in the second
sum.

\begin{figure}[ht]
\centering
\resizebox{0.94\textwidth}{!}{%
\begin{tikzpicture}[>=Latex,
box/.style={rounded corners=7pt, very thick, minimum width=4.0cm,
      minimum height=1.35cm, align=center},
tag/.style={rounded corners=5pt, very thick, minimum width=4.0cm,
      minimum height=.8cm, align=center},
arr/.style={-{Latex}, very thick},
lab/.style={font=\small,align=center}]
\node[box,draw=mygreen,fill=mygreen!6] (A) at (-5.4,0)
{$\boldsymbol{(M_K\times S^1)_1}$};
\node[box,draw=mygreen,fill=mygreen!6] (B) at (0,0)
{$\boldsymbol{(M_K\times S^1)_2}$};
\node[box,draw=blue!70!black,fill=blue!6] (Y) at (5.4,0)
{$\boldsymbol{Y_K}$};

\node[tag,draw=myred,fill=myred!7] (used) at (-2.7,2.0)
{\textcolor{myred}{$F_1\ \longleftrightarrow\ (T_m)_2$}\\
{\footnotesize fiber of copy 1 \ $\leftrightarrow$ \ section of copy 2}};
\node[font=\small,myred] at (-2.7,2.85) {tori used in the twisted fiber sum};

\node[tag,draw=blue!70!black,fill=blue!6] (T1) at (-5.4,-2.0)
{\textcolor{blue!70!black}{$T_1=(T_m)_1$}\\
{\footnotesize section remaining in copy 1}};
\node[tag,draw=blue!70!black,fill=blue!6] (T2) at (0,-2.0)
{\textcolor{blue!70!black}{$T_2=F_2$}\\
{\footnotesize fiber remaining in copy 2}};
\node[tag,draw=mygreen,fill=mygreen!7] (S) at (5.4,-2.0)
{\textcolor{mygreen}{$\Sigma_2=T_1\#T_2$}\\
{\footnotesize genus two, square zero}};

\draw[very thick,myred] (used.south west) -- ($(A.north)+(0.9,0)$);
\draw[very thick,myred] (used.south east) -- ($(B.north)+(-0.9,0)$);
\draw[arr,blue!70!black] (B.east) -- (Y.west);
\draw[dashed,very thick,blue!70!black] (A.south) -- (T1.north);
\draw[dashed,very thick,blue!70!black] (B.south) -- (T2.north);
\draw[arr,mygreen] (T1.east) -- (T2.west);
\node[font=\small,mygreen] at (-2.7,-2.85)
{join through the fiber-sum neck};
\draw[arr,mygreen] (T2.east) -- (S.west);
\draw[dashed,very thick,mygreen] (S.north) -- (Y.south);
\end{tikzpicture}}
\caption{The first stage of Akhmedov's construction. The fiber $F_1$ in
the first copy is identified with the section $(T_m)_2$ in the second copy.
The section $T_1=(T_m)_1$ remaining in the first copy and the fiber
$T_2=F_2$ remaining in the second copy join through the fiber-sum neck to
form $\Sigma_2=T_1\#T_2\subset Y_K$. Red marks the tori used in the
twisted sum; blue marks the complementary section and fiber; green marks
the resulting genus-two surface.}
\label{fig:YK-correct}
\end{figure}

\begin{figure}[ht]
\centering
\resizebox{0.90\textwidth}{!}{%
\begin{tikzpicture}[>=Latex,
box/.style={rounded corners=7pt, very thick, minimum width=3.8cm,
      minimum height=1.55cm, align=center},
arr/.style={-{Latex},very thick},
lab/.style={font=\small,align=center}]
\node[box,draw=blue!70!black,fill=blue!6] (Y1) at (-5.0,0)
{$\boldsymbol{(Y_K)_1}$\\
\textcolor{mygreen}{$\Sigma_2$}};
\node[box,draw=blue!70!black,fill=blue!6] (Y2) at (0,0)
{$\boldsymbol{(Y_K)_2}$\\
\textcolor{mygreen}{$\Sigma_2'$}};
\node[box,draw=myred,fill=myred!6] (X) at (5.0,0)
{$\boldsymbol{X_K}$\\
{\footnotesize twisted genus-two sum}};

\node[rounded corners=5pt,draw=mygreen,very thick,fill=mygreen!7,
  minimum width=4.6cm,minimum height=.9cm,align=center] (phi) at (-2.5,2.0)
{$\phi:\Sigma_2'\to\Sigma_2$\\
{\footnotesize involution used in the fiber sum}};
\draw[very thick,mygreen] (phi.south west) -- ($(Y1.north)+(0.8,0)$);
\draw[very thick,mygreen] (phi.south east) -- ($(Y2.north)+(-0.8,0)$);
\draw[arr,myred] (Y2.east) -- (X.west);

\node[align=center,font=\small] at (-2.5,-1.65)
{$\phi_*(m')=\gamma_1,\qquad \phi_*(\gamma_1')=m$\\
$\phi_*(x')=\gamma_2,\qquad \phi_*(\gamma_2')=x$};
\end{tikzpicture}}
\caption{The second stage. Two copies of $Y_K$ are fiber summed along
their genus-two surfaces using the involution $\phi$ specified in
\cite{Akh07}.}
\label{fig:XK-correct}
\end{figure}

\subsection{$M_K\times S^1$ from Luttinger surgery}
\label{subsec:MK-luttinger}

The basic piece $M_K\times S^1$ has two descriptions that will be used
interchangeably. Besides the mapping-torus description used in
\cite{Akh07,AP08}, its monodromy can be built by Luttinger surgeries on
$\Sigma_g\times T^2$. The latter is a surgery presentation of the same
symplectic manifold, not a new building block.

Let $K$ be a genus-$g$ fibered knot, with fiber $\Sigma_g^1$ and monodromy
$\varphi\in\operatorname{Mod}(\Sigma_g^1)$. After zero surgery on $K$, the
boundary of the fiber is capped by a disk and the monodromy extends to a
diffeomorphism, again denoted $\varphi$, of the closed surface $\Sigma_g$.
Thus
\[
M_K\cong Y(\varphi)
=
[0,1]\times\Sigma_g/
(1,z)\sim(0,\varphi(z)),
\]
and
\[
M_K\times S^1\cong S^1\times Y(\varphi).
\]
For $g\geq1$ this is symplectic by the usual Thurston construction.

The relation with Luttinger surgery is especially direct. Suppose
\[
\varphi=
\tau_{\gamma_r}^{\varepsilon_r}\cdots
\tau_{\gamma_1}^{\varepsilon_1},
\qquad
\varepsilon_i\in\mathbb Z,
\]
is a Dehn-twist factorization. Start with
\[
X_0=\Sigma_g\times S^1_t\times S^1_s.
\]
Choose distinct points $t_1,\ldots,t_r\in S^1_t$ and consider the product
tori
\[
L_i=\gamma_i\times\{t_i\}\times S^1_s.
\]
After a small product perturbation these are Lagrangian, with the product
framing equal to the Lagrangian framing. A unit Luttinger surgery on $L_i$ in the $\gamma_i$-direction changes
the monodromy of the $\Sigma_g$-bundle by a Dehn twist about $\gamma_i$.
Thus a factor $\tau_{\gamma_i}^{\varepsilon_i}$ is realized by
$|\varepsilon_i|$ unit Luttinger surgeries with the sign determined by
$\varepsilon_i$ (or, equivalently, by the corresponding integral
Luttinger-surgery coefficient). This is the mapping-torus example of
Auroux--Donaldson--Katzarkov \cite{ADK}. Carrying out the surgeries in the
order of the chosen factorization gives
\[
\Sigma_g\times T^2
\ \xrightarrow{\ \text{Luttinger surgeries}\ }\ 
S^1\times Y(\varphi)
\cong M_K\times S^1.
\]
Equivalently, a $1/k$ Dehn surgery on a curve in a fiber of a
three-dimensional mapping torus, crossed with the second circle, is the
corresponding Luttinger surgery on the product Lagrangian torus. The
three-dimensional operation cuts the fiber open and reglues it by the
appropriate power of a Dehn twist; crossing with $S^1$ gives the
four-dimensional statement.

The product $\Sigma_g\times T^2$ is symplectic, the $L_i$ are
Lagrangian, and Luttinger surgery preserves the symplectic category
\cite{ADK}. Thus this presentation also recovers the symplectic structure
on $M_K\times S^1$.
The fiber
\[
\Sigma_g\times\{\mathrm{pt}\}\times\{\mathrm{pt}\}
\]
can be chosen away from the surgery tori and survives as a symplectic
surface, while suitable product tori survive as Lagrangian or symplectic
tori according to their position.

\paragraph{The genus-one case.}

For the genus-one fibered knots used in the small-manifold constructions,
the preceding description begins with
\[
T^2\times T^2=T^4.
\]
Let $a,b$ be oriented simple closed curves on the first $T^2$ with
$a\cdot b=1$. For the right-handed trefoil, one may choose conventions so
that the closed-fiber monodromy is represented by
\[
\varphi=\tau_a\tau_b
\]
(up to reversing the order and simultaneously changing the standard
mapping-class conventions). Thus
\[
M_K\times S^1
\]
is obtained from $T^4$ by two Luttinger surgeries, one on a product
Lagrangian torus parallel to
\[
a\times\{\mathrm{pt}\}\times S^1
\]
and the other on a product Lagrangian torus parallel to
\[
b\times\{\mathrm{pt}\}\times S^1,
\]
with the surgery directions $a$ and $b$, respectively. The two tori are
placed over different points of the mapping-torus circle, so they are
disjoint.

For comparison, the figure-eight knot has genus-one monodromy expressible,
with suitable conventions, as a product of one positive and one negative
Dehn twist. Its product with $S^1$ is therefore obtained from $T^4$ by the
same two-torus procedure with one surgery coefficient of the opposite sign.
The distinction between the two knots is encoded in the signs and order of
the Dehn twists, not in a different four-dimensional mechanism.

Thus the mapping-torus and Luttinger-surgery descriptions give the same
symplectic block $M_K\times S^1$. For general genus $g$, the corresponding
starting product is $\Sigma_g\times T^2$. The constructions of
\cite{AP08,AP10,AO18} use this viewpoint to impose fundamental-group relations
by Luttinger surgery.

The fiber sums
\[
Y_K=(M_K\times S^1)\#_{F=T_m}(M_K\times S^1),
\qquad
X_K=Y_K\#_{\Sigma_2}Y_K
\]
are symplectic sums of these blocks \cite{Gompf95,McCarthyWolfson94,Akh07}.

\subsection{Product-surface Luttinger models and the $K$-dependent blocks}
\label{subsec:product-models}

For a fibered knot $K$, the block $M_K\times S^1$ has the exact
Luttinger-surgery realization described above. Related constructions in
\cite{AO18,AS15} start directly from products of surfaces and impose analogous
fundamental-group relations by Luttinger surgery.

The corresponding building-block hierarchy appears in
\cite[Sections~4--5]{Akh07}.  For a genus-one fibered knot, Section~4
constructs $Y_K$ and $X_K=Y_K\#_{\Sigma_2}Y_K$; Section~5 gives the
higher-genus analogues
\[
W_{K,K'}
=
(M_K\times S^1)\#_{F=T_m}(Z_{K'}\times S^1),
\qquad
V_{K,K'}=W_{K,K'}\#_{\Sigma_{g+1}}W_{K,K'},
\]
where $K'$ is a genus-$g$ fibered knot.  The latter satisfy
\[
Q_{V_{K,K'}}\cong (2g-1)H.
\]
Related product-surface Luttinger-surgery models are given in
\cite{AS15}.

\paragraph{(i) The product $T^2\times T^2$.}
For a genus-$g$ fibered knot $K$, the product $M_K\times S^1$ can be
obtained from $\Sigma_g\times T^2$ by the sequence of Luttinger surgeries
corresponding to a chosen Dehn-twist factorization of the monodromy; see
Subsection~\ref{subsec:MK-luttinger}. The number of surgeries is therefore
the length of the chosen factorization. In the genus-one trefoil and
figure-eight cases used below, two surgeries suffice. The $2g$-surgery
families appearing in \cite{AS15} are specific product-surgery models and
are not asserted here for an arbitrary fibered-knot monodromy. In genus one this begins
with
\[
T^2\times T^2=T^4.
\]
If $a,b$ are the standard generators of the first torus and the monodromy is
written, for example, as
\[
\varphi=\tau_a\tau_b
\]
for the right-handed trefoil, then two product Lagrangian tori,
\[
a\times\{t_a\}\times S^1,\qquad
b\times\{t_b\}\times S^1,
\]
with $t_a\neq t_b$, carry the two Luttinger surgeries. The result is
\[
T^4 \ \xrightarrow{\;2\ \text{Luttinger surgeries}\;}\ M_K\times S^1 .
\]

\paragraph{(ii) The product $T^2\times\Sigma_2$.}
Since $T^2\times\Sigma_2\cong\Sigma_2\times T^2$, one can pass directly to a
higher building block without first writing it as a twisted fiber sum. Let
$a_1,b_1,a_2,b_2$ be the standard generators of $\pi_1(\Sigma_2)$ and
$c,d$ those of $\pi_1(T^2)$. The genus-two specialization of the
$\Sigma_n\times T^2$ construction in \cite{AS15} uses four product
Lagrangian tori:
\[
(a_1'\times c',a_1',-1),\qquad
(b_1'\times c'',b_1',-1),
\]
\[
(a_2'\times c',c',+1/p),\qquad
(a_2''\times d',d',+1/q).
\]
For $p=q=1$ all four are Luttinger surgeries. The resulting symplectic manifold is the genus-two product-surgery
model appearing in \cite{AS15}. It is a useful comparison with the earlier
$Y_K/W_{KK'}$ level, but no identification with a particular $Y_K$ is used
here. Thus the fundamental-group relations used in the twisted-fiber-sum block can also be imposed directly on
\[
T^2\times\Sigma_2
\]
by Luttinger surgery on explicit product tori.

\paragraph{(iii) The product $\Sigma_2\times\Sigma_2$.}
At the next level one can start from $\Sigma_2\times\Sigma_2$ itself. Write
$a_i,b_i$ for the standard generators of the first factor and
$c_j,d_j$ for those of the second. The $n=2$ case of the
$\Sigma_n\times\Sigma_2$ construction in \cite{AS15} consists of eight
Luttinger surgeries:
\[
(a_1'\times c_1',a_1',-1),\qquad
(b_1'\times c_1'',b_1',-1),
\]
\[
(a_2'\times c_2',a_2',-1),\qquad
(b_2'\times c_2'',b_2',-1),
\]
\[
(a_2'\times c_1',c_1',+1/p_1),\qquad
(a_2''\times d_1',d_1',+1/q_1),
\]
\[
(a_1'\times c_2',c_2',+1/p_2),\qquad
(a_1''\times d_2',d_2',+1/q_2).
\]
When
\[
p_1=q_1=p_2=q_2=1,
\]
these are all Luttinger surgeries and the result is symplectic. This is the genus-two product-surgery model of \cite{AS15},
parallel to the earlier $X_K/V_{KK'}$ level of the construction. Again,
the present paper uses it as a comparison model rather than as an
unproved identification with a specified $X_K$.
Moreover, the two product surfaces
\[
\Sigma_2\times\{\mathrm{pt}\},
\qquad
\{\mathrm{pt}\}\times\Sigma_2
\]
may be chosen disjoint from the surgery tori; they survive with square zero
and intersection one.

Thus the hierarchy used in this paper admits the following parallel
description:
\[
\begin{array}{ccl}
T^2\times T^2
&\xrightarrow{\text{Luttinger surgeries}}&
M_K\times S^1,\\[1mm]
T^2\times\Sigma_2
&\xrightarrow{\text{Luttinger surgeries}}&
\text{genus-two product-surgery model},\\[1mm]
\Sigma_2\times\Sigma_2
&\xrightarrow{\text{Luttinger surgeries}}&
\text{rank-eight product-surgery model}.
\end{array}
\]
\begin{figure}[ht]
\centering
\resizebox{0.94\textwidth}{!}{%
\begin{tikzpicture}[>=Latex,
box/.style={rounded corners=7pt,very thick,minimum width=4.3cm,
      minimum height=1.35cm,align=center},
arr/.style={-{Latex},very thick},
lab/.style={font=\small,align=center}]
\node[box,draw=mygreen,fill=mygreen!6] (t4) at (-5.6,2.0)
{$T^2\times T^2$};
\node[box,draw=blue!70!black,fill=blue!6] (mk) at (0,2.0)
{$M_K\times S^1$};
\draw[arr,myred] (t4.east) -- node[above,lab,myred]
{$2$ Luttinger surgeries\\($g=1$)} (mk.west);

\node[box,draw=mygreen,fill=mygreen!6] (ts2) at (-5.6,0)
{$T^2\times\Sigma_2$};
\node[box,draw=blue!70!black,fill=blue!6] (yb) at (0,0)
{product-surgery $Y$-model};
\draw[arr,myred] (ts2.east) -- node[above,lab,myred]
{$4$ Luttinger surgeries} (yb.west);

\node[box,draw=mygreen,fill=mygreen!6] (s2s2) at (-5.6,-2.0)
{$\Sigma_2\times\Sigma_2$};
\node[box,draw=blue!70!black,fill=blue!6] (xb) at (0,-2.0)
{product-surgery $X$-model};
\draw[arr,myred] (s2s2.east) -- node[above,lab,myred]
{$8$ Luttinger surgeries} (xb.west);

\node[box,draw=myred,fill=myred!5,minimum width=4.7cm] (old) at (5.6,0)
{knot surgery / twisted\\fiber-sum presentation};
\draw[<->,very thick,mygreen] (mk.east) -- (old.north west);
\draw[<->,very thick,mygreen] (yb.east) -- (old.west);
\draw[<->,very thick,mygreen] (xb.east) -- (old.south west);
\end{tikzpicture}}
\caption{Parallel product-surgery and knot-surgery/fiber-sum descriptions
of the symplectic building blocks. The arrows on the left indicate the
genus-one numbers of Luttinger surgeries. The right-hand box represents the
older knot-surgery and twisted-fiber-sum viewpoint.}
\label{fig:product-luttinger-models}
\end{figure}

\paragraph{The group of $Y_K$.}
\label{subsec:YK-group}

For the trefoil, the section complement in the first copy is
\[
C_S=(M_K\times S^1)\setminus\Int\nu(b\times x)
\cong (S^3\setminus\Int\nu K)\times S^1,
\]
so
\[
\pi_1(C_S)
=
\langle a,b,x\mid aba=bab,\ [a,x]=[b,x]=1\rangle.
\]
For the second copy, let $C_F$ be the complement of a torus fiber. With
fiber generators $\gamma_1',\gamma_2'$ and base generators $d,y$,
\cite[Lemma~4.5]{Akh07} gives
\[
\pi_1(C_F)=
\langle \gamma_1',\gamma_2',d,y\mid
[\gamma_1',\gamma_2']=[y,\gamma_1']=[y,\gamma_2']=1,
\]
\[
d\gamma_1'd^{-1}=\gamma_1'\gamma_2',
\qquad
d\gamma_2'd^{-1}=(\gamma_1')^{-1}
\rangle .
\]
The twisted fiber sum identifies
\[
x=\gamma_1',\qquad b=\gamma_2',
\]
and the remaining boundary circle gives the longitude--meridian relation.

\begin{proposition}[Akhmedov; Akhmedov--Park]
\label{prop:YK}
The following presentation is \cite[Lemma~4.6]{Akh07}, equivalently
\cite[Lemma~7]{AP08}.
For the trefoil,
\[
\pi_1(Y_K)=
\langle a,b,x,d,y\mid
aba=bab,\ [x,a]=[x,b]=1,
\]
\[
[y,x]=[y,b]=1,\quad
dxd^{-1}=xb,\quad
dbd^{-1}=x^{-1},\quad
ab^2ab^{-4}=[d,y]
\rangle .
\]
\end{proposition}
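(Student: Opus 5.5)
The plan is a Seifert--van Kampen computation for the decomposition $Y_K=C_S\cup_{\psi}C_F$ along the boundary three-torus $T^3=\partial\nu F_1\cong\partial\nu(T_m)_2$. Both $C_S$ and $C_F$ are path connected, and $\pi_1(T^3)\cong\mathbb Z^3$. So $\pi_1(Y_K)$ is the free product $\pi_1(C_S)*\pi_1(C_F)$ with one relation for each of the three generators of the boundary group. I take the two presentations recalled above as given: $\pi_1(C_S)$ from $(S^3\setminus\Int\nu K)\times S^1$, and $\pi_1(C_F)$ from \cite[Lemma~4.5]{Akh07}.

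The first step is to write down explicit based representatives of the three boundary generators on each side. On the $C_S$ side I use $\partial\nu(b\times x)$. Two generators are the torus directions, represented by $x$ and by the push-off of $b$. Because $C_S$ is a product, the third (the meridian of the removed torus) is the knot longitude $\lambda$. For the trefoil presentation $aba=bab$ with meridian $b$, the longitude can be written as $\lambda=ab^2ab^{-4}$; this is the standard Wirtinger computation. On the $C_F$ side the boundary torus directions are $\gamma_1',\gamma_2'$, and the meridian of the fiber is the commutator $[d,y]$ of the base generators. This holds because the punctured base torus has boundary $[d,y]$, and the section $T_m$ makes this boundary loop literally the meridian.

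The second step imposes the gluing data recalled above: $\psi_*(x)=\gamma_1'$, $\psi_*(b)=\gamma_2'$, and the longitude of $K$ matched to the meridian of the removed fiber. This gives the three relations $x=\gamma_1'$, $b=\gamma_2'$ and $ab^2ab^{-4}=[d,y]$. Substituting eliminates $\gamma_1',\gamma_2'$. The relations of $\pi_1(C_F)$ then become $[x,b]=1$ (already implied by $[b,x]=1$ in $C_S$), $[y,x]=[y,b]=1$, $dxd^{-1}=xb$ and $dbd^{-1}=x^{-1}$. Together with $aba=bab$ and $[x,a]=[x,b]=1$ from $C_S$, this is exactly the stated presentation.

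I expect the main obstacle to be the basepoint and orientation bookkeeping in the first step. The push-offs of $x$, $b$, $\gamma_i'$, the longitude and $[d,y]$ must all lie on a single boundary $T^3$ and be joined to a common basepoint by paths. Only then do they commute pairwise and match under $\psi$ with the correct signs, so that the relation reads $ab^2ab^{-4}=[d,y]$ rather than with an inverse or a conjugate. To handle this, I would choose the basepoint on $\partial\nu F_1$, take the section push-off so that the meridian class is an honest commutator in $\pi_1(C_F)$, and then check the orientation convention against \cite[Lemma~4.6]{Akh07} and \cite[Lemma~7]{AP08}. Any remaining conjugation ambiguity is absorbed by renaming generators.
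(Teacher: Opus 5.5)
Your proposal is correct and follows the same route as the paper. The paper gives no separate proof and cites \cite[Lemma~4.6]{Akh07} and \cite[Lemma~7]{AP08}, but the paragraph just before the statement recalls exactly your Seifert--van Kampen setup: $\pi_1(C_S)$ from $(S^3\setminus\Int\nu K)\times S^1$, $\pi_1(C_F)$ from \cite[Lemma~4.5]{Akh07}, the identifications $x=\gamma_1'$, $b=\gamma_2'$, and the longitude--meridian relation $ab^2ab^{-4}=[d,y]$. (That paragraph labels the two copies of $M_K\times S^1$ the opposite way from the boxed definition, whereas your $\partial\nu F_1\cong\partial\nu(T_m)_2$ follows the definition; this does not affect the group.)
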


The genus-two surface $\Sigma_2=T_1\#T_2$ has standard generators whose
images in $\pi_1(Y_K)$ are
\[
a^{-1}b,\qquad b^{-1}aba^{-1},\qquad d,\qquad y,
\]
as in \cite{Akh07}. Its meridian is represented by $[x,b]$ in the
complement calculation used later.

\paragraph{The genus-two surface.}

The surface $\Sigma_2\subset Y_K$ is obtained by joining a punctured fiber in
$C_S$ to a punctured section in $C_F$. As recorded in both \cite{Akh07}
and \cite{AP08}, the standard generators of $\pi_1(\Sigma_2)$ map to
\[
a^{-1}b,\qquad
b^{-1}aba^{-1},\qquad
d,\qquad y.
\]

The complement group is more complicated than $\pi_1(Y_K)$ because the meridian
of $\Sigma_2$ is no longer killed.

\begin{proposition}[Akhmedov--Park]
\label{prop:SigmaComp}
In $Y_K\setminus\Int\nu\Sigma_2$, every meridian of $\Sigma_2$ is conjugate
to the commutator
\[
[x,b].
\]
\end{proposition}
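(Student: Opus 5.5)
The plan is to find one explicit meridian loop of $\Sigma_2$ that is visibly the commutator $[x,b]$, and then deduce the statement for all meridians from the fact that any two meridians of a connected surface are conjugate (up to orientation) in the complement. The natural place to look is the factor $C_S=(S^3\setminus\Int\nu K)\times S^1$, where the punctured torus piece $T_1\setminus D^2$ of $\Sigma_2$ lives. In $C_S$ the surface piece is a copy of a punctured torus, and its boundary circle is where the connected sum with $T_2$ is formed through the neck. By Proposition~\ref{prop:SigmaComp}'s setup, the generators of $\pi_1(\Sigma_2)$ coming from this piece are $a^{-1}b$ and $b^{-1}aba^{-1}$.

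\textbf{Step 1: a meridian as a boundary commutator.} A meridian of $T_1$ inside the closed block $M_K\times S^1$ bounds the normal disk. After removing $\nu\Sigma_2$, I would instead use the standard trick of pushing a meridian of a torus $T=\alpha\times\beta$ onto the boundary of a punctured dual torus. Concretely, choose the torus $T'\subset\partial(\nu T_1)$-neighborhood spanned by curves dual to $T_1$. Here these are the circles $x$ (the $S^1$-factor normal to the section direction) and $b$ (a curve meeting the section). This gives a punctured torus whose boundary is isotopic, in the complement, to a meridian of $T_1$. This follows the Gompf/Fintushel--Stern computation: in a torus $\times D^2$ neighborhood, $\mu_T=[\tilde\alpha,\tilde\beta]$, where the two curves are pushoffs. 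Reading off the pushoffs gives $\mu=[x,b]$ in $\pi_1(C_S)$, and hence in $\pi_1(Y_K\setminus\Int\nu\Sigma_2)$, after the identification $x=\gamma_1'$, $b=\gamma_2'$.

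\textbf{Step 2: the neck.} The meridian of $\Sigma_2=T_1\#T_2$ near a point of $T_1$ is a meridian of the whole connected surface. One must check that basing it via a path in the complement of the neck does not change the word beyond conjugation. This is automatic, since changing the base path conjugates.

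\textbf{Step 3: all meridians conjugate.} $\Sigma_2$ is connected and the normal circle bundle restricted to it is connected. Any two meridian fibers are therefore freely homotopic in $\partial\nu\Sigma_2$, hence conjugate in $\pi_1$ of the complement.

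The main obstacle is Step 1: identifying precisely which pushoffs of the dual curves avoid $\Sigma_2$, and verifying that the commutator of those pushoffs equals $[x,b]$ rather than a conjugate of a twisted variant. This is complicated by $b$ not commuting with $a$, and by the monodromy relation $dbd^{-1}=x^{-1}$ on the $C_F$ side. I would first attempt the check in the product region $C_S$, where $x$ is central. There the relevant torus $b\times x$ is exactly the removed fiber-sum torus, whose normal data is explicit, so the computation reduces to the standard lemma for $T^2\times D^2$ in a product. That computation is the one recorded in \cite{AP08}, which I would cite for the final identification.
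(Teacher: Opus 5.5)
Your approach is essentially the paper's, which cites \cite[Lemma~10]{AP08} and represents a meridian as the boundary of a punctured parallel copy of the section torus $b\times x$ meeting $\Sigma_2$ once; all meridians are then conjugate because the normal circle bundle over the connected surface $\Sigma_2$ is connected. One correction: read $\mu=[x,b]$ in $\pi_1$ of the complement of $\Sigma_2$, where the punctured dual torus actually lies and $x,b$ no longer commute, not in $\pi_1(C_S)$, where $x$ is central and both sides are trivial; likewise, the standard lemma takes the commutator of the dual torus's generators, not of pushoffs of curves on $T$ itself (those commute in $T^2\times(D^2\setminus 0)$).
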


\begin{proof}
This is the meridian statement used in the proof of
\cite[Lemma~10]{AP08}. There the complement is written as
\[
Y_K\setminus\Int\nu\Sigma_2
=
(C_S\setminus\Int\nu F_0)\cup
(C_F\setminus\Int\nu S_0),
\]
where $\Sigma_2$ is the union of a punctured fiber $F_0\subset C_S$ and a
punctured section $S_0\subset C_F$. The proof identifies every meridian with a
conjugate of $[x,b]$, represented by the boundary of a parallel punctured torus
section.
\end{proof}

For the replacement theorem below, however, we use only the following elementary quotient consequence.

\paragraph{Normal generation by $d$ and $y$.}

\begin{lemma}\label{lem:normalYK}
The elements $d$ and $y$ normally generate $\pi_1(Y_K)$:
\[
\pi_1(Y_K)/\nn d,y\NN=1.
\]
\end{lemma}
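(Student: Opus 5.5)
We work directly with the presentation of Proposition~\ref{prop:YK} and compute the quotient by adding the relations $d=1$ and $y=1$. No geometry is needed beyond this presentation. In the quotient $G=\pi_1(Y_K)/\nn d,y\NN$ the relations involving $y$ become trivial, and the relation $ab^2ab^{-4}=[d,y]$ becomes
\[
ab^2ab^{-4}=1 .
\]
The two conjugation relations become $x=xb$ and $b=x^{-1}$.

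The steps, in order, are as follows.

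First, from $dxd^{-1}=xb$ with $d=1$ we get $x=xb$, hence $b=1$.

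Second, from $dbd^{-1}=x^{-1}$ with $d=1$ we get $b=x^{-1}$, hence $x=b^{-1}=1$.

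Third, with $b=1$, the trefoil relation $aba=bab$ reads $a^2=a$, so $a=1$. As a consistency check, the relation $ab^2ab^{-4}=1$ then gives $a^2=1$, which agrees.

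Thus all five generators $a,b,x,d,y$ are trivial in $G$, and $G=1$. The commutator relations $[x,a]=[x,b]=1$ and $[y,x]=[y,b]=1$ impose nothing further.

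The only real point to watch is that the relations are used exactly as recorded in Proposition~\ref{prop:YK}. In particular, the identification $dxd^{-1}=xb$ is what kills $b$ immediately; without it we would have to fall back on $ab^2ab^{-4}=1$ together with the braid relation. If one is worried about conventions, for example if the relation were $dxd^{-1}=x b^{\pm1}$ or written with the inverse conjugation, the argument is unchanged, since any such form still forces $b=1$ once $d=1$.

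I would therefore present the proof as this three-line elimination and note explicitly which relation kills each generator. There is no genuine obstacle; the step requiring care is simply transcribing the relations correctly from \cite[Lemma~4.6]{Akh07}.
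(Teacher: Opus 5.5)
Your proof is correct and follows the same route as the paper's: impose $d=y=1$ in the presentation of Proposition~\ref{prop:YK}, so that $dxd^{-1}=xb$ forces $b=1$, then $dbd^{-1}=x^{-1}$ forces $x=1$, and the braid relation $aba=bab$ forces $a=1$. Your check against $ab^2ab^{-4}=[d,y]$ is consistent but not needed.
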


\begin{proof}
Take the presentation in Proposition~\ref{prop:YK} and impose $d=y=1$.

The relation
\[
dxd^{-1}=xb
\]
becomes
\[
x=xb,
\]
hence
\[
b=1.
\]
Next,
\[
dbd^{-1}=x^{-1}
\]
becomes
\[
1=x^{-1},
\]
so
\[
x=1.
\]
Finally the braid relation
\[
aba=bab
\]
with $b=1$ gives
\[
a=1.
\]
Thus every generator of $\pi_1(Y_K)$ is trivial in the quotient.
\end{proof}

The normal-generation statement is extracted from the presentation in
\cite[Lemma~7]{AP08}; it is not stated there in this form.

\section{Marked $\mathbb Z$--torus exteriors and rim tori}
\label{sec:marked-exteriors}

The replacement argument uses only the peripheral data recorded in the
following definition.

We call a torus exterior $C$ a $\mathbb{Z}$-torus exterior if $\pi_1(C)\cong\mathbb{Z}$ and the meridian of the torus generates this group.

\begin{definition}\label{def:admissible}
Let $T\subset S^4$ be an oriented embedded torus and put
\[
C_T=S^4\setminus\Int\nu T.
\]
An \emph{admissible marking} of $C_T$ is a primitive basis
\[
(\mu_T,\alpha_T,\beta_T)
\]
of $H_1(\partial C_T;\mathbb Z)$ for which
\[
\pi_1(C_T)\cong\mathbb Z\langle\mu_T\rangle,
\qquad
\alpha_T=\beta_T=1\quad\text{in }\pi_1(C_T).
\]
We call $(C_T;\mu_T,\alpha_T,\beta_T)$ an
\emph{admissibly marked $\mathbb Z$--torus exterior}.
\end{definition}

The definition includes more than the abstract condition
$\pi_1(C_T)\cong\mathbb Z$: the two null slopes are part of the data. This
marking is what enters the gluing maps below.

Every automorphism of $H_1(T^3;\mathbb Z)\cong\mathbb Z^3$ is induced by a
diffeomorphism of $T^3$. Thus, whenever we prescribe the image of
one ordered primitive boundary basis in another, the assignment is realized
by a boundary diffeomorphism; by reversing one basis element if necessary,
it may be chosen orientation reversing. Since $\pi_1(T^3)$ is abelian, the
same matrix also determines the induced map on the boundary fundamental
group.

\paragraph{Boyle exteriors.}

Let
\[
B=T_{J,i,1}\subset S^4
\]
be one of Boyle's turned $1$-twist-spun tori and put
\[
C_B=S^4\setminus\Int\nu B.
\]
Boyle's complement-group calculation gives
\[
\pi_1(C_B)\cong\Z.
\]
For a recent discussion of these $j=1$ tori, including the deduction of the
cyclic group from Boyle's formula, see \cite{JP24}, Section 3.

Alexander duality gives
\[
H_1(C_B;\Z)\cong\Z,\qquad
H_2(C_B;\Z)\cong\Z^2,
\]
and therefore
\[
e(C_B)=2.
\]

Choose a boundary basis
\[
(\mu_B,\alpha_B,\beta_B)
\]
for
\[
H_1(\partial C_B)\cong\Z^3
\]
with $\mu_B$ a meridian whose image generates $\pi_1(C_B)$ and with
$\alpha_B,\beta_B$ spanning the kernel of
\[
H_1(\partial C_B)\longrightarrow H_1(C_B).
\]
Because $\pi_1(C_B)\cong\Z$ is abelian, the Hurewicz map
\[
\pi_1(C_B)\longrightarrow H_1(C_B)
\]
is an isomorphism. Consequently
\[
\alpha_B=\beta_B=1
\]
already in $\pi_1(C_B)$.

\begin{lemma}\label{lem:BoyleInv}
For a Boyle $j=1$ exterior,
\[
e(C_B)=2,\qquad \sigma(C_B)=0.
\]
\end{lemma}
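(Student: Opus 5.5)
The Euler characteristic and the signature will be handled separately; both are formal consequences of $C_B$ being the complement of a torus in $S^4$.

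\emph{Euler characteristic.} Decompose
\[
S^4=C_B\cup_{T^3}\nu B ,
\]
with $\nu B\cong T^2\times D^2$. Since $e(T^3)=0$ and $e(T^2\times D^2)=e(T^2)=0$, additivity of Euler characteristic gives
\[
2=e(S^4)=e(C_B)+e(\nu B)-e(T^3)=e(C_B).
\]
As a cross-check, the ranks recorded above give the same value. Alexander duality gives $H_k(C_B)\cong H^{4-k-1}(B)$ for $k\ge1$, so $H_1=\Z$ and $H_2=\Z^2$. The remaining groups are $H_3(C_B)\cong \tilde H^0(B)=0$ and $H_4(C_B)=0$, since $C_B$ has nonempty boundary. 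Hence $e(C_B)=1-1+2=2$.

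\emph{Signature.} I will show that the intersection pairing on $H_2(C_B;\mathbb{Q})$ vanishes identically, so $\sigma(C_B)=0$. The plan is to show that every class in $H_2(C_B)$ comes from the boundary. By the Alexander duality isomorphism, $H_2(C_B)\cong H^1(B)\cong\Z^2$, and the generators are the rim tori $\gamma\times\mu_B\subset\partial C_B$, where $\gamma$ runs over a basis of $H_1(B)$. Concretely, the Mayer--Vietoris sequence for $S^4=C_B\cup\nu B$,
\[
H_2(T^3)\to H_2(C_B)\oplus H_2(\nu B)\to H_2(S^4)=0,
\]
shows that $H_2(T^3)\to H_2(C_B)\oplus H_2(T^2)$ is surjective. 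Therefore $H_2(\partial C_B)\to H_2(C_B)$ is surjective. Classes carried by $\partial C_B$ can be pushed off the boundary to become disjoint, so they have zero intersection with every class. Thus $Q_{C_B}\equiv0$, and $\sigma(C_B)=0$.

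An alternative argument for the signature uses Novikov additivity:
\[
0=\sigma(S^4)=\sigma(C_B)+\sigma(T^2\times D^2),
\]
and $\sigma(T^2\times D^2)=0$ because $H_2(T^2\times D^2)$ is generated by the square-zero core torus.

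The only step requiring care is the surjectivity of $H_2(\partial C_B)\to H_2(C_B)$, and the Mayer--Vietoris sequence settles it because $H_2(S^4)=0$. Neither argument uses the hypothesis $j=1$ or the cyclic fundamental group, so the same values hold for any torus exterior in $S^4$.
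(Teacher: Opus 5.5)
Your proof is correct. Your fallback signature argument, Novikov additivity over $S^4=C_B\cup(T^2\times D^2)$, is exactly the paper's proof, and your Alexander-duality cross-check for $e$ is the paper's Euler-characteristic argument.

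Your primary arguments take a different route. The paper reads $e(C_B)=2$ off the Betti numbers from Alexander duality ($b_1=1$, $b_2=2$, with $b_3=b_4=0$ left implicit). Your inclusion--exclusion over $S^4=C_B\cup_{T^3}\nu B$ needs no homology of $C_B$ at all. For the signature, you use the Mayer--Vietoris surjectivity of $H_2(\partial C_B)\to H_2(C_B)$ in place of Novikov additivity. This proves something stronger: the intersection form of $C_B$ vanishes identically, i.e.\ $H_2(C_B)$ is entirely peripheral, spanned by the rim tori.

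The paper's route is shorter because it leans on a standard additivity theorem. Its Alexander-duality computation also records $H_1(C_B)\cong\Z$, which the paper needs separately for the Hurewicz argument that the marked slopes are null. Your route is more self-contained and says more about where $H_2(C_B)$ lives. As you note, neither argument uses $j=1$; the paper likewise uses these two facts for an arbitrary torus exterior in $S^4$ in the proof of Theorem~\ref{thm:general-replacement}.
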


\begin{proof}
The Euler characteristic follows from Alexander duality. For the signature, use
\[
S^4=C_B\cup_{\partial}(T^2\times D^2).
\]
The torus neighborhood has zero signature, so Novikov additivity gives
\[
0=\sigma(S^4)=\sigma(C_B).
\]
\end{proof}

\paragraph{The two rim tori.}

Let
\[
\mu=\mu_{\Sigma_2}
\]
denote the circle factor of the boundary
\[
\partial\nu\Sigma_2=\Sigma_2\times S^1_\mu.
\]
By Proposition~\ref{prop:SigmaComp}, the class of $\mu$ in
$\pi_1(Y_K\setminus\nu\Sigma_2)$ is represented, up to conjugacy and orientation,
by $[x,b]$.

Choose embedded representatives of the curves $y,d\subset\Sigma_2$.
These curves meet transversely once on $\Sigma_2$. Their product rim tori
\[
y\times\mu,\qquad d\times\mu
\]
therefore intersect if placed at the same boundary level. To avoid this, choose a
collar
\[
\partial\nu\Sigma_2\times[0,\varepsilon)
\subset
Y_K\setminus\Int\nu\Sigma_2
\]
and choose two distinct numbers $0<t_y<t_d<\varepsilon$. Define
\[
R_y=(y\times\mu)\times\{t_y\},\qquad
R_d=(d\times\mu)\times\{t_d\}.
\]

\begin{lemma}\label{lem:RimGeom}
The tori $R_y$ and $R_d$ are disjoint smoothly embedded tori of self-intersection zero.
\end{lemma}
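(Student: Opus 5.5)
The argument works entirely inside the collar $\partial\nu\Sigma_2\times[0,\varepsilon)\cong \Sigma_2\times S^1_\mu\times[0,\varepsilon)$. It rests on two facts. The first is that each rim torus is a product of an embedded curve with a circle fiber, placed at a fixed collar level. The second is that the two tori sit at different levels $t_y\neq t_d$.

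\emph{Embedding.} Since $y\subset\Sigma_2$ is an embedded circle, the map $y\times S^1_\mu\to\Sigma_2\times S^1_\mu$ is a smooth embedding of a torus. Composing with the level inclusion at $t_y$ gives $R_y$, and likewise for $R_d$ at level $t_d$. Both lie in the interior of $Y_K\setminus\Int\nu\Sigma_2$, since $0<t_y<t_d<\varepsilon$.

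\emph{Disjointness.} The coordinate $t\in[0,\varepsilon)$ is a smooth function on the collar. It is constant, equal to $t_y$, on $R_y$, and equal to $t_d$ on $R_d$. Because $t_y\neq t_d$, we get $R_y\cap R_d=\emptyset$. This holds even though $y$ and $d$ meet once on $\Sigma_2$.

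\emph{Self-intersection zero.} For $R_y$, we produce an explicit disjoint push-off. Choose a nonvanishing normal vector field to $y$ in $\Sigma_2$; one exists because $\Sigma_2$ is orientable and so $y$ is two-sided. Push $y$ to a disjoint parallel copy $y'$ and form $(y'\times\mu)\times\{t_y\}$. Alternatively, move the level to $t_y'\neq t_y$. Either way the new torus is isotopic to $R_y$ and disjoint from it, so $R_y\cdot R_y=0$. The same argument applies to $R_d$. Equivalently, the normal bundle of $R_y$ is spanned by two everywhere-independent sections: the $\Sigma_2$-normal of $y$ and $\partial_t$. It is therefore trivial, so the Euler number is $0$.

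There is no serious obstacle. The one point to state carefully is that the collar parametrization exists with product structure $\Sigma_2\times S^1_\mu\times[0,\varepsilon)$. This follows because $\Sigma_2^2=0$, so $\nu\Sigma_2\cong\Sigma_2\times D^2$ and $\partial\nu\Sigma_2=\Sigma_2\times S^1_\mu$, as used in the definition of $\mu$. With that in place, all three claims are statements about product submanifolds.
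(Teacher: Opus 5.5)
Your proposal is correct and follows essentially the same argument as the paper: each rim torus is the product of an embedded circle with the $\mu$-circle at a fixed collar level, so it is embedded with a trivial product normal framing, and the tori are disjoint because $t_y\neq t_d$. Your explicit trivialization of the normal bundle by the $\Sigma_2$-normal of $y$ together with $\partial_t$, and the push-off argument, simply spell out what the paper calls a ``product normal framing.''
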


\begin{proof}
Each is a product of two circles contained in a boundary collar, so each is embedded and
has a product normal framing. Since the two tori occur at different collar parameters,
they are disjoint even though $y$ and $d$ intersect on $\Sigma_2$.
\end{proof}

Let
\[
\rho_y,\rho_d
\]
be the meridians of $R_y,R_d$, respectively, and define
\[
W_K=
Y_K\setminus
\Int(\nu R_y\cup\nu R_d).
\]

On $\partial\nu R_y$ use the ordered basis
\[
(y,\mu_y,\rho_y),
\]
where $\mu_y$ denotes the push-off of the $\mu$-factor at the level $t_y$.
Likewise on $\partial\nu R_d$ use
\[
(d,\mu_d,\rho_d).
\]

We will use the following filling quotient instead of a presentation of $\pi_1(W_K)$.

\begin{lemma}[]\label{lem:standardfill}
There is a natural isomorphism
\[
\pi_1(W_K)/\nn \rho_y,\rho_d\NN
\cong
\pi_1(Y_K).
\]
Under this quotient, the classes $y,d$ map to the elements denoted $y,d$ in
Proposition~\ref{prop:YK}.
\end{lemma}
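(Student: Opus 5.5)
The plan is to read the claim as a Seifert--van Kampen statement about refilling the two removed neighborhoods. We have
\[
Y_K = W_K\cup(\nu R_y\cup\nu R_d),\qquad \nu R_\ast\cong T^2\times D^2,
\]
where the two neighborhoods are disjoint by Lemma~\ref{lem:RimGeom}. So $Y_K$ is obtained from $W_K$ by gluing two copies of $T^2\times D^2$ along the boundary $3$-tori $\partial\nu R_y$ and $\partial\nu R_d$.

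I will glue one neighborhood at a time. Each is connected and the pieces meet along a connected $T^3$, so van Kampen applies. Since $\pi_1(T^2\times D^2)\cong\pi_1(T^2)$, and the map $\pi_1(\partial\nu R_y)\to\pi_1(\nu R_y)$ is the surjection $\mathbb Z^3\to\mathbb Z^2$ with kernel generated by the meridian $\rho_y$, the amalgamated product collapses to
\[
\pi_1(W_K\cup\nu R_y)\cong\pi_1(W_K)/\nn\rho_y\NN .
\]
The key point is that $\pi_1(\nu R_y)$ is generated by classes already coming from the boundary torus, so the neighborhood adds no new generators. The boundary relations then only say that the image of $\rho_y$ dies.

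Repeating this with $\nu R_d$ gives
\[
\pi_1(Y_K)\cong\pi_1(W_K)/\nn\rho_y,\rho_d\NN .
\]
Naturality is automatic because the isomorphism is induced by the inclusion $W_K\hookrightarrow Y_K$. Base points are handled by choosing paths in $W_K$ to each boundary torus. Changing these paths only conjugates $\rho_y$ and $\rho_d$, which leaves the normal closure unchanged.

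For the second sentence, note that the curves $y$ and $d$ used in the boundary bases $(y,\mu_y,\rho_y)$ and $(d,\mu_d,\rho_d)$ are push-offs of the curves $y,d\subset\Sigma_2$ into the collar levels $t_y$ and $t_d$. In $Y_K$, each push-off is homotopic to the original curve on $\Sigma_2$ by a homotopy running along the collar and across $\nu\Sigma_2$. By the recalled description of $\pi_1(\Sigma_2)\to\pi_1(Y_K)$, these curves represent the generators $y,d$ of Proposition~\ref{prop:YK}.

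The main obstacle is keeping base points consistent. The classes $y$ and $d$ of Proposition~\ref{prop:YK} are defined relative to a particular base point and particular arcs. I would take the base point on $\Sigma_2$, or on its collar, and connect it to the push-offs by radial arcs in the collar. With that choice the identification holds on the nose, not merely up to conjugacy.
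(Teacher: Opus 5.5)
Your proposal is correct and is essentially the paper's proof: refill $\nu R_y$ and $\nu R_d$ by copies of $T^2\times D^2$ and apply Seifert--van Kampen, so that each filling kills exactly the meridian $\rho$. The longitude directions $y,d$ are carried to the original curves on $\Sigma_2$, hence to the generators $y,d$ of Proposition~\ref{prop:YK}. Your gluing one neighborhood at a time, your use of the surjectivity of $\pi_1(T^3)\to\pi_1(T^2\times D^2)$, and your choice of radial base paths only make explicit what the paper leaves implicit.
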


\begin{proof}
Attach two copies of $T^2\times D^2$ using the original product framings of the deleted
tori. This restores $Y_K$. For each torus attachment, Seifert--van Kampen imposes
exactly the relation that its meridian $\rho$ bounds the $D^2$-factor; the two longitude
directions are identified with the original torus directions. Performing both fillings gives
the displayed quotient.
\end{proof}

\paragraph{Iwase torus fibrations and the Matsumoto--Fukaya regular fiber.}
\label{subsec:iwase}

A second source of admissibly marked exteriors comes from the torus-fibration and
torus-surgery constructions of Iwase. In particular, Iwase studied good torus
fibrations with twin singular fibers and developed Dehn surgery along embedded
$T^2$-knots in $S^4$ \cite{Iwase84,Iwase88}. Larson later gave a systematic
treatment of torus surgery in $S^4$, including handle calculus descriptions of
surgeries on the standard unknotted torus \cite{Larson18}.

The model used here is Matsumoto's genus-one achiral Lefschetz fibration
\[
f_{\mathrm{MF}}:S^4\longrightarrow S^2,
\]
usually called the Matsumoto--Fukaya fibration. It has two critical points
of opposite signs and regular fiber $T^2$; see
\cite{Matsumoto85,DKZ17}. In the standard description of
\cite[Section~2]{DKZ17}, the two vanishing cycles are isotopic, so the
local monodromies are inverse Dehn twists about a primitive curve $c$.

Let
\[
F\subset S^4
\]
be a regular torus fiber and put
\[
C_{\mathrm{MF}}=S^4\setminus\Int\nu F.
\]
Alexander duality gives, independently of the particular torus embedding,
\[
H_1(C_{\mathrm{MF}};\mathbb Z)\cong\mathbb Z,
\qquad
H_2(C_{\mathrm{MF}};\mathbb Z)\cong\mathbb Z^2,
\qquad
e(C_{\mathrm{MF}})=2.
\]
These homological facts agree with the corresponding invariants of a Boyle
exterior. What is not automatic is the fundamental group and the marked
peripheral map
\[
\pi_1(\partial C_{\mathrm{MF}})\longrightarrow \pi_1(C_{\mathrm{MF}}).
\]

\begin{proposition}\label{prop:MF-complement}
Let $F$ be a regular fiber of the Matsumoto--Fukaya fibration
$f_{\mathrm{MF}}:S^4\to S^2$, and put
\[
C_{\mathrm{MF}}=S^4\setminus\Int\nu F.
\]
Then
\[
\pi_1(C_{\mathrm{MF}})\cong\mathbb Z.
\]
Moreover, if $c,\ell$ is a basis of $\pi_1(F)$ with $c$ the common
vanishing cycle and $\mu_F$ is the positively oriented meridian of $F$, then
$c$ is trivial in $\pi_1(C_{\mathrm{MF}})$ and the image of $\ell$ is a
primitive generator. Alexander duality identifies $[\mu_F]$ as a primitive
generator of $H_1(C_{\mathrm{MF}};\mathbb Z)$. Since
$\pi_1(C_{\mathrm{MF}})$ is infinite cyclic, the Hurewicz map is an
isomorphism; hence, after orienting $\ell$ suitably,
\[
\mu_F=\ell^{\varepsilon}\quad\text{in }\pi_1(C_{\mathrm{MF}}),
\qquad \varepsilon\in\{1,-1\}.
\]
Consequently
\[
\alpha_F=c,\qquad
\beta_F=\mu_F\ell^{-\varepsilon}
\]
are trivial in $\pi_1(C_{\mathrm{MF}})$, and
\[
(\mu_F,\alpha_F,\beta_F)
\]
is a primitive basis of $H_1(\partial C_{\mathrm{MF}};\mathbb Z)$.
Hence this basis gives an admissible marking of $C_{\mathrm{MF}}$ in the
sense of Definition~\ref{def:admissible}.
\end{proposition}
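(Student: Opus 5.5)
The plan is to compute $\pi_1(C_{\mathrm{MF}})$ from the fibration structure, by removing the two singular fibers and the chosen regular fiber. Write $S^2=D_1\cup A\cup D_2$, where $D_i$ is a small disk around the $i$-th critical value and $F=f_{\mathrm{MF}}^{-1}(p)$ with $p\in A$. Then
\[
C_{\mathrm{MF}}=f^{-1}(S^2\setminus \Int D_p),
\]
where $D_p$ is a small disk around $p$. The base $S^2\setminus\Int D_p$ is a disk containing both critical values. So $C_{\mathrm{MF}}$ is a genus-one achiral Lefschetz fibration over $D^2$ with two singular fibers.

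\textbf{Step 1 (handle description and $\pi_1$).} Start from $T^2\times D^2$ and attach one $2$-handle along each vanishing cycle, one with framing $-1$ and one with framing $+1$ relative to the fiber framing. Both vanishing cycles are the curve $c$, by the description in \cite[Section~2]{DKZ17}. Van Kampen then gives
\[
\pi_1(C_{\mathrm{MF}})\cong \pi_1(T^2)/\nn c\NN\cong \mathbb Z\langle \ell\rangle.
\]
This is the standard fact that $\pi_1$ of a Lefschetz fibration over a disk is $\pi_1(\text{fiber})$ modulo the vanishing cycles; achirality does not affect it. It gives both $c=1$ and the claim that $\ell$ is a primitive generator.

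\textbf{Step 2 (the meridian).} The boundary is $\partial C_{\mathrm{MF}}=f^{-1}(\partial D_p)\cong T^2\times S^1$. Here $\mu_F$ is a lift of $\partial D_p$, and the total monodromy around $\partial D_p$ is $\tau_c\tau_c^{-1}=\mathrm{id}$, so the bundle is trivial and the lift makes sense. Alexander duality gives $H_1(C_{\mathrm{MF}})\cong\mathbb Z$ generated by $[\mu_F]$. Since $\pi_1\cong\mathbb Z$ is abelian, Hurewicz is an isomorphism. Hence $\mu_F=\ell^{\varepsilon}$ with $\varepsilon=\pm1$, because both are generators of $\mathbb Z$.

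\textbf{Step 3 (the marking).} It follows at once that $\alpha_F=c$ and $\beta_F=\mu_F\ell^{-\varepsilon}$ are trivial. The change of basis from $(\mu_F,c,\ell)$ to $(\mu_F,c,\mu_F\ell^{-\varepsilon})$ is unimodular, since its determinant is $\pm1$. Because $(\mu_F,c,\ell)$ is a basis of $H_1(T^3)$ (taking $c,\ell$ as a basis of $H_1(F)$), the new triple is a primitive basis. Definition~\ref{def:admissible} is then satisfied.

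\textbf{Main obstacle.} The delicate point is Step 1's identification of the exterior of a regular fiber with the Lefschetz fibration over the complementary disk. This requires a lift $\mu_F$ of $\partial D_p$ and a product structure on $\partial\nu F$ compatible with the fibration. Both follow because $\nu F\cong F\times D_p$ is a trivial fibered neighborhood. One should also check that $\mu_F$ in this product framing agrees with the geometric meridian; any two choices differ by fiber classes, which doesn't affect the Hurewicz argument.
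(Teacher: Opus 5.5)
Your proposal is correct and follows the paper's proof essentially step for step: Lefschetz $2$-handles of either sign kill the common vanishing cycle $c$, Alexander duality plus Hurewicz give $\mu_F=\ell^{\pm1}$, and a determinant check gives the primitive basis. One correction to your closing remark: the choice of lift of $\partial D_p$ does matter, since changing it by $\ell^{k}$ changes its image to $\ell^{\varepsilon+k}$. No check is needed, however, because $\{x\}\times\partial D_p$ bounds the normal disk $\{x\}\times D_p$ and is therefore exactly the geometric meridian.
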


\begin{proof}
Choose a small disk $D_0\subset S^2$ containing no critical values and with
$f_{\mathrm{MF}}^{-1}(D_0)=\nu F$. Its complementary disk
\[
D=\overline{S^2\setminus D_0}
\]
contains the two critical values. Hence $C_{\mathrm{MF}}$ is the total space
of a genus-one achiral Lefschetz fibration over $D$ with two critical points.
In the standard Matsumoto--Fukaya model the two vanishing cycles are
isotopic to the same primitive curve $c\subset T^2$, while the critical
points have opposite signs; see \cite{Matsumoto85} and
\cite[Section~2]{DKZ17}. For a Lefschetz fibration over a disk, attaching a Lefschetz
$2$--handle kills its vanishing cycle in the fundamental group; the sign of
the critical point changes the handle framing but not this relation.
Consequently
\[
\pi_1(C_{\mathrm{MF}})
\cong
\pi_1(T^2)/\nn c\NN
\cong
\langle c,\ell\mid[c,\ell]=1,\ c=1\rangle
\cong\mathbb Z\langle\ell\rangle.
\]
We next identify the geometric meridian. For every oriented embedded
torus in $S^4$, Alexander duality gives
\[
H_1(S^4\setminus\Int\nu F;\mathbb Z)\cong\mathbb Z,
\]
generated by a positively oriented meridian $\mu_F$. Since the fundamental
group just computed is already infinite cyclic, the Hurewicz map is an
isomorphism. Thus $\mu_F$ is a generator of
$\pi_1(C_{\mathrm{MF}})$ as well. The element $\ell$ is also a generator. Here $\mu_F$ is the normal
circle in
\[
\partial\nu F=F\times S^1,
\]
hence the geometric meridian used in Alexander duality. Thus, after
choosing the orientation of $\ell$,
\[
\mu_F=\ell^\varepsilon,\qquad \varepsilon=\pm1.
\]
It follows that
\[
c=1,\qquad \mu_F\ell^{-\varepsilon}=1
\]
in the complement group.

Finally, relative to the geometric boundary basis $(c,\ell,\mu_F)$, the
three classes
\[
\mu_F,\qquad c,\qquad \mu_F\ell^{-\varepsilon}
\]
form an integral basis of $H_1(T^3)$: the corresponding change-of-basis
matrix has determinant $\pm1$. This proves the peripheral statement.
\end{proof}

In the ordered boundary basis $(c,\ell,\mu_F)$, the inclusion
\[
i_*:H_1(\partial C_{\mathrm{MF}};\mathbb Z)\longrightarrow
H_1(C_{\mathrm{MF}};\mathbb Z)\cong\mathbb Z\langle u\rangle
\]
is therefore
\[
i_*(c)=0,\qquad i_*(\ell)=u,\qquad
i_*(\mu_F)=\varepsilon u.
\]
Equivalently, its matrix is the row vector
\[
\begin{pmatrix}0&1&\varepsilon\end{pmatrix}.
\]
Thus
\[
\ker i_*=
\mathbb Z\langle c,\mu_F-\varepsilon\ell\rangle,
\]
which is a primitive rank-two direct summand. This is the marked
peripheral information used in the replacement theorem; it is stronger than
the abstract statement $\pi_1(C_{\mathrm{MF}})\cong\mathbb Z$.

\begin{remark}
The theorem determines the topological type only. Since
$C_{\mathrm{MF}}$ comes with an explicit achiral Lefschetz-fibration
handlebody, it gives an explicit test case for the marked handle calculus problem
of determining the smooth type.
\end{remark}

\section{Three small-manifold cases}
\label{sec:threecases}

\begin{figure}[ht]
\centering
\begin{tikzpicture}[>=Latex,
case/.style={rounded corners=7pt,very thick,minimum width=3.75cm,
minimum height=2.15cm,text width=3.45cm,align=center,inner sep=5pt},
arr/.style={-{Latex},thick}]
\node[case,draw=myred,fill=myred!6] (s4) at (-4.65,0)
{\textbf{Case I}\\[1mm]
$(E_K\times S^1)\cup C_T$\\
$e=2,\ \sigma=0,\ \pi_1=1$\\
\textcolor{myred}{homotopy $S^4$}};

\node[case,draw=blue!70!black,fill=blue!6] (s2) at (0,0)
{\textbf{Case II}\\[1mm]
$Y_K+C_1+C_2$\\
$e=4,\ \sigma=0,\ Q=H$\\
\textcolor{blue!70!black}{$S^2\times S^2$ topologically}};

\node[case,draw=mygreen,fill=mygreen!7] (s3) at (4.65,0)
{\textbf{Case III}\\[1mm]
$X_K^{\mathrm{id}}+C_1+C_2$\\
$e=8,\ \sigma=0,\ b_2=6$\\
\textcolor{mygreen}{$3(S^2\times S^2)$ topologically}};

\draw[arr,myred] (s4.south) -- ++(0,-0.55)
node[below,font=\small] {one exterior};
\draw[arr,blue!70!black] (s2.south) -- ++(0,-0.55)
node[below,font=\small] {two exteriors};
\draw[arr,mygreen] (s3.south) -- ++(0,-0.55)
node[below,font=\small] {two exteriors};
\end{tikzpicture}
\caption{The three topological targets.  Case III has intersection form $3H$.}
\label{fig:three-cases}
\end{figure}

The applications are grouped by topological target. Boyle exteriors are treated first; the Matsumoto--Fukaya fiber exterior is then substituted using Proposition~\ref{prop:MF-complement}.

\subsection{Case I: homotopy $4$-spheres}
\label{subsec:S4case}

Let $K\subset S^3$ be a knot, let
\[
E_K=S^3\setminus\Int\nu K,
\]
and write
\[
\partial(E_K\times S^1)
=
\partial E_K\times S^1\cong T^3.
\]
We use the ordered peripheral basis
\[
(m_K,\lambda_K,x),
\]
where $m_K$ and $\lambda_K$ are respectively a meridian and the preferred
longitude of $K$, and $x$ is the extra $S^1$-factor. Thus
\[
\pi_1(E_K\times S^1)
\cong \pi_1(E_K)\times\langle x\rangle,
\]
and the knot group $\pi_1(E_K)$ is normally generated by $m_K$.

\paragraph{Boyle exterior.}

Let $(C_B;\mu_B,\alpha_B,\beta_B)$ be an admissibly marked Boyle exterior.
Choose an orientation-reversing boundary diffeomorphism
\[
\Phi_B:\partial(E_K\times S^1)\longrightarrow\partial C_B
\]
whose induced map, up to inverses required by orientation conventions, is
\[
\boxed{
m_K\longmapsto\alpha_B,\qquad
x\longmapsto\beta_B,\qquad
\lambda_K\longmapsto\mu_B.
}
\]
Define
\[
\Sigma_B(K)
=
(E_K\times S^1)\cup_{\Phi_B} C_B.
\]

\begin{theorem}\label{thm:S4}
For every knot $K$,
\[
\pi_1(\Sigma_B(K))=1,\qquad
e(\Sigma_B(K))=2,\qquad
\sigma(\Sigma_B(K))=0.
\]
Consequently $\Sigma_B(K)$ is a smooth homotopy $4$-sphere.
\end{theorem}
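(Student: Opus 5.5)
The proof splits into three independent computations: the fundamental group via Seifert--van Kampen, the Euler characteristic by additivity, and the signature by Novikov additivity. It then concludes with Freedman's theorem.

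\emph{Fundamental group.} The boundary $T^3$ is connected and $\pi_1(T^3)\cong\Z^3$ is abelian, so Seifert--van Kampen gives
\[
\pi_1(\Sigma_B(K))\cong\bigl(\pi_1(E_K)\times\langle x\rangle\bigr)*_{\Z^3}\pi_1(C_B).
\]
Since $\pi_1(C_B)\cong\Z\langle\mu_B\rangle$ and $\alpha_B=\beta_B=1$ there, the amalgam is the quotient of $\bigl(\pi_1(E_K)\times\langle x\rangle\bigr)*\langle\mu_B\rangle$ by three relations: $m_K=1$, $x=1$, and $\lambda_K=\mu_B$. Inverses coming from orientation conventions do not affect any of these relations. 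The first two relations kill $x$ and, because $m_K$ normally generates the knot group, all of $\pi_1(E_K)$. In particular $\lambda_K=1$. The third relation then forces $\mu_B=1$, and the group is trivial. This is the same pattern as Lemma~\ref{lem:normalYK}: the two null directions of the marking are matched with normal generators of the other side.

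\emph{Euler characteristic.} The gluing is along $T^3$, and $e(T^3)=0$, so
\[
e(\Sigma_B(K))=e(E_K\times S^1)+e(C_B)-0=0+2=2.
\]
Here $e(E_K\times S^1)=e(E_K)\,e(S^1)=0$, and $e(C_B)=2$ is Lemma~\ref{lem:BoyleInv}. For the signature, Novikov additivity gives
\[
\sigma(\Sigma_B(K))=\sigma(E_K\times S^1)+\sigma(C_B).
\]
The term $\sigma(C_B)=0$ is Lemma~\ref{lem:BoyleInv}. The term $\sigma(E_K\times S^1)$ vanishes because the manifold is a product with a circle: it admits an orientation-reversing self-diffeomorphism that is the identity on $\partial E_K$ and reflects the $S^1$ factor (equivalently, $H_2$ is carried by products of classes, and these have zero pairing). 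Both signatures are as compact manifolds with boundary. Alternatively, once $b_2=0$ is established below, $\sigma=0$ follows automatically.

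\emph{Conclusion.} A closed, simply connected $4$-manifold has $b_1=b_3=0$, so $e=2$ forces $b_2=0$. Hence $\Sigma_B(K)$ is a homology sphere. A simply connected homology $4$-sphere is a homotopy $4$-sphere by Hurewicz and Whitehead. It is smooth because it is glued from smooth pieces along a diffeomorphism, so it is a smooth homotopy $4$-sphere; Freedman's theorem further identifies it with $S^4$ topologically.

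\emph{Main obstacle.} The step needing care is the van Kampen bookkeeping. One must check that the prescribed map on $H_1$ really is induced by an orientation-reversing diffeomorphism; the remarks after Definition~\ref{def:admissible} already give this. One must also check that basepoint and conjugacy issues do not matter. They do not: $\pi_1(T^3)$ is abelian, and we only use normal closures. The rest is routine.
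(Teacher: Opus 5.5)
Your proposal is correct and follows the paper's proof essentially step for step. The steps are: van Kampen with the relations $m_K=1$, $x=1$, $\lambda_K=\mu_B$; additivity of $e$ across $T^3$; Novikov additivity; then Hurewicz/Whitehead. The differences are cosmetic. You justify $\sigma(E_K\times S^1)=0$ by an orientation-reversing self-diffeomorphism, or by $b_2=0$, where the paper notes the intersection pairing is trivial. You also use the Hurewicz map $S^4\to\Sigma_B(K)$ rather than the paper's degree-one collapse map $\Sigma_B(K)\to S^4$.
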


\begin{proof}
Set $G_K=\pi_1(E_K)$. Since
\[
\pi_1(C_B)=\langle\mu_B\rangle\cong\mathbb Z,
\qquad
\alpha_B=\beta_B=1
\]
in the Boyle exterior, Seifert--van Kampen gives
\[
\pi_1(\Sigma_B(K))
\cong
\frac{(G_K\times\langle x\rangle)*\langle\mu_B\rangle}
{\nn m_K,\ x,\ \lambda_K\mu_B^{-1}\NN}.
\]
The relation $m_K=1$ kills $G_K$ because a knot group is normally generated
by a meridian. Hence the preferred longitude $\lambda_K$ also becomes
trivial. The relation $\lambda_K=\mu_B$ then kills the generator of the
Boyle exterior, and $x=1$ kills the remaining product factor. Therefore
\[
\pi_1(\Sigma_B(K))=1.
\]

Since $\chi(E_K)=0$ and $\chi(S^1)=0$,
\[
e(E_K\times S^1)=0.
\]
The common boundary is $T^3$, which has Euler characteristic zero, while
$e(C_B)=2$. Hence
\[
e(\Sigma_B(K))=2.
\]
Both pieces have signature zero: $E_K\times S^1$ has trivial intersection
pairing and $\sigma(C_B)=0$ by Lemma~\ref{lem:BoyleInv}. Novikov
additivity therefore gives $\sigma(\Sigma_B(K))=0$.

A closed simply connected $4$-manifold with Euler characteristic two has
$H_2=0$; Poincar\'e duality then shows that it has the integral homology of
$S^4$. Since both spaces are simply connected, a degree-one map
$\Sigma_B(K)\to S^4$ is a homology equivalence and hence, by Whitehead's
theorem, a homotopy equivalence. Thus $\Sigma_B(K)$ is a smooth homotopy
$4$-sphere.
\end{proof}

\begin{corollary}\label{cor:unknot-standard}
For the unknot $U$, if $\Phi_B$ is the geometric marking coming from the
original decomposition
\[
S^4=C_B\cup\nu B,
\]
then
\[
\Sigma_B(U)\cong S^4.
\]
\end{corollary}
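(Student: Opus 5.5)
We need to prove Corollary~\ref{cor:unknot-standard}: for the unknot $U$, with the geometric marking, $\Sigma_B(U)\cong S^4$.

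The plan is to identify $E_U\times S^1$ with the torus neighbourhood $\nu B\cong T^2\times D^2$ that was removed from $S^4$, in such a way that $\Phi_B$ becomes the original gluing map. For the unknot, $E_U=S^3\setminus\Int\nu U\cong S^1\times D^2$. Here the meridian $m_K$ of $U$ is the core circle direction, and the preferred longitude $\lambda_K$ bounds the meridian disk $\{\mathrm{pt}\}\times D^2$ of the solid torus. Hence
\[
E_U\times S^1\cong (S^1\times S^1)\times D^2=T^2\times D^2 .
\]
In this product, $m_K$ and $x$ span the $T^2$ factor, and $\lambda_K$ is the boundary of the $D^2$ factor. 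So $\lambda_K$ plays the role of the meridian of the core torus $T^2\times\{0\}$, while $(m_K,x)$ are the two longitudinal directions.

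Next we interpret the phrase ``geometric marking coming from $S^4=C_B\cup\nu B$''. On $\partial\nu B=B\times S^1$, the marking sends the meridian $\mu_B$ to the $\partial D^2$ direction and $\alpha_B,\beta_B$ to the two longitudes $B\times\{\mathrm{pt}\}$ determined by the original framing. I take this as the meaning of the hypothesis, that is, that $(\alpha_B,\beta_B)$ is the framing-induced longitude pair of $B$ rather than merely the null slopes. Under the identification above, the prescribed map $m_K\mapsto\alpha_B$, $x\mapsto\beta_B$, $\lambda_K\mapsto\mu_B$ agrees on $H_1(T^3)$ with the boundary identification $\partial(T^2\times D^2)\to\partial C_B$ used in the decomposition of $S^4$. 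The orientation signs are chosen consistently so that both maps are orientation reversing.

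The key step is upgrading agreement on homology to agreement up to isotopy. Diffeomorphisms of $T^3$ are determined up to isotopy by their action on $H_1$; this is the theorem of Waldhausen, which also covers the mapping-class group of $T^3$ being $GL_3(\Z)$. Therefore $\Phi_B$ is isotopic to the original gluing map, and gluing along isotopic maps produces diffeomorphic manifolds. This gives $\Sigma_B(U)\cong C_B\cup_{\partial}\nu B=S^4$.

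The main obstacle is the marking step. One must check that the longitudinal pair $(\alpha_B,\beta_B)$ in the geometric marking really is the product-framing longitude pair of $\nu B$, and not some other basis of the kernel of $H_1(\partial C_B)\to H_1(C_B)$ that differs by a shear involving $\mu_B$. That kernel does contain the framing longitudes, since $\pi_1(C_B)\cong\Z$ is generated by $\mu_B$ and Alexander duality kills the longitudes homologically. But the kernel only determines $\alpha_B,\beta_B$ up to $GL_2(\Z)$ among themselves. Any such change can be absorbed by a self-diffeomorphism of $T^2\times D^2$ acting on the $T^2$ factor, so it does not affect the result.

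A shear of the form $\lambda_K\mapsto\mu_B+a\alpha_B+b\beta_B$ would correspond to a torus surgery. Such a shear is excluded precisely by the hypothesis that $\Phi_B$ is the geometric marking. So I will state explicitly that ``geometric'' means $\Phi_B$ extends over the original $\nu B$, and then the argument above goes through.
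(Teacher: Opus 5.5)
Your proposal is correct and is essentially the paper's argument: $E_U\times S^1\cong T^2\times D^2$ with $\lambda_U$ bounding the $D^2$ factor, so the geometric marking (with $\lambda_U\mapsto\mu_B$ the true meridian of $B$) makes $\Phi_B$ the original attachment of $\nu B$ to $C_B$. Your appeal to Waldhausen's theorem, that diffeomorphisms of $T^3$ are determined up to isotopy by their action on $H_1$, makes explicit the paper's one-line claim that the gluing is ``precisely the original attachment.'' As you essentially note, only the image of $\lambda_U$ matters, since both $GL_2(\Z)$ changes of the longitudes and framing twists adding multiples of $\mu_B$ to them extend over $T^2\times D^2$.
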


\begin{proof}
For the unknot,
\[
E_U\cong S^1\times D^2,
\qquad
E_U\times S^1\cong T^2\times D^2.
\]
Under the geometric peripheral identification, the two core directions are
sent to the two null slopes of $C_B$ and the meridian of the standard
$T^2\times D^2$ is sent to $\mu_B$. Thus the gluing is precisely the
original attachment of $\nu B$ to its exterior.
\end{proof}

\paragraph{Matsumoto--Fukaya exterior.}

The same construction works with the regular-fiber exterior
$C_{\mathrm{MF}}$. Using its admissible marking
$(\mu_F,\alpha_F,\beta_F)$ from Proposition~\ref{prop:MF-complement}, choose
\[
m_K\mapsto\alpha_F,\qquad
x\mapsto\beta_F,\qquad
\lambda_K\mapsto\mu_F,
\]
and put
\[
\Sigma_{\mathrm{MF}}(K)
=
(E_K\times S^1)\cup C_{\mathrm{MF}}.
\]

\begin{corollary}\label{cor:MFsphere}
For every knot $K$, $\Sigma_{\mathrm{MF}}(K)$ is a smooth homotopy $4$-sphere.
\end{corollary}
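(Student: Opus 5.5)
The plan is to repeat the proof of Theorem~\ref{thm:S4} verbatim, with the admissibly marked exterior $(C_{\mathrm{MF}};\mu_F,\alpha_F,\beta_F)$ of Proposition~\ref{prop:MF-complement} in place of $(C_B;\mu_B,\alpha_B,\beta_B)$. That proof used only three properties of $C_B$: $\pi_1(C_B)\cong\mathbb Z$ generated by the marked meridian, the two null slopes, and $e(C_B)=2$, $\sigma(C_B)=0$. So the task is just to check these for $C_{\mathrm{MF}}$.

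First I realize the prescribed assignment $m_K\mapsto\alpha_F$, $x\mapsto\beta_F$, $\lambda_K\mapsto\mu_F$ by an orientation-reversing diffeomorphism of $T^3$. This is possible because $(m_K,\lambda_K,x)$ and $(\mu_F,\alpha_F,\beta_F)$ are both primitive bases of $H_1(T^3)$; the basis property for the Matsumoto--Fukaya triple is part of Proposition~\ref{prop:MF-complement}. The remark after Definition~\ref{def:admissible} then gives the diffeomorphism, after inverting one basis element if needed to reverse orientation. Seifert--van Kampen gives
\[
\pi_1(\Sigma_{\mathrm{MF}}(K))\cong\frac{(G_K\times\langle x\rangle)*\langle\mu_F\rangle}{\nn m_K,\ x,\ \lambda_K\mu_F^{-1}\NN},
\]
since $\alpha_F=\beta_F=1$ and $\pi_1(C_{\mathrm{MF}})=\langle\mu_F\rangle$. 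The argument then runs as before:
\begin{itemize}
\item $m_K=1$ kills $G_K$, because a knot group is normally generated by a meridian; in particular $\lambda_K=1$.
\item The relation $\lambda_K=\mu_F$ then kills $\mu_F$.
\item $x=1$ kills the remaining circle factor.
\end{itemize}
Hence $\pi_1(\Sigma_{\mathrm{MF}}(K))=1$.

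For the numerical invariants, Alexander duality gives $e(C_{\mathrm{MF}})=2$ for any torus exterior, as already recorded. Splitting $S^4=C_{\mathrm{MF}}\cup T^2\times D^2$ and applying Novikov additivity gives $\sigma(C_{\mathrm{MF}})=0$, exactly as in Lemma~\ref{lem:BoyleInv}. Because $e(E_K\times S^1)=0$ and $e(T^3)=0$, we get $e(\Sigma_{\mathrm{MF}}(K))=2$. Since $E_K\times S^1$ has trivial intersection pairing, Novikov additivity gives $\sigma=0$.

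Finally, a closed simply connected $4$-manifold with $e=2$ has the homology of $S^4$. A degree-one map to $S^4$ is then a homology equivalence, and by Whitehead's theorem a homotopy equivalence. The only substantive input is that $\mu_F$ itself, and not merely $\ell$, generates $\pi_1(C_{\mathrm{MF}})$, with $c$ and $\mu_F\ell^{-\varepsilon}$ null. This is exactly the peripheral content of Proposition~\ref{prop:MF-complement}, so no new obstacle is expected.
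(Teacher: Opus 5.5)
Your proposal is correct and takes the same route as the paper, whose proof simply notes that the argument of Theorem~\ref{thm:S4} needs only the admissible marking, which Proposition~\ref{prop:MF-complement} supplies for $C_{\mathrm{MF}}$. Your write-up spells this out, and also checks $e(C_{\mathrm{MF}})=2$ and $\sigma(C_{\mathrm{MF}})=0$ (true for any torus exterior in $S^4$), which the proof of Theorem~\ref{thm:S4} uses but the paper's one-line proof leaves implicit.
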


\begin{proof}
The proof of Theorem~\ref{thm:S4} uses only the defining properties of an
admissibly marked $\mathbb Z$--torus exterior. Proposition~\ref{prop:MF-complement}
shows that $C_{\mathrm{MF}}$ has exactly these properties.
\end{proof}

For nontrivial $K$ we make no claim that either homotopy sphere is smoothly
standard. In the Matsumoto--Fukaya case the explicit achiral Lefschetz
handlebody makes this a concrete marked handle calculus problem.

\subsection{Case II: manifolds homeomorphic to $S^2\times S^2$}
\label{subsec:S2S2case}

We use the trefoil block $Y_K$ and the two disjoint rim tori
$R_y,R_d$ from Section~\ref{sec:marked-exteriors}. Recall that on their
boundaries we use the ordered bases
\[
(y,\mu_y,\rho_y),\qquad (d,\mu_d,\rho_d),
\]
where $\rho_y,\rho_d$ are the rim-torus meridians and $\mu_y,\mu_d$ are
push-offs of the meridian of $\Sigma_2$.

\paragraph{Two Boyle exteriors.}

Let
\[
(C_{B_i};\mu_i,\alpha_i,\beta_i),\qquad i=1,2,
\]
be two marked Boyle exteriors. Choose orientation-reversing boundary
diffeomorphisms so that, up to harmless inverses,
\[
\boxed{
y\mapsto\alpha_1,\qquad
\rho_y\mapsto\beta_1,\qquad
\mu_y\mapsto\mu_1,
}
\]
and
\[
\boxed{
d\mapsto\alpha_2,\qquad
\rho_d\mapsto\beta_2,\qquad
\mu_d\mapsto\mu_2.
}
\]
Define
\[
Z_K(B_1,B_2)
=
W_K\cup C_{B_1}\cup C_{B_2}.
\]

\begin{theorem}\label{thm:BoyleS2S2}
For the trefoil block $Y_K$,
\[
\pi_1\bigl(Z_K(B_1,B_2)\bigr)=1,\qquad
e\bigl(Z_K(B_1,B_2)\bigr)=4,\qquad
\sigma\bigl(Z_K(B_1,B_2)\bigr)=0.
\]
The surviving surfaces $\Sigma_2$ and $F'$ satisfy
\[
\Sigma_2^2=(F')^2=0,\qquad \Sigma_2\cdot F'=1.
\]
Consequently
\[
Q_{Z_K(B_1,B_2)}\cong H
\]
and
\[
Z_K(B_1,B_2)\approx S^2\times S^2.
\]
\end{theorem}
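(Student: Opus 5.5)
The proof has three parts: the fundamental group via Seifert--van Kampen, the Euler characteristic and signature by additivity, and the intersection form from two explicit surfaces, followed by Freedman's classification.

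\textbf{Fundamental group.} Write $Z=Z_K(B_1,B_2)=W_K\cup C_{B_1}\cup C_{B_2}$. Each Boyle exterior has $\pi_1(C_{B_i})=\langle\mu_i\rangle\cong\Z$ with $\alpha_i=\beta_i=1$. Hence Seifert--van Kampen gives
\[
\pi_1(Z)\cong\frac{\pi_1(W_K)*\langle\mu_1\rangle*\langle\mu_2\rangle}{\nn y,\ \rho_y,\ \mu_y\mu_1^{-1},\ d,\ \rho_d,\ \mu_d\mu_2^{-1}\NN}.
\]
The relations $\mu_y=\mu_1$ and $\mu_d=\mu_2$ let us eliminate the generators $\mu_1,\mu_2$ completely. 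What remains is $\pi_1(W_K)/\nn\rho_y,\rho_d,y,d\NN$. By Lemma~\ref{lem:standardfill} this is $\pi_1(Y_K)/\nn y,d\NN$, which is trivial by Lemma~\ref{lem:normalYK}.

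One point needs care. The curves $y,d$ on the rim-torus boundaries are push-offs into the collar, so their classes in $\pi_1(W_K)$ are only defined up to basepoint paths. Normal closure absorbs conjugation, but we must also confirm that after filling they map to the elements $y,d$ of Proposition~\ref{prop:YK}. This is the content of the second sentence of Lemma~\ref{lem:standardfill}.

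\textbf{Euler characteristic and signature.} Removing two $T^2\times D^2$ neighborhoods does not change $e$, since tori and $T^3$ have $e=0$, so $e(W_K)=e(Y_K)$. For the trefoil, $e(Y_K)=e(M_K\times S^1)+e(M_K\times S^1)-2e(T^2)=0$. Each $C_{B_i}$ contributes $2$ and the gluing boundaries $T^3$ contribute nothing, so $e(Z)=4$. For the signature, $\sigma(Y_K)=0$ because it is a sum of signature-zero pieces along square-zero tori. Novikov additivity, applied first to recover $\sigma(W_K)=0$ and then with Lemma~\ref{lem:BoyleInv}, gives $\sigma(Z)=0$. Since $Z$ is simply connected, $b_2=2$.

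\textbf{Intersection form.} Take $\Sigma_2$, pushed slightly off, and a parallel torus $F'$. For $F'$ I would use a parallel copy of the fiber $T_2=F_2$ in the second copy, or of the section $T_1$, chosen so that it meets $\Sigma_2$ exactly once, transversely, in the $T_1$ or $T_2$ part. Both surfaces must avoid the rim tori $R_y,R_d$. These tori lie in the collar of $\partial\nu\Sigma_2$ at levels $t_y,t_d>0$. Take $\Sigma_2$ itself as the core, so it is disjoint from them. For $F'$, arrange that it crosses $\partial\nu\Sigma_2$ only near its single intersection point, away from the curves $y$ and $d$; this is possible since those are fixed curves on $\Sigma_2$. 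Both surfaces then survive in $Z$ with $\Sigma_2^2=(F')^2=0$ and $\Sigma_2\cdot F'=1$.

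The Gram matrix is $\begin{pmatrix}0&1\\1&0\end{pmatrix}$ with determinant $-1$. Since the form on $H_2(Z;\Z)\cong\Z^2$ is unimodular, a sublattice of full rank with unimodular Gram matrix equals the whole lattice. Hence $Q_Z\cong H$, which is even. Freedman's classification (simply connected, even form $H$, so Kirby--Siebenmann vanishes) gives $Z\approx S^2\times S^2$.

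\textbf{Main obstacle.} I expect the hardest step to be the geometric one: checking that $F'$ really meets $\Sigma_2$ once and can be kept disjoint from the collar rim tori, so that it survives the replacements. The fundamental-group and characteristic-number steps are formal consequences of the earlier lemmas.
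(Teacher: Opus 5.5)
Your outline is the paper's proof. You reduce $\pi_1$ by Seifert--van Kampen to $\pi_1(Y_K)/\nn y,d\NN$ via Lemma~\ref{lem:standardfill} and Lemma~\ref{lem:normalYK}, use additivity for $e$ and $\sigma$, take a hyperbolic pair $\Sigma_2,F'$ kept out of the collar, and finish with the full-rank unimodular sublattice argument and Freedman. The fundamental-group and characteristic-number steps are fine.

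\textbf{The gap: your choice of $F'$.} In $Y_K=(M_K\times S^1)_1\#_{F_1=(T_m)_2}(M_K\times S^1)_2$, both halves of $\Sigma_2$ are punctured by the summed tori. The section $T_1$ meets the removed fiber $F_1$ once, and the fiber $T_2=F_2$ meets the removed section $(T_m)_2$ once.
\begin{itemize}
\item A parallel copy of $T_2$ (another fiber of the second copy) or of $T_1$ (another section of the first copy) is punctured in exactly the same way. So it is not a closed torus in $Y_K$.
\item It is also disjoint from the corresponding half of $\Sigma_2$, since distinct parallel fibers (or sections) are disjoint.
\item Closing it up through the neck only produces a push-off of $\Sigma_2$. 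Its intersection with $\Sigma_2$ is $\Sigma_2^2=0$.
\end{itemize}
So the surface you propose cannot meet $\Sigma_2$ once. Your lattice argument then has no dual class for $\Sigma_2$.

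\textbf{The fix.} The dual torus must be parallel to one of the tori \emph{used} in the sum, not to a piece of $\Sigma_2$. The paper takes $F'$ to be a fiber of the first copy, parallel to and disjoint from the removed fiber $F_1$.
\begin{itemize}
\item It misses $\nu F_1$, so it survives as a closed square-zero torus in $Y_K$.
\item It meets the section $T_1$ exactly once and misses the second-copy half of $\Sigma_2$, so $\Sigma_2\cdot F'=1$.
\end{itemize}
Symmetrically, a section of the second copy parallel to the removed $(T_m)_2$ would meet $T_2$ once. With this $F'$, the rest goes through exactly as in the paper: you place the intersection point off $y\cup d$ so that $F'$ misses $\nu R_y\cup\nu R_d$, and the determinant argument follows.
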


\begin{proof}
In each Boyle exterior, $\alpha_i$ and $\beta_i$ are trivial and $\mu_i$
generates the cyclic fundamental group. Hence the two boundary gluings
impose
\[
y=\rho_y=1,\qquad d=\rho_d=1
\]
in the amalgamated product, while $\mu_y$ and $\mu_d$ are identified with
the two cyclic generators contributed by the Boyle pieces. After imposing
$\rho_y=\rho_d=1$, Lemma~\ref{lem:standardfill} identifies the central
quotient with $\pi_1(Y_K)$. The remaining relations $y=d=1$ therefore give
\[
\pi_1(Y_K)/\nn y,d\NN,
\]
which is trivial by Lemma~\ref{lem:normalYK}. Once the image of the central
group is trivial, the two generators $\mu_1,\mu_2$, being identified with
$\mu_y,\mu_d$, are trivial as well. Thus
\[
\pi_1\bigl(Z_K(B_1,B_2)\bigr)=1.
\]

The original $Y_K$ has
\[
e(Y_K)=0,\qquad \sigma(Y_K)=0
\]
by \cite{Akh07}. Removing two copies of $T^2\times D^2$ changes neither
invariant. Each Boyle exterior has Euler characteristic two and signature
zero by Lemma~\ref{lem:BoyleInv}, so
\[
e\bigl(Z_K(B_1,B_2)\bigr)=4,\qquad
\sigma\bigl(Z_K(B_1,B_2)\bigr)=0.
\]

The rim tori lie in a collar of $\partial\nu\Sigma_2$, hence are disjoint
from the core $\Sigma_2$. Choose the parallel fiber $F'$ so that its unique
intersection point with $\Sigma_2$ is away from the curves $y\cup d$.
Then $F'$ is also disjoint from the two rim-torus neighborhoods. Therefore
both surfaces survive the replacements and retain
\[
\Sigma_2^2=(F')^2=0,\qquad \Sigma_2\cdot F'=1.
\]
Simple connectivity and $e=4$ imply
\[
H_2(Z_K;\mathbb Z)\cong\mathbb Z^2.
\]
The two displayed classes have intersection determinant $-1$, so they form
an integral basis and the intersection form is $H$. Freedman's
classification \cite{Freedman} gives the homeomorphism with $S^2\times S^2$.
\end{proof}

\paragraph{Two Matsumoto--Fukaya exteriors.}

Replace $C_{B_1},C_{B_2}$ by two copies of $C_{\mathrm{MF}}$, using the same
boundary assignments with the admissible marking from
Proposition~\ref{prop:MF-complement}. Denote the result by
$Z_K^{\mathrm{MF}}$.

\begin{corollary}\label{cor:MFS2S2}
For the trefoil block,
\[
\pi_1(Z_K^{\mathrm{MF}})=1,\qquad
Q_{Z_K^{\mathrm{MF}}}\cong H,
\]
and
\[
Z_K^{\mathrm{MF}}\approx S^2\times S^2.
\]
\end{corollary}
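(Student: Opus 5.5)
The plan is to run the proof of Theorem~\ref{thm:BoyleS2S2} again with $C_{\mathrm{MF}}$ in place of $C_{B_i}$. At each step we check that only properties shared by the two kinds of exterior are used. Those properties are the admissible marking of Definition~\ref{def:admissible}, $e=2$, and $\sigma=0$. Proposition~\ref{prop:MF-complement} supplies the marking $(\mu_F,\alpha_F,\beta_F)$: $\pi_1(C_{\mathrm{MF}})$ is infinite cyclic generated by $\mu_F$, and $\alpha_F=c$ and $\beta_F=\mu_F\ell^{-\varepsilon}$ are null. Since these classes form a primitive basis of $H_1(T^3)$, the boundary assignments
\[
y\mapsto\alpha_F,\quad \rho_y\mapsto\beta_F,\quad \mu_y\mapsto\mu_F,
\qquad
d\mapsto\alpha_F,\quad \rho_d\mapsto\beta_F,\quad \mu_d\mapsto\mu_F
\]
are realized by orientation-reversing diffeomorphisms of $T^3$. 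This follows from the remark after Definition~\ref{def:admissible}.

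For the fundamental group, Seifert--van Kampen imposes $y=\rho_y=d=\rho_d=1$ on $\pi_1(W_K)$. It also identifies the two cyclic generators with $\mu_y$ and $\mu_d$. Lemma~\ref{lem:standardfill} turns the quotient by $\rho_y,\rho_d$ into $\pi_1(Y_K)$, with $y$ and $d$ mapping to the generators of that name. Lemma~\ref{lem:normalYK} then kills $\pi_1(Y_K)$ entirely. The images of $\mu_y$ and $\mu_d$ therefore die, and with them the generators of the two copies of $C_{\mathrm{MF}}$. Hence $\pi_1(Z_K^{\mathrm{MF}})=1$.

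For the numerical invariants we need $e(C_{\mathrm{MF}})=2$, which is stated above via Alexander duality. We also need $\sigma(C_{\mathrm{MF}})=0$, which follows from Novikov additivity on $S^4=C_{\mathrm{MF}}\cup\nu F$ exactly as in Lemma~\ref{lem:BoyleInv}. Combined with $e(Y_K)=\sigma(Y_K)=0$ and the fact that removing the two torus neighborhoods changes neither invariant, this gives $e(Z_K^{\mathrm{MF}})=4$ and $\sigma(Z_K^{\mathrm{MF}})=0$. Simple connectivity then gives $H_2(Z_K^{\mathrm{MF}};\Z)\cong\Z^2$.

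For the intersection form, $\Sigma_2$ and the parallel fiber $F'$ are disjoint from $\nu R_y\cup\nu R_d$, as in the proof of Theorem~\ref{thm:BoyleS2S2}, so they survive unchanged. They have squares $0$ and meet once. Their Gram matrix therefore has determinant $-1$, so they form a basis of $H_2$ and $Q\cong H$. Freedman's classification then gives $Z_K^{\mathrm{MF}}\approx S^2\times S^2$.

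The only step that needs care is confirming that Theorem~\ref{thm:BoyleS2S2} nowhere used Boyle-specific information. One point to check is that its proof never needed $\mu_i$ to be a meridian, only a generator of $\pi_1$. The other is that $\sigma(C_{\mathrm{MF}})=0$, which the paper has not stated explicitly but which follows from the Novikov argument above.
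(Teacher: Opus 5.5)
Your proposal is correct and follows the paper's argument. The paper's proof of this corollary just notes that the proof of Theorem~\ref{thm:BoyleS2S2} uses only the admissible marking (supplied for $C_{\mathrm{MF}}$ by Proposition~\ref{prop:MF-complement}) and the facts $e(C_T)=2$, $\sigma(C_T)=0$, with the latter obtained by the same Novikov-additivity argument you give, which appears in the proof of Theorem~\ref{thm:general-replacement}; you carry out that check step by step.
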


\begin{proof}
The proof of Theorem~\ref{thm:BoyleS2S2} uses only the admissible marked
peripheral data and the facts $e(C_T)=2$, $\sigma(C_T)=0$ for a torus
exterior in $S^4$. Proposition~\ref{prop:MF-complement} supplies the
required marking for $C_{\mathrm{MF}}$.
\end{proof}

\begin{theorem}[general marked replacement principle]
\label{thm:general-replacement}
Let $(C_1;\mu_1,\alpha_1,\beta_1)$ and
$(C_2;\mu_2,\alpha_2,\beta_2)$ be any two admissibly marked
$\mathbb Z$--torus exteriors. Glue them to $W_K$ using the same assignments
as above. Then the resulting manifold is simply connected, has intersection
form $H$, and is homeomorphic to $S^2\times S^2$.
\end{theorem}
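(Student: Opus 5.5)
The plan is to rerun the proof of Theorem~\ref{thm:BoyleS2S2} and check that it never uses anything specific to Boyle's tori. The argument has three independent parts, done in this order: the fundamental group, the Euler characteristic and signature, and the integral intersection form followed by Freedman's theorem. In each part we isolate the input used and check that an arbitrary admissibly marked $\mathbb Z$--torus exterior $(C_i;\mu_i,\alpha_i,\beta_i)$ supplies it, as in Definition~\ref{def:admissible}.

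For $\pi_1$, apply Seifert--van Kampen twice, along $\partial\nu R_y$ and along $\partial\nu R_d$. Since $\pi_1(C_i)=\mathbb Z\langle\mu_i\rangle$ and $\alpha_i=\beta_i=1$, the boundary assignments $y\mapsto\alpha_1$, $\rho_y\mapsto\beta_1$, $\mu_y\mapsto\mu_1$ (and the analogous ones for $d$) give
\[
\pi_1(W_K\cup C_1\cup C_2)\cong
\frac{\pi_1(W_K)*\langle\mu_1\rangle*\langle\mu_2\rangle}
{\nn y,\rho_y,d,\rho_d,\ \mu_y\mu_1^{-1},\ \mu_d\mu_2^{-1}\NN}.
\]
We then eliminate $\mu_1,\mu_2$ using the last two relations. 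Killing $\rho_y,\rho_d$ gives $\pi_1(Y_K)$ by Lemma~\ref{lem:standardfill}, and killing $y,d$ gives the trivial group by Lemma~\ref{lem:normalYK}. The "up to harmless inverses" orientation ambiguity does not matter, because replacing a generator by its inverse leaves every normal closure unchanged. We also record that the gluing maps exist: any prescribed ordered primitive basis can be realized by an orientation-reversing diffeomorphism of $T^3$, as remarked after Definition~\ref{def:admissible}.

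For the numerical invariants, note that $C_i$ is the exterior of a torus in $S^4$. Alexander duality then gives $H_1(C_i)\cong\mathbb Z$, $H_2(C_i)\cong\mathbb Z^2$ and $H_3(C_i)=0$, so $e(C_i)=2$. Novikov additivity applied to $S^4=C_i\cup T^2\times D^2$ gives $\sigma(C_i)=0$, exactly as in Lemma~\ref{lem:BoyleInv}, whose proof used only these facts. Since $e(Y_K)=\sigma(Y_K)=0$ and gluing along $T^3$ is additive for both invariants, the result has $e=4$ and $\sigma=0$. Together with simple connectivity this gives $H_2\cong\mathbb Z^2$, with no torsion.

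For the form, we note that $\Sigma_2$ and the parallel fiber $F'$ lie in $W_K$, disjoint from the replaced neighborhoods, so their intersection numbers are computed inside $W_K$ regardless of what is glued in. Their Gram matrix is $\begin{pmatrix}0&1\\1&0\end{pmatrix}$, which is unimodular. By Poincar\'e duality the form on $H_2\cong\mathbb Z^2$ is unimodular, so a sublattice of full rank with determinant $\pm1$ is the whole lattice. Hence these two classes form a basis and $Q\cong H$. Freedman's classification then gives the homeomorphism with $S^2\times S^2$, since an even form determines the homeomorphism type with no Kirby--Siebenmann ambiguity.

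The main obstacle is the $\pi_1$ step, specifically the reduction to $\pi_1(Y_K)$. Lemma~\ref{lem:standardfill} identifies the quotient of $\pi_1(W_K)$ by the meridians $\rho_y,\rho_d$, but here the relations also involve the $\mu$'s. We therefore have to check that identifying $\mu_y,\mu_d$ with free cyclic generators and then eliminating those generators introduces no extra relations. This holds because each $\langle\mu_i\rangle$ factor is free and is matched with exactly one element, but it should be stated explicitly.
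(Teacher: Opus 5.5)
Your proposal is correct and follows the paper's proof. Like the paper, you rerun the van Kampen argument of Theorem~\ref{thm:BoyleS2S2} through Lemmas~\ref{lem:standardfill} and~\ref{lem:normalYK}, get $e(C_i)=2$ and $\sigma(C_i)=0$ for an arbitrary torus exterior in $S^4$ from Alexander duality and Novikov additivity, and take the surviving surfaces $\Sigma_2,F'$ as a full-rank unimodular sublattice. The step you flag as the main obstacle is only a Tietze elimination of the free cyclic generators $\mu_1,\mu_2$; the paper treats it implicitly here and states it explicitly in Lemma~\ref{lem:replacement-quotient}.
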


\begin{proof}
The fundamental-group proof is identical to the first part of
Theorem~\ref{thm:BoyleS2S2}. For any torus $T\subset S^4$, Alexander
duality gives $e(C_T)=2$, and Novikov additivity applied to
\[
S^4=C_T\cup(T^2\times D^2)
\]
gives $\sigma(C_T)=0$. The surfaces $\Sigma_2,F'$ lie in the unchanged
central region and therefore provide the same hyperbolic basis.
\end{proof}

\subsection{Case III: the $3(S^2\times S^2)$ target}
\label{subsec:3S2S2case}

For this case we modify the second fiber-sum step.  Let
\[
Y_i^\circ=Y_K\setminus\Int\nu\Sigma_2,\qquad i=1,2,
\]
and define
\[
X_K^{\mathrm{id}}
=
Y_1^\circ\cup_{\iota}Y_2^\circ,
\]
where
\[
\iota:\Sigma_2\times S^1\longrightarrow\Sigma_2\times S^1
\]
is the identity on $\Sigma_2$ and reverses the normal $S^1$-factor.  Thus
$\iota$ is orientation reversing on the boundary and is the product gluing
on the surface directions.

In a collar of the first boundary component choose two distinct levels and
set
\[
R_y=(y\times\mu)\times\{t_y\},\qquad
R_d=(d\times\mu)\times\{t_d\},
\qquad 0<t_y<t_d.
\]
These are the same product rim tori used in Case II.  Because they lie at
positive collar levels in $Y_1^\circ$, and because the gluing is product on
$\Sigma_2$, they survive as disjoint square-zero tori in
$X_K^{\mathrm{id}}$.  
We first record the effect of the identity gluing on first homology.

\begin{lemma}\label{lem:id-double-H1}
For the trefoil block,
\[
H_1(X_K^{\mathrm{id}};\mathbb Z)
\cong
\mathbb Z\langle d\rangle\oplus
\mathbb Z\langle y\rangle.
\]
Moreover
\[
e(X_K^{\mathrm{id}})=4,\qquad
\sigma(X_K^{\mathrm{id}})=0.
\]
\end{lemma}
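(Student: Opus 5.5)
The plan is to compute the three invariants directly from the decomposition $X_K^{\mathrm{id}}=Y_1^\circ\cup_\iota Y_2^\circ$. We use Mayer--Vietoris for $H_1$, and additivity of $e$ and Novikov additivity of $\sigma$ for the numerical invariants.

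\emph{Step 1: abelianize $\pi_1(Y_K)$.} Start from the presentation in Proposition~\ref{prop:YK}. In the abelianization the relations behave as follows:
\begin{itemize}
\item $dxd^{-1}=xb$ gives $b=0$;
\item $dbd^{-1}=x^{-1}$ gives $x=0$;
\item the braid relation $aba=bab$ gives $a=b$, hence $a=0$;
\item the last relation $ab^2ab^{-4}=[d,y]$ becomes $2a-2b=0$, which is vacuous;
\item the commutator relations are vacuous.
\end{itemize}
Hence
\[
H_1(Y_K;\mathbb Z)\cong\mathbb Z\langle d\rangle\oplus\mathbb Z\langle y\rangle .
\]

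\emph{Step 2: pass to $Y^\circ=Y_K\setminus\Int\nu\Sigma_2$.} Removing a codimension-two surface gives a surjection $H_1(Y^\circ)\to H_1(Y_K)$. Its kernel is generated by the class of the meridian $\mu$. By Proposition~\ref{prop:SigmaComp}, $\mu$ is conjugate to the commutator $[x,b]$, so it is null-homologous in $Y^\circ$. Therefore $H_1(Y^\circ)\cong\mathbb Z\langle d\rangle\oplus\mathbb Z\langle y\rangle$ as well.

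I expect this to be the main point needing care. One must check that the meridian is genuinely a commutator in the \emph{complement} group and not only up to a power, and Proposition~\ref{prop:SigmaComp} gives exactly this. If one wanted to avoid the proposition, the alternative would be the long exact sequence of the pair $(Y_K,Y^\circ)$ together with the fact that $\Sigma_2^2=0$ and $\Sigma_2$ has a dual class $F'$. Then $[\mu]$ is killed because $\Sigma_2$ is homologically essential: by the Thom isomorphism and excision, $H_2(Y_K,Y^\circ)\cong\mathbb Z$, and the map $H_2(Y_K)\to H_2(Y_K,Y^\circ)$ is intersection with $\Sigma_2$. This map is onto, since $F'\cdot\Sigma_2=1$.

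\emph{Step 3: Mayer--Vietoris.} The gluing region is $\Sigma_2\times S^1$, with
\[
H_1(\Sigma_2\times S^1)=\mathbb Z^5\langle a^{-1}b,\ b^{-1}aba^{-1},\ d,\ y,\ \mu\rangle .
\]
By Step~1, the first two generators map to $0$ in each $H_1(Y_i^\circ)$, and by Step~2 so does $\mu$. The generators $d$ and $y$ map to $d_i$ and $y_i$. Since $\iota$ is the identity on $\Sigma_2$, the map $H_1(\Sigma_2\times S^1)\to H_1(Y_1^\circ)\oplus H_1(Y_2^\circ)$ has image spanned by $(d_1,-d_2)$ and $(y_1,-y_2)$. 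The reversal of the normal circle only affects $\mu$, which maps to zero anyway. All three spaces are connected, so the $H_0$ term contributes nothing. Hence
\[
H_1(X_K^{\mathrm{id}})\cong\mathbb Z^4/\langle d_1-d_2,\ y_1-y_2\rangle\cong\mathbb Z\langle d\rangle\oplus\mathbb Z\langle y\rangle .
\]

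\emph{Step 4: Euler characteristic and signature.} We have $e(Y_K)=0$ by \cite{Akh07} and $e(\nu\Sigma_2)=e(\Sigma_2)=-2$. The boundary $\Sigma_2\times S^1$ has Euler characteristic $0$, so $e(Y^\circ)=2$. Gluing two copies along a closed $3$-manifold gives $e(X_K^{\mathrm{id}})=4$. For the signature, $\nu\Sigma_2$ is a disk bundle of Euler number $0$, so it has signature $0$. Novikov additivity then gives $\sigma(Y^\circ)=\sigma(Y_K)=0$. Applying Novikov additivity once more to the double gives $\sigma(X_K^{\mathrm{id}})=0$.
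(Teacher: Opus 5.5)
Your proof is correct. It has the same skeleton as the paper's: compute $H_1$ of each half, run Mayer--Vietoris over $\Sigma_2\times S^1$ with $\gamma_1,\gamma_2$ and the normal circle dying, then use additivity of $e$ and Novikov additivity. The difference is in how you obtain $H_1(Y_K^\circ)$.

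The paper abelianizes the presentation of the complement group from \cite[Lemma~4.7]{Akh07} directly. That route needs the cited fact that the extra generators of that presentation lie in the normal closure of $[x,b]$. You instead abelianize the closed group of Proposition~\ref{prop:YK} and lift to the complement. The lift uses that $H_1(Y_K^\circ)\to H_1(Y_K)$ is onto with kernel generated by $[\mu]$. Your main justification that $[\mu]=0$ (Proposition~\ref{prop:SigmaComp}) is essentially the paper's input.

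Your fallback argument is the more robust of the routes. It uses $H_2(Y_K,Y_K^\circ)\cong H_0(\Sigma_2)\cong\mathbb Z$, together with the fact that $H_2(Y_K)\to H_2(Y_K,Y_K^\circ)$ is intersection with $\Sigma_2$ and hence onto because $F'\cdot\Sigma_2=1$. This is purely homological and needs only the dual torus of Proposition~\ref{prop:H2YKbasis}. It does not depend on the cited structure of the complement presentation or on Proposition~\ref{prop:SigmaComp}. It also shows generally that deleting a surface with a geometric dual leaves $H_1$ unchanged.

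The paper's route has one advantage: it works with the same complement presentation that is needed anyway for the $\pi_1$ computation in Lemma~\ref{lem:identity-double-group}. One small point of bookkeeping: in your Step~3, the vanishing of $\gamma_1,\gamma_2$ in $H_1(Y_i^\circ)$ follows from Step~1 only via the isomorphism established in Step~2.
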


\begin{proof}
We first compute the abelianization of
\[
G=\pi_1(Y_K\setminus\Int\nu\Sigma_2)
\]
from \cite[Lemma~4.7]{Akh07}.  In additive notation the braid relation
gives $a=b$.  The relation
\[
dxd^{-1}=xb
\]
then gives $b=0$, hence $a=0$, and
\[
dbd^{-1}=x^{-1}
\]
gives $x=0$.  The meridian $[x,b]$ is a commutator, and the remaining
relations impose no relation on $d$ or $y$.  Since the additional
generators in Lemma~4.7 lie in the normal subgroup generated by $[x,b]$,
they vanish in the abelianization.  Thus
\[
H_1(Y_K\setminus\Int\nu\Sigma_2;\mathbb Z)
\cong
\mathbb Z\langle d\rangle\oplus
\mathbb Z\langle y\rangle.
\]

The four standard generators of $H_1(\Sigma_2)$ map to
\[
a^{-1}b,\qquad b^{-1}aba^{-1},\qquad d,\qquad y
\]
in the first copy \cite[Lemma~4.6]{Akh07}.  After abelianization the first
two vanish and the last two generate.  Under the identity gluing, $d$ is
identified with $d'$ and $y$ with $y'$.  Hence the Mayer--Vietoris map
\[
H_1(\Sigma_2\times S^1)\longrightarrow
H_1(Y_1^\circ)\oplus H_1(Y_2^\circ)
\]
has image generated by
\[
(d,-d'),\qquad (y,-y').
\]
The normal-circle generator maps trivially to both abelianizations because
it is represented by the meridian $[x,b]$.  Therefore the cokernel is
\[
\mathbb Z\langle d\rangle\oplus\mathbb Z\langle y\rangle.
\]

Finally,
\[
e(X_K^{\mathrm{id}})
=
2e(Y_K)-2e(\Sigma_2)
=
4,
\]
and Novikov additivity gives
\[
\sigma(X_K^{\mathrm{id}})=0.
\]
\end{proof}

We next identify the full second homology of the identity double.  Put
\[
\gamma_1=a^{-1}b,\qquad
\gamma_2=b^{-1}aba^{-1}.
\]
With the standard genus-two generators used in
\cite[Lemma~4.6]{Akh07}, the ordered curves
\[
(\gamma_1,\gamma_2,d,y)
\]
form a symplectic basis of $H_1(\Sigma_2)$: after fixing orientations,
\[
\gamma_1\cdot\gamma_2=1,\qquad d\cdot y=1,
\]
and all other pairings vanish.  The first two classes lie in the kernel of
\[
H_1(\Sigma_2)\longrightarrow H_1(Y_K^\circ),
\]
whereas $d,y$ map to the two generators of $H_1(Y_K^\circ)$.

\begin{lemma}[rim tori and dual classes]
\label{lem:rim-dual-lattice}
Let
\[
K=\ker\!\left\{
H_1(\Sigma_2;\mathbb Z)\longrightarrow
H_1(Y_K^\circ;\mathbb Z)
\right\}.
\]
For the trefoil block,
\[
K=\mathbb Z\langle\gamma_1,\gamma_2\rangle,
\qquad
\gamma_1\cdot\gamma_2=1.
\]
In the identity double $X_K^{\mathrm{id}}$, choose the two rim tori
\[
R_i=\gamma_i\times\mu
\]
at distinct levels of the gluing neck.  There are classes
$V_1,V_2\in H_2(X_K^{\mathrm{id}};\mathbb Z)$ such that
\[
R_i^2=0,\qquad
R_i\cdot V_j=\gamma_i\cdot\gamma_j,
\]
and the self-intersections $V_i^2$ are even.  The rank-four lattice
generated by $R_1,R_2,V_1,V_2$ is therefore isomorphic to $2H$.
\end{lemma}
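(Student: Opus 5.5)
The argument has four steps: compute the kernel $K$, build rim tori from $K$, glue relative surfaces into closed duals $V_j$, and show the resulting lattice is unimodular and even.

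\emph{Step 1: the kernel.} Abelianize exactly as in the proof of Lemma~\ref{lem:id-double-H1}. There $H_1(Y_K^\circ)\cong\mathbb Z\langle d\rangle\oplus\mathbb Z\langle y\rangle$, and the images of $\gamma_1=a^{-1}b$ and $\gamma_2=b^{-1}aba^{-1}$ vanish, because $a=b=0$ after abelianizing. So $\mathbb Z\langle\gamma_1,\gamma_2\rangle\subseteq K$. Since $d$ and $y$ map to a basis of the target, the map $H_1(\Sigma_2)\to\mathbb Z^2$ is surjective, so $K$ has rank exactly two and equals $\mathbb Z\langle\gamma_1,\gamma_2\rangle$. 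The equality $\gamma_1\cdot\gamma_2=1$ is the stated symplectic-basis property of $(\gamma_1,\gamma_2,d,y)$.

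\emph{Step 2: the rim tori.} Place $R_i=\gamma_i\times\mu$ at distinct levels of the neck $\Sigma_2\times S^1\times[-\varepsilon,\varepsilon]$. By the argument of Lemma~\ref{lem:RimGeom}, they are disjoint embedded tori with product framing, so $R_i^2=0$ and $R_1\cdot R_2=0$.

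\emph{Step 3: the dual classes.} Since $\gamma_j$ is null-homologous in each $Y^\circ_k$, it bounds a relative surface $D_j^{(k)}\subset Y_k^\circ$ with $\partial D_j^{(k)}=\gamma_j\times\{\mathrm{pt}\}$ on the boundary. The identity gluing $\iota$ matches $\gamma_j$ with $\gamma_j'$. Gluing $D_j^{(1)}$ to $D_j^{(2)}$ through the neck therefore produces a closed surface $V_j$, which is the Mayer--Vietoris lift of $\gamma_j$ in $\ker\{H_1(\Sigma_2\times S^1)\to H_1(Y_1^\circ)\oplus H_1(Y_2^\circ)\}$.

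Inside the neck, $V_j$ is $\gamma_j\times\{\mathrm{pt}\}\times[-\varepsilon,\varepsilon]$, while $R_i$ sits at one level $t_i$ and is $\gamma_i\times\mu$. Their intersections are the points of $\gamma_i\cap\gamma_j$ on $\Sigma_2$, where the $\mu$-circle of $R_i$ meets the single $\mu$-value of $V_j$. Counting with signs gives $R_i\cdot V_j=\pm\gamma_i\cdot\gamma_j$. Fix orientations so the sign is $+$. The capping pieces $D_j^{(k)}$ can be taken disjoint from the neck levels $t_i$, so they contribute nothing further.

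\emph{Step 4: the lattice.} Let $S=\operatorname{span}\{R_1,R_2,V_1,V_2\}$. In the basis $(R_1,R_2,V_1,V_2)$ the Gram matrix is $\begin{pmatrix}0&J\\ J^{T}&A\end{pmatrix}$ with $J=\begin{pmatrix}0&1\\-1&0\end{pmatrix}$. Its determinant is $\det(J)^2=1$, so $S$ is unimodular. If moreover every $V_i^2$ is even, $S$ is even. It is indefinite because it contains isotropic vectors, so by the classification of indefinite unimodular lattices $S\cong 2H$. One can also see this directly: replacing $V_j$ by $V_j+\sum_i c_{ij}R_i$ kills $A$ once its diagonal entries are even.

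\emph{Main obstacle: parity of $V_i^2$.} Since $A$ is only well-defined up to adding combinations of the $R_i$, the needed statement is that $V_i^2$ is even for some choice of capping surfaces. The plan is to use the identity double. With the normal-reversing gluing $\iota$, one takes $D_i^{(2)}$ to be the mirror copy of $D_i^{(1)}$. The relative self-intersection of $V_i$, computed with respect to the product framing of $\gamma_i$ in $\Sigma_2\times S^1$, then splits as $r+r'$. The key claim to verify is $r'=r$ rather than $r'=-r$, so that $V_i^2=2r$ is even. Checking this sign carefully, and confirming that the framing pushoff of $\gamma_i$ off itself in $\Sigma_2$ adds no correction, is the delicate point.

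If that sign check fails, the fallback is Wu's formula. Here $H_1(X_K^{\mathrm{id}})$ is free by Lemma~\ref{lem:id-double-H1}, so evenness of $V_i^2$ is equivalent to $w_2(X_K^{\mathrm{id}})$ vanishing on $V_i$. The two halves are symplectic complements with $c_1$ pairing to zero on the relative classes, because $\Sigma_2$ has square zero and is glued compatibly. That gives $w_2(V_i)=0$.
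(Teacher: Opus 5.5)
Your outline matches the paper's proof: cap each $\gamma_i$ with the same relative surface in both halves, read $R_i\cdot V_j=\gamma_i\cdot\gamma_j$ off the neck picture $\gamma_j\times\{\mu_0\}\times[-1,1]$, and pass from an even unimodular Gram matrix to $2H$. The paper does the last step with the explicit change of basis $B_1=V_2-m_2R_1+nR_2$, $B_2=-V_1-m_1R_2$, which is your ``kill $A$'' remark.

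The gap is the parity of $V_i^2$. You leave it unproved, and you misdiagnose the difficulty. Write $V_i^2=r+r'$, with both relative Euler numbers measured against the product framing along the gluing curve. You do not need $r'=r$; you only need $|r'|=|r|$, since $r+r$ and $r-r$ are both even.

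That equality is immediate. The second half is a copy of the same relative surface, and reversing a surface's orientation does not change its relative self-intersection. Moreover, $\iota$ carries the product framing of $\gamma_i$ to itself. Reflecting the $\mu$-circle only replaces a push-off in the $+\partial_\theta$ direction by one in the $-\partial_\theta$ direction. These are homotopic nonvanishing sections of the trivial normal $2$-plane bundle over $\gamma_i$, and the $\Sigma_2$-direction push-off gives the same framing, so there is no framing correction either. This is exactly the paper's argument that the two relative normal Euler numbers agree modulo two. In fact both halves sit orientation-preservingly in $X_K^{\mathrm{id}}$, so $r'=r$ and $V_i^2=2r$.

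Your fallback does not work as written. Nothing forces $c_1$ to vanish on the relative classes $D_i^{(k)}$: $\Sigma_2^2=0$ says nothing about this, and a relative evaluation of $c_1$ depends on a boundary trivialization.

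A correct Wu-formula argument instead uses that $Y_K$ is spin (even form $H$, torsion-free $H_1$).
\begin{itemize}
\item A spin structure on $Y_K$ restricts to $\Sigma_2\times S^1$ as (spin structure on $\Sigma_2$)$\times$(bounding structure on the normal circle), because it extends over $\Sigma_2\times D^2$.
\item The gluing $\iota$ preserves this product structure, so two copies of the same spin structure glue and $X_K^{\mathrm{id}}$ is spin.
\item Wu's formula then gives $V_i^2\equiv\langle w_2,V_i\rangle=0 \pmod 2$. This congruence holds for every integral class, so you do not need torsion-freeness of $H_1(X_K^{\mathrm{id}})$ for this step.
\end{itemize}
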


\begin{proof}
Because $\gamma_i$ is null-homologous in $Y_K^\circ$, it bounds a
properly embedded oriented surface $P_i\subset Y_K^\circ$.  Near the
boundary we take
\[
P_i\cap(\partial Y_K^\circ\times[0,\varepsilon))
=
\gamma_i\times[0,\varepsilon).
\]
Use the same relative surface in both halves of the identity double and
glue the two copies along their boundary.  The resulting closed oriented
surface represents a class $V_i$.

Place the rim tori $R_1,R_2$ at different neck levels, so they are
disjoint even though $\gamma_1\cdot\gamma_2=1$.  In oriented local
coordinates on the neck,
\[
\Sigma_2\times S^1_\mu\times[-1,1],
\]
the part of $V_j$ crossing the neck is
$\gamma_j\times\{\mu_0\}\times[-1,1]$.  Hence
\[
R_i\cdot V_j=\gamma_i\cdot\gamma_j.
\]
In particular, after fixing orientations,
\[
R_1\cdot V_2=1,\qquad
R_2\cdot V_1=-1,\qquad
R_1\cdot V_1=R_2\cdot V_2=0.
\]

The doubled surface $V_i$ has even self-intersection.  Fix the boundary
normal framing coming from the product neck.  The two halves are copies
of the same relative surface, so their relative normal Euler numbers
agree modulo two; equivalently, the mod--two Euler number of the doubled
normal bundle is the sum of two identical relative classes.  Hence
$V_i^2$ is even.  Write
\[
V_1^2=2m_1,\qquad
V_2^2=2m_2,\qquad
V_1\cdot V_2=n.
\]
Set
\[
B_1=V_2-m_2R_1+nR_2,\qquad
B_2=-V_1-m_1R_2.
\]
A direct calculation gives
\[
R_i\cdot B_j=\delta_{ij},\qquad
B_1^2=B_2^2=B_1\cdot B_2=0.
\]
The classes $B_i$ may be represented by embedded oriented surfaces by
tubing the corresponding $V_i$ to parallel copies of the rim tori and
resolving the intersections inside the neck. Thus
\[
\langle R_1,B_1,R_2,B_2\rangle\cong2H.
\]
\end{proof}

\begin{proposition}\label{prop:id-double-H2-basis}
The identity double $X_K^{\mathrm{id}}$ has intersection form
\[
Q_{X_K^{\mathrm{id}}}\cong3H.
\]
More precisely, it contains two embedded genus-two surfaces $S,T$ with
\[
S^2=T^2=0,\qquad S\cdot T=1,
\]
and two disjoint essential square-zero rim tori $R_1,R_2$ which generate
the isotropic half of the remaining $2H$ summand.
\end{proposition}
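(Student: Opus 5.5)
We first fix the rank of $H_2(X_K^{\mathrm{id}};\mathbb Z)$. By Lemma~\ref{lem:id-double-H1}, $b_1(X_K^{\mathrm{id}})=2$, hence $b_3=2$. Since $e(X_K^{\mathrm{id}})=4$, we get $b_2=4-2+2\cdot 2=6$. We then produce six explicit classes and show that their intersection matrix is $3H$. By Poincar\'e duality, the intersection form on $H_2/\mathrm{Tors}$ is unimodular of rank six. So if the six classes span a sublattice on which the form is unimodular, that sublattice is an orthogonal direct summand of full rank. It therefore equals $H_2/\mathrm{Tors}$, and $Q\cong 3H$ follows.

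The first $H$ summand comes from two genus-two surfaces. For $S$ we take a parallel push-off of $\Sigma_2$ sitting in the gluing neck $\Sigma_2\times S^1\times[-1,1]$; it has square zero because the product framing is preserved by $\iota$. For $T$ we take the parallel fiber $F'$ used in Theorem~\ref{thm:BoyleS2S2}. It meets $\Sigma_2$ once, so in $Y_1^\circ$ it becomes a punctured torus with boundary a normal circle $\mu$. We glue it to the corresponding punctured torus in $Y_2^\circ$, which is possible because $\iota$ is the identity on $\Sigma_2$ and matches meridian circles. The result is a closed genus-two surface $T$ crossing the neck along $\{p\}\times S^1_\mu\times[-1,1]$. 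It meets the neck-level copy $S=\Sigma_2\times\{\mu_0,0\}$ exactly once, so $S\cdot T=\pm1$. Its square is $F'^2+F'^2=0$, since the relative framings from the two halves add; this is the same computation that gives $\Sigma_2^2=0$ in $X_K$.

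The second step is orthogonality. We place the rim tori $R_i=\gamma_i\times\mu$ and the classes $B_j$ from Lemma~\ref{lem:rim-dual-lattice} so that they avoid $S$ and $T$.
\begin{itemize}
\item The $R_i$ are disjoint from a neck-level copy of $\Sigma_2$ placed at another $\mu$-value, so $R_i\cdot S=0$.
\item For $R_i\cdot T$, the curve $\gamma_i$ can be isotoped off the point $p$, so $R_i\cap T=\emptyset$.
\item $V_j$ meets the neck in $\gamma_j\times\{\mu_0\}\times[-1,1]$, which is disjoint from $S$ once $S$ sits at $\mu_1\ne\mu_0$.
\item $V_j$ meets $T$ only where $\gamma_j$ passes through $p$ in the neck, which we avoid. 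Away from the neck, $P_j$ can be made disjoint from the punctured $F'$ by a general-position argument in $Y_K^\circ$.
\end{itemize}
If this general-position argument fails, we correct $B_j$ by subtracting multiples of $S$ and $T$. This is possible because $\langle S,T\rangle\cong H$ is unimodular, and the correction keeps $R_i\cdot B_j$ unchanged because the $R_i$ are already orthogonal to $S$ and $T$. This Gram--Schmidt-type adjustment is harmless.

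Combining the two steps, the six classes $S,T,R_1,B_1,R_2,B_2$ have Gram matrix $H\oplus 2H$. Its determinant is $-1$, so by the argument in the first paragraph they form a basis of $H_2/\mathrm{Tors}$, and $Q_{X_K^{\mathrm{id}}}\cong 3H$. Essentiality of $R_1$ and $R_2$ is immediate from $R_i\cdot B_i=1$.

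The main obstacle is the square and intersection bookkeeping for $T$ and the $V_j$ across the neck. Specifically, we must check that $\iota$, which reverses the normal circle, turns the two punctured halves into a closed surface with the correct orientation and square zero. We must also check that the relative surfaces $P_j$ can be kept away from the punctured fiber $F'$ in $Y_K^\circ$. The unimodular-correction argument above handles the latter even if it cannot be arranged geometrically, so the real care goes into the framing computation for $T$.
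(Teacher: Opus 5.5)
Your proposal is correct and is essentially the paper's own argument. You take $S$ as a neck copy of $\Sigma_2$ and $T$ glued from the two punctured parallel fibers, use the lattice of Lemma~\ref{lem:rim-dual-lattice} and $b_2=6$ from Lemma~\ref{lem:id-double-H1}, and conclude because a full-rank unimodular sublattice is the whole lattice.

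Two small corrections. First, $R_i$ contains the entire $\mu$-circle, so it must be separated from $S$ by neck level, not by $\mu$-value. Second, general position in dimension four only makes $P_j$ meet the punctured fiber transversely in isolated points, not disjointly. Your fallback $B_j\mapsto B_j-(B_j\cdot T)S$ is nevertheless valid: it leaves the $2H$ Gram matrix unchanged because $S$ is isotropic and orthogonal to the $R_i$ and $B_j$. In fact no correction is needed. $T$ is the unreversed union of the two punctured fibers, while $V_j=P_j\cup(-P_j')$, so the two halves contribute cancelling terms and $T\cdot V_j=0$.
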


\begin{proof}
Take $S$ to be a parallel copy of the gluing surface $\Sigma_2$.  In each
copy of $Y_K$ choose a parallel torus fiber $F_i'$ meeting $\Sigma_2$
once.  Removing a small disk about that intersection point gives a
punctured torus in $Y_i^\circ$, and the identity gluing joins the two
punctured tori to an embedded genus-two surface
\[
T=(F_1'\setminus D^2)\cup(F_2'\setminus D^2).
\]
The product framings give
\[
S^2=T^2=0,\qquad S\cdot T=1.
\]

Choose the intersection point $F_i'\cap\Sigma_2$ away from the curves
$\gamma_1,\gamma_2,d,y$.  The surfaces $S,T$ can then be arranged
disjoint from the two rim tori and from the neck portions used in
Lemma~\ref{lem:rim-dual-lattice}; hence the hyperbolic pair
$\langle S,T\rangle$ is orthogonal to the rank-four lattice there.
Consequently
\[
H\oplus2H
\]
embeds as a unimodular rank-six sublattice of
$H_2(X_K^{\mathrm{id}};\mathbb Z)$.

By Lemma~\ref{lem:id-double-H1},
\[
b_1(X_K^{\mathrm{id}})=2,\qquad e(X_K^{\mathrm{id}})=4.
\]
Poincare duality gives
\[
b_2(X_K^{\mathrm{id}})=e-2+2b_1=6.
\]
The displayed rank-six unimodular sublattice is therefore the whole
intersection lattice, proving
\[
Q_{X_K^{\mathrm{id}}}\cong3H.
\]
\end{proof}

The replacement tori
\[
R_d=d\times\mu,\qquad R_y=y\times\mu
\]
are chosen at collar levels disjoint from $R_1,R_2$ and from the neck
representatives of the dual classes.  Since the standard genus-two basis
pairs $(\gamma_1,\gamma_2)$ and $(d,y)$ on different handles, the curves
$\gamma_i$ are disjoint from $d$ and $y$.  The surfaces used above may
therefore be chosen disjoint from $\nu R_d\cup\nu R_y$.  The full
$3H$-lattice survives the two marked replacements.

Let $C_1,C_2$ be admissibly marked $\mathbb Z$--torus exteriors with
markings
\[
(\mu_i,\alpha_i,\beta_i),
\qquad
\pi_1(C_i)=\langle\mu_i\rangle\cong\mathbb Z,
\qquad
\alpha_i=\beta_i=1.
\]
On $\partial\nu R_y$ use the ordered basis
\[
(y,\mu_y,\rho_y),
\]
and on $\partial\nu R_d$ use
\[
(d,\mu_d,\rho_d).
\]
Choose the two boundary identifications
\[
y\mapsto\alpha_1,\qquad
\rho_y\mapsto\beta_1,\qquad
\mu_y\mapsto\mu_1,
\]
and
\[
d\mapsto\alpha_2,\qquad
\rho_d\mapsto\beta_2,\qquad
\mu_d\mapsto\mu_2.
\]
Denote the resulting closed manifold by
\[
\widehat X_K^{\mathrm{id}}(C_1,C_2).
\]

\begin{lemma}\label{lem:replacement-quotient}
Let $M$ be a connected $4$--manifold containing pairwise disjoint
square-zero tori $R_1,\ldots,R_r$.  Put
\[
W=M\setminus\Int\bigl(\nu R_1\cup\cdots\cup\nu R_r\bigr),
\]
and on $\partial\nu R_i$ choose an ordered basis
\[
(\lambda_i,\eta_i,\rho_i),
\]
where $\rho_i$ is the meridian of $R_i$.  Let
$(C_i;\mu_i,\alpha_i,\beta_i)$ be admissibly marked
$\mathbb Z$--torus exteriors and glue them so that
\[
\lambda_i\mapsto\alpha_i,\qquad
\rho_i\mapsto\beta_i,\qquad
\eta_i\mapsto\mu_i.
\]
Then
\[
\pi_1\!\left(
W\cup_{\partial\nu R_1}C_1\cup\cdots\cup_{\partial\nu R_r}C_r
\right)
\cong
\pi_1(M)\big/\!\big\langle\!\big\langle
\lambda_1,\ldots,\lambda_r
\big\rangle\!\big\rangle.
\]
\end{lemma}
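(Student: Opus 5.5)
The plan is to compute the fundamental group of the glued manifold in two stages with Seifert--van Kampen. In the first stage we reconstruct $M$ from $W$. In the second we compare that with the gluing of the exteriors $C_i$. Write $N=W\cup C_1\cup\cdots\cup C_r$. Each gluing region $\partial\nu R_i\cong T^3$ is connected with abelian fundamental group $\mathbb Z^3$ generated by $\lambda_i,\eta_i,\rho_i$. Choose basepaths from a basepoint in $W$ to each boundary component, so that the peripheral classes become well-defined elements of $\pi_1(W)$ up to the usual conjugation; this ambiguity is harmless because only normal closures appear. Since $W$ is connected, applying van Kampen $r$ times (one gluing at a time, each along a connected $T^3$) gives
\[
\pi_1(N)\cong\Bigl(\pi_1(W)*\mathop{*}_{i}\pi_1(C_i)\Bigr)\Big/\bigl\langle\!\bigl\langle \lambda_i\alpha_i^{-1},\ \rho_i\beta_i^{-1},\ \eta_i\mu_i^{-1}\ (1\le i\le r)\bigr\rangle\!\bigr\rangle,
\]
with orientation signs absorbed into the choice of generators.

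Next we insert the admissibility data from Definition~\ref{def:admissible}. We have $\pi_1(C_i)=\langle\mu_i\rangle\cong\mathbb Z$ and $\alpha_i=\beta_i=1$. So each free factor $\pi_1(C_i)$ is generated by $\mu_i$, and the relation $\eta_i=\mu_i$ lets us eliminate $\mu_i$ by a Tietze move. No relation is imposed on $\eta_i$, since $\mu_i$ is a free generator of $\mathbb Z$. The remaining relations become $\lambda_i=1$ and $\rho_i=1$. The result is
\[
\pi_1(N)\cong\pi_1(W)/\langle\!\langle \lambda_1,\ldots,\lambda_r,\rho_1,\ldots,\rho_r\rangle\!\rangle .
\]

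Finally we identify $\pi_1(W)/\langle\!\langle\rho_1,\ldots,\rho_r\rangle\!\rangle$ with $\pi_1(M)$, exactly as in Lemma~\ref{lem:standardfill}. The manifold $M$ is recovered from $W$ by gluing back the neighborhoods $\nu R_i\cong T^2\times D^2$, using the product framing in which $\rho_i$ bounds the $D^2$-factor. Van Kampen for each such filling kills exactly $\rho_i$: the classes $\lambda_i,\eta_i$ map isomorphically onto $\pi_1(T^2\times D^2)\cong\mathbb Z^2$ and are identified with the core directions. Under this isomorphism the images of $\lambda_i$ are the corresponding classes in $\pi_1(M)$. Quotienting further by the $\lambda_i$ then gives the claimed formula.

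The main point requiring care is the middle step. We must check that the relation $\eta_i=\mu_i$ adds no hidden constraint, and that the marking really has $\rho_i$ sent to a null slope. The whole argument depends on the meridian $\rho_i$ being matched with $\beta_i$ and not with $\mu_i$. If $\rho_i$ were matched with $\mu_i$, one would instead get a quotient in which $\rho_i$ survives. We will also note explicitly that the basepoint and orientation conventions ("up to harmless inverses") do not affect normal closures.
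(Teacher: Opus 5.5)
Your proposal is correct and follows essentially the same argument as the paper. Both use Seifert--van Kampen along each $T^3$ and the Tietze elimination of $\mu_i$ via $\eta_i=\mu_i$, which is legitimate because $\pi_1(C_i)=\langle\mu_i\rangle$ and $\alpha_i=\beta_i=1$; both then identify $\pi_1(W)/\langle\!\langle\rho_1,\ldots,\rho_r\rangle\!\rangle\cong\pi_1(M)$ by refilling the tori. The only difference is that you do the two van Kampen steps in the opposite order, which does not matter.
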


\begin{proof}
Removing the tori introduces meridians $\rho_i$.  Restoring the deleted
neighborhoods $T^2\times D^2$ kills precisely these meridians, so van
Kampen gives
\[
\pi_1(W)\big/\!\big\langle\!\big\langle
\rho_1,\ldots,\rho_r
\big\rangle\!\big\rangle
\cong
\pi_1(M).
\]
For an admissibly marked exterior,
\[
\pi_1(C_i)=\langle\mu_i\rangle\cong\mathbb Z,
\qquad
\alpha_i=\beta_i=1.
\]
The gluing therefore imposes
\[
\lambda_i=1,\qquad \rho_i=1,\qquad \eta_i=\mu_i.
\]
The last relation introduces no new generator: $\mu_i$ is identified with
the already existing element $\eta_i$.  Eliminating the $\mu_i$ and then
the meridians $\rho_i$ leaves exactly
\[
\pi_1(M)\big/\!\big\langle\!\big\langle
\lambda_1,\ldots,\lambda_r
\big\rangle\!\big\rangle.
\]
\end{proof}

\begin{lemma}\label{lem:identity-double-group}
For the trefoil,
\[
\pi_1\bigl(\widehat X_K^{\mathrm{id}}(C_1,C_2)\bigr)=1.
\]
\end{lemma}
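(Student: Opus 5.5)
The plan is to reduce to a quotient of $\pi_1(X_K^{\mathrm{id}})$ with Lemma~\ref{lem:replacement-quotient}, then kill each half of the identity double separately. Apply the lemma with $M=X_K^{\mathrm{id}}$, the two tori $R_y,R_d$ (which lie in a collar of the first boundary component of $Y_1^\circ$), and the ordered bases $(y,\mu_y,\rho_y)$, $(d,\mu_d,\rho_d)$. This gives
\[
\pi_1\bigl(\widehat X_K^{\mathrm{id}}(C_1,C_2)\bigr)\cong
\pi_1(X_K^{\mathrm{id}})/\nn y,d\NN .
\]
It then suffices to show that $y$ and $d$ normally generate $\pi_1(X_K^{\mathrm{id}})$.

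Next I would write $\pi_1(X_K^{\mathrm{id}})$ by Seifert--van Kampen as the amalgamated product
\[
G_1 *_{\pi_1(\Sigma_2\times S^1)} G_2,\qquad G_i=\pi_1(Y_i^\circ)=\pi_1(Y_K\setminus\Int\nu\Sigma_2).
\]
Because $\iota$ is the identity on $\Sigma_2$, the images of the standard generators $\gamma_1,\gamma_2,d,y$ of $\pi_1(\Sigma_2)$ in $G_1$ are identified with the corresponding elements $\gamma_1',\gamma_2',d',y'$ of $G_2$. The normal circle $\mu$ is identified with $\mu'^{\pm1}$. Here $\mu$ is conjugate to $[x,b]$ by Proposition~\ref{prop:SigmaComp}. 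Consequently, imposing $y=d=1$ (from the first copy) also imposes $y'=d'=1$ in the second copy. The quotient is therefore a quotient of the amalgam of $G_1/\nn d,y\NN$ and $G_2/\nn d',y'\NN$. It is enough to prove
\[
G/\nn d,y\NN=1,\qquad G=\pi_1(Y_K\setminus\Int\nu\Sigma_2),
\]
since an amalgam of two trivial groups is trivial.

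For this I would use the complement presentation of \cite[Lemma~4.7]{Akh07}, the same one used in the proof of Lemma~\ref{lem:id-double-H1}. I would argue as in Lemma~\ref{lem:normalYK}. Setting $d=1$ in $dxd^{-1}=xb$ gives $b=1$. Then $dbd^{-1}=x^{-1}$ gives $x=1$, and the braid relation gives $a=1$. Now the meridian $[x,b]$ is trivial. The remaining additional generators of that presentation lie in $\nn [x,b]\NN$, as recorded in the proof of Lemma~\ref{lem:id-double-H1}, so they die as well. Finally $y=d=1$ by assumption, so every generator is trivial.

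The main obstacle is this third step. One must confirm that the relations $dxd^{-1}=xb$, $dbd^{-1}=x^{-1}$ and $aba=bab$ hold \emph{in the complement group} $G$, not merely in $\pi_1(Y_K)$. They might instead hold only up to factors of the meridian, because the section of $C_F$ containing $d,y$ is punctured along $\Sigma_2$. I would first check this against the decomposition $(C_S\setminus\nu F_0)\cup(C_F\setminus\nu S_0)$ used in the proof of Proposition~\ref{prop:SigmaComp}. If the relations acquire meridian corrections, my fallback is a two-stage argument. First, pass to $G/\nn d,y\NN$; there the $d$-conjugation relations still hold up to elements of $\nn[x,b]\NN$. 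Then show $b$ and $x$ become trivial modulo $\nn [x,b]\NN$, and then that $[x,b]=1$ follows, closing the argument.
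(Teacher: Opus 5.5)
Your proposal is correct and is essentially the paper's proof. You reduce by Lemma~\ref{lem:replacement-quotient} to $\pi_1(X_K^{\mathrm{id}})/\nn y,d\NN$, use the product gluing to force $d'=y'=1$, and kill each vertex group $G$ via $dxd^{-1}=xb$, $dbd^{-1}=x^{-1}$, $aba=bab$, together with the fact that the extra generators of \cite[Lemma~4.7]{Akh07} lie in $\nn[x,b]\NN$. The paper takes these relations to hold exactly in $G$ from that cited presentation, so your fallback is not needed. This is fortunate, since knowing only $x,b\in\nn[x,b]\NN$ shows just that the quotient is perfect, and ``then $[x,b]=1$ follows'' would need a genuinely new argument.
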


\begin{proof}
Apply Lemma~\ref{lem:replacement-quotient} to
\[
M=X_K^{\mathrm{id}}
\]
and to the two rim tori $R_y,R_d$.  With the chosen boundary markings,
$\lambda_1=y$ and $\lambda_2=d$.  Hence
\[
\pi_1\bigl(\widehat X_K^{\mathrm{id}}(C_1,C_2)\bigr)
\cong
\pi_1(X_K^{\mathrm{id}})
\big/\!\big\langle\!\big\langle y,d\big\rangle\!\big\rangle.
\]

Write
\[
G=\pi_1(Y_K\setminus\Int\nu\Sigma_2).
\]
The identity double is an amalgam of two copies of $G$ over
$\pi_1(\Sigma_2\times S^1)$.  In the first copy the four surface
generators map to
\[
a^{-1}b,\qquad b^{-1}aba^{-1},\qquad d,\qquad y,
\]
and the normal circle maps to the meridian $[x,b]$
\cite[Lemmas~4.6--4.7]{Akh07}.  In the second copy we use primed
generators, and the product gluing identifies the corresponding surface
generators and the two normal circles with opposite orientation.

Now quotient by the normal closure of $d$ and $y$.  The product gluing
also gives
\[
d'=y'=1.
\]
We claim that each copy of $G$ becomes trivial.  The presentation in
\cite[Lemma~4.7]{Akh07} contains
\[
aba=bab,\qquad
dxd^{-1}=xb,\qquad
dbd^{-1}=x^{-1}.
\]
Setting $d=1$ gives
\[
x=xb,
\]
hence $b=1$.  The third relation then gives
\[
1=x^{-1},
\]
so $x=1$.  With $b=1$, the braid relation becomes
\[
a^2=a,
\]
and hence $a=1$.

The meridian $[x,b]$ is therefore trivial.  Lemma~4.7 states that the
additional generators $g_i$ and relators $r_j$ lie in the normal subgroup
generated by $[x,b]$; hence the $g_i$ also become trivial.  Its remaining
relator $r_{n+1}$ becomes $[x,a]=1$, which is already satisfied.  Thus
\[
G/\!\langle\!\langle d,y\rangle\!\rangle=1.
\]
The identical argument applies to the primed copy.  Since both vertex
groups of the amalgam become trivial,
\[
\pi_1\bigl(\widehat X_K^{\mathrm{id}}(C_1,C_2)\bigr)=1.
\]
\end{proof}

\begin{theorem}\label{thm:id-rank6}
For any two admissibly marked $\mathbb Z$--torus exteriors,
\[
\pi_1\bigl(\widehat X_K^{\mathrm{id}}(C_1,C_2)\bigr)=1,
\qquad
e\bigl(\widehat X_K^{\mathrm{id}}(C_1,C_2)\bigr)=8,
\qquad
\sigma\bigl(\widehat X_K^{\mathrm{id}}(C_1,C_2)\bigr)=0.
\]
Moreover
\[
Q_{\widehat X_K^{\mathrm{id}}}\cong3H,
\]
and therefore
\[
\widehat X_K^{\mathrm{id}}(C_1,C_2)
\approx
\#_3(S^2\times S^2).
\]
\end{theorem}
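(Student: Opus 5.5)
The proof splits into three parts, each assembled from earlier results: fundamental group, Euler characteristic and signature, and then the intersection form followed by Freedman.

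\emph{Fundamental group.} Simple connectivity is exactly Lemma~\ref{lem:identity-double-group}. That lemma only uses the admissible marking through Lemma~\ref{lem:replacement-quotient}, which reduces $\pi_1$ to $\pi_1(X_K^{\mathrm{id}})/\nn y,d\NN$. This quotient was then shown to be trivial because each vertex group $G$ of the amalgam dies once $d=y=1$. So nothing about $C_1,C_2$ beyond Definition~\ref{def:admissible} enters, and I would simply cite it.

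\emph{Euler characteristic and signature.} I would compute these by additivity, starting from Lemma~\ref{lem:id-double-H1}, which gives $e(X_K^{\mathrm{id}})=4$ and $\sigma(X_K^{\mathrm{id}})=0$. Removing two copies of $T^2\times D^2$ does not change $e$, since $e(T^2\times D^2)=e(T^3)=0$. Each glued exterior contributes $e(C_i)=2$ by Alexander duality, as in the proof of Theorem~\ref{thm:general-replacement}. Hence $e=8$. For the signature, Novikov additivity applies because all gluings are along closed $3$-manifolds. The removed pieces $T^2\times D^2$ have signature zero, and $\sigma(C_i)=0$ by the argument of Lemma~\ref{lem:BoyleInv}. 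Therefore $\sigma=0$.

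\emph{Intersection form and homeomorphism type.} Simple connectivity together with $e=8$ gives $b_2=6$ and $H_2\cong\mathbb Z^6$, which is free since $H_1=0$. Proposition~\ref{prop:id-double-H2-basis} gives a rank-six unimodular lattice $\langle S,T\rangle\oplus\langle R_1,B_1,R_2,B_2\rangle\cong 3H$ in $X_K^{\mathrm{id}}$, and the paragraph following it arranges all representing surfaces to be disjoint from $\nu R_d\cup\nu R_y$. These surfaces therefore lie in $X_K^{\mathrm{id}}\setminus\Int(\nu R_d\cup\nu R_y)$. They persist in $\widehat X_K^{\mathrm{id}}$ with the same pairings, because intersection numbers are computed locally. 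A rank-six unimodular sublattice of a rank-six lattice must be the whole lattice: its index $k$ satisfies $k^2\cdot\det(\text{ambient})=\det(\text{sublattice})=\pm1$, which forces $k=1$. So $Q\cong 3H$. The form is even, indefinite and unimodular, so Freedman's classification \cite{Freedman} gives $\widehat X_K^{\mathrm{id}}\approx\#_3(S^2\times S^2)$ up to homeomorphism; no Kirby--Siebenmann ambiguity arises because the form is even.

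\emph{Main obstacle.} The delicate step is justifying that the classes survive the replacement. This requires checking that the surfaces $V_i$ and the tubed classes $B_i$ can really be made disjoint from $R_d,R_y$. In particular, the relative surfaces $P_i$ bounding $\gamma_i$ inside $Y_K^\circ$ must avoid the collar levels $t_y,t_d$ away from the neck. I would handle this by placing $R_y,R_d$ at collar levels in $Y_1^\circ$ and isotoping the $P_i$ so that, inside the collar, they are exactly $\gamma_i\times[0,\varepsilon)$. Since $\gamma_i$ is disjoint from $d$ and $y$ on $\Sigma_2$, these products miss $(y\times\mu)\times\{t_y\}$ and $(d\times\mu)\times\{t_d\}$. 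Outside the collar the $P_i$ can be perturbed off the rim tori by general position, or by pushing them further from the boundary.
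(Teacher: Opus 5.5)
Your proposal is correct and follows essentially the same route as the paper. The paper gets simple connectivity from Lemma~\ref{lem:identity-double-group} and $e=4+2+2$, $\sigma=0$ by additivity, then shows that the $3H$ lattice of Proposition~\ref{prop:id-double-H2-basis}, kept disjoint from $\nu R_y\cup\nu R_d$, fills $H_2\cong\mathbb Z^6$, and concludes with Freedman. Your index computation and your check that the product collars $\gamma_i\times[0,\varepsilon)$ of the $P_i$ miss $R_y,R_d$ only make explicit what the paper leaves to the paragraph following that proposition.
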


\begin{proof}
Simple connectivity is Lemma~\ref{lem:identity-double-group}.  Each
replacement removes $T^2\times D^2$ and inserts a torus exterior $C_i$
with
\[
e(C_i)=2,\qquad \sigma(C_i)=0.
\]
Hence
\[
e=4+2+2=8,\qquad \sigma=0.
\]
By Proposition~\ref{prop:id-double-H2-basis} and the paragraph following
it, the full $3H$-lattice is represented away from the replacement
regions and survives in the final manifold.  Since simple connectivity and
$e=8$ give $b_2=6$, this is the full intersection form.  Freedman's theorem \cite{Freedman} gives the stated
homeomorphism type.
\end{proof}

\paragraph{Boyle and Matsumoto--Fukaya choices.}

Taking $C_i$ to be Boyle exteriors gives the Boyle version of
Theorem~\ref{thm:id-rank6}.  Taking $C_i=C_{\mathrm{MF}}$ with the marking
of Proposition~\ref{prop:MF-complement} gives the
Matsumoto--Fukaya version.  The same van Kampen calculation applies because
only the marked peripheral data are used.

\section{Intersection forms and symplectic consequences}

We use the standard symplectic framework originating in Gromov's work on
pseudoholomorphic curves \cite{Gromov85}; the symplectic sum operation used
here is the construction of Gompf and McCarthy--Wolfson
\cite{Gompf95,McCarthyWolfson94}.
\label{sec:geometric-symplectic}
\paragraph{Relation with the earlier constructions.}

The symplectic blocks $Y_K$ and $X_K$ were introduced in \cite{Akh07}; the
fundamental-group calculations used here appear in \cite{AP08}. The
Luttinger-surgery presentation of the constituent $M_K\times S^1$ was recalled
in Subsection~\ref{subsec:MK-luttinger}. In particular, when $g=1$, replacing
the mapping-torus description of $M_K\times S^1$ by two Luttinger surgeries
on $T^4$ changes the presentation, not the underlying building block.
Luttinger surgery was then used systematically in \cite{AP08,AP10} to impose
additional relations on Lagrangian product tori while preserving
symplecticity. Related torus-surgery approaches to small exotic
$4$-manifolds include reverse engineering and surgery on nullhomologous
tori; see \cite{FPSreverse,FSnull}.

The knot-surgery and Luttinger-surgery viewpoints are retained throughout
because they illuminate complementary parts of the construction. For
$M_K\times S^1$ the relation is exact: the monodromy of the fibered knot
determines a Luttinger-surgery presentation. The knot-surgery/fiber-sum
description of $Y_K$ and $X_K$ then makes the surviving section, fiber,
genus-two surface, and rim tori visible. The higher product-surface
Luttinger constructions show how analogous fundamental-group relations can
be imposed symplectically on standard products. We do not identify those
higher comparison models with a specified $Y_K$ or $X_K$ unless the cited
construction proves that identification.

The replacement considered here is different. A Luttinger surgery removes
and reglues $T^2\times D^2$, so it preserves Euler characteristic and
signature. Here a rim-torus neighborhood is replaced by a torus exterior
$C_T\subset S^4$ with
\[
e(C_T)=2.
\]
Accordingly the Euler characteristic changes by two for each inserted
exterior. This distinction separates the Luttinger-surgery presentation of
the starting blocks from the marked-exterior replacement studied below.
\subsection{Homology and geometric representatives}
\label{sec:homology-representatives}

We identify the second-homology classes before and after the Boyle replacements.

\paragraph{The homology of $Y_K$.}

Abelianize the presentation of $\pi_1(Y_K)$ from
\cite[Lemma~7]{AP08}. In additive notation, the braid relation gives $a=b$.
The relation
\[
dxd^{-1}=xb
\]
gives $b=0$, hence $a=0$. The relation
\[
dbd^{-1}=x^{-1}
\]
then gives $x=0$. The final longitude relation contributes nothing further in
abelianization. Therefore only $d$ and $y$ remain.

\begin{proposition}\label{prop:H1YK}
For the trefoil block,
\[
H_1(Y_K;\mathbb Z)\cong
\mathbb Z\langle d\rangle\oplus\mathbb Z\langle y\rangle .
\]
Consequently
\[
H_3(Y_K;\mathbb Z)\cong\mathbb Z^2.
\]
Since $e(Y_K)=0$,
\[
H_2(Y_K;\mathbb Z)\cong\mathbb Z^2.
\]
\end{proposition}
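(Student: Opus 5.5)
The proof has three steps: compute $H_1$ by abelianizing, get $H_3$ by duality, and get $H_2$ from the Euler characteristic.

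\textbf{Step 1: $H_1(Y_K)$.} Abelianize the presentation of Proposition~\ref{prop:YK}, as in the paragraph preceding the statement. In additive notation the relations read as follows.
\begin{itemize}[leftmargin=*]
\item The braid relation gives $a=b$.
\item The relation $dxd^{-1}=xb$ gives $b=0$, hence $a=0$.
\item The relation $dbd^{-1}=x^{-1}$ gives $x=0$.
\item The commutator relations $[x,a]=[x,b]=[y,x]=[y,b]=1$ impose nothing.
\item The relation $ab^2ab^{-4}=[d,y]$ becomes $2a-2b=0$, which already holds.
\end{itemize}
So no relation involves $d$ or $y$. Before abelianizing, I would eliminate $a,b,x$ explicitly so that the remaining relation matrix on $d,y$ is visibly zero. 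This gives a free abelian group of rank two, with no torsion, and hence $H_1(Y_K;\mathbb Z)\cong\mathbb Z\langle d\rangle\oplus\mathbb Z\langle y\rangle$.

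\textbf{Step 2: $H_3(Y_K)$.} Since $Y_K$ is a closed oriented $4$-manifold, Poincar\'e duality gives $H_3(Y_K)\cong H^1(Y_K)$. By universal coefficients, $H^1(Y_K)\cong\operatorname{Hom}(H_1(Y_K),\mathbb Z)\cong\mathbb Z^2$.

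\textbf{Step 3: $H_2(Y_K)$.} We have $b_0=b_4=1$ and $b_1=b_3=2$. With $e(Y_K)=0$, recalled from \cite{Akh07} in the proof of Theorem~\ref{thm:BoyleS2S2}, the alternating sum gives $1-2+b_2-2+1=0$, so $b_2=2$.

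The main obstacle is integrality, i.e.\ ruling out torsion in $H_2$. By universal coefficients,
\[
\operatorname{Tors}H_2(Y_K)\cong\operatorname{Tors}H^3(Y_K)\cong\operatorname{Tors}H_1(Y_K)=0,
\]
using duality $H^3\cong H_1$ and the torsion-freeness from Step 1. Hence $H_2(Y_K;\mathbb Z)\cong\mathbb Z^2$.

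As a check, $H_2$ should be consistent with the square-zero classes $\Sigma_2$ and the parallel fiber $F'$ meeting once, which span a unimodular rank-two sublattice. Alternatively, one can use the symplectic-sum origin of $Y_K$: the invariants $e(Y_K)=0$ and $\sigma(Y_K)=0$ come from $e=\sigma=0$ for $M_K\times S^1$ together with additivity along tori.
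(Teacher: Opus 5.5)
Your proposal is correct and follows the paper's proof essentially step for step. You abelianize the presentation of Proposition~\ref{prop:YK} to get $H_1\cong\mathbb Z^2$, use Poincar\'e duality for $b_3=2$ and the Euler characteristic for $b_2=2$, and then combine universal coefficients with duality ($\operatorname{Tors}H_2\cong\operatorname{Tors}H^3\cong\operatorname{Tors}H_1=0$) to rule out torsion, which just spells out the paper's one-line torsion argument.
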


\begin{proof}
The calculation of $H_1$ is the abelianization above. Poincare duality gives
$b_3=b_1=2$. Since
\[
0=e(Y_K)=1-b_1+b_2-b_3+1,
\]
we obtain $b_2=2$. Because $H_1(Y_K)$ is torsion free, the universal
coefficient theorem together with Poincare duality shows that $H_2(Y_K)$ is
torsion free.
\end{proof}

\paragraph{A geometric basis of $H_2(Y_K)$.}

Recall from \cite[Section~4]{Akh07} that
\[
Y_K=(M_K\times S^1)\#_{F=T_m}(M_K\times S^1).
\]
Let $T_1$ denote the section in the first copy and $T_2$ a fiber in the second
copy. Their punctured versions glue to the genus-two surface
\[
\Sigma_2=T_1\#T_2\subset Y_K.
\]
The source states that $\Sigma_2$ is symplectically embedded and has
self-intersection zero.

Choose a second fiber
\[
F'\subset M_K\times S^1
\]
in the first copy, parallel to the fiber removed in forming $Y_K$ and disjoint
from that removed fiber. The source states explicitly that torus fibers and
torus sections in $M_K\times S^1$ are symplectically embedded and have
self-intersection zero \cite[Section~4]{Akh07}. Choosing the parallel fiber
inside the symplectic torus-bundle region, $F'$ survives as a closed square-zero
torus in $Y_K$. We use only its smooth embedded representative and intersection
numbers below; no later argument depends on $F'$ remaining symplectic after the
Boyle replacements.

\begin{proposition}\label{prop:H2YKbasis}
The classes
\[
[\Sigma_2],\ [F']\in H_2(Y_K;\mathbb Z)
\]
form a basis. Their intersection matrix is
\[
H=
\begin{pmatrix}
0&1\\
1&0
\end{pmatrix}.
\]
Hence
\[
Q_{Y_K}\cong H.
\]
\end{proposition}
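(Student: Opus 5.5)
The plan has three steps: compute the intersection numbers of the two displayed surfaces, show that their determinant forces them to be an integral basis of $H_2(Y_K;\mathbb Z)\cong\mathbb Z^2$, and read off $Q_{Y_K}$.

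\emph{Step 1: intersection numbers.} By \cite[Section~4]{Akh07}, $\Sigma_2$ is an embedded symplectic surface with $\Sigma_2^2=0$. The torus $F'$ is a parallel copy of a fiber of the torus bundle $M_K\times S^1\to T^2$ in the first copy. Its normal bundle is trivialized by the bundle projection, so $(F')^2=0$. For $\Sigma_2\cdot F'$:
\begin{itemize}
\item $F'$ is chosen disjoint from the fiber $F_1$ removed in the sum, so it lies in $C_S\subset Y_K$.
\item There it meets only the part of $\Sigma_2$ coming from the first copy, namely the punctured section $T_1\setminus D^2$.
\item The punctured fiber $T_2\setminus D^2$ lies in the second copy and misses $F'$.
\item A fiber and a section of $M_K\times S^1$ meet transversely in exactly one point, and we may place the removed disk of $T_1$ away from that point. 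Hence $\Sigma_2\cap F'$ is a single transverse point.
\item Both surfaces are symplectic near this point, so the intersection is positive and $\Sigma_2\cdot F'=1$.
\end{itemize}
This gives the intersection matrix $H$ for the pair $[\Sigma_2],[F']$.

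\emph{Step 2: the pair is a basis.} By Proposition~\ref{prop:H1YK}, $H_2(Y_K;\mathbb Z)\cong\mathbb Z^2$ and is torsion free. Since $Y_K$ is closed, Poincar\'e duality makes the intersection form on $H_2/\mathrm{Tors}=H_2$ unimodular. Now take any two classes $u,v$ of a lattice of rank two whose Gram matrix has determinant $\pm1$. Then the map $H_2\to\mathbb Z^2$, $z\mapsto(z\cdot u,\,z\cdot v)$, restricted to $\mathbb Z\langle u,v\rangle$, is already onto. By unimodularity the map is injective on all of $H_2$, so $\mathbb Z\langle u,v\rangle=H_2$. Concretely, if $z=ru+sv$ with $r,s\in\mathbb Q$, pairing $z$ with $u$ and with $v$ and using $\det=\pm1$ forces $r,s\in\mathbb Z$. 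Applied to $u=[\Sigma_2]$, $v=[F']$ with $\det H=-1$, this shows they form a basis, and hence $Q_{Y_K}\cong H$.

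\emph{Main obstacle.} The delicate point is verifying $\Sigma_2\cdot F'=1$ through the fiber-sum construction. I must check that the parallel fiber $F'$ in the first copy really avoids the neck and the second-copy part $T_2\setminus D^2$. I also need the section $T_1=(T_m)_1$ to remain a section through the point where it meets $F'$, so that no extra intersections with the glued part of $\Sigma_2$ appear. I would argue this by taking $F'=f^{-1}(p)$ for a base point $p$ far from the point over which $F_1$ was removed, and by removing the disk of $T_1$ inside $\nu F_1$.
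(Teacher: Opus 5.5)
Your proposal is correct and follows the paper's proof: the section $T_1$ meets the parallel fiber $F'$ exactly once, $F'$ avoids the removed fiber and the second-copy half of $\Sigma_2$, and the rank-two count from Proposition~\ref{prop:H1YK} finishes the argument (the sign of $\Sigma_2\cdot F'$ does not matter, since reorienting $F'$ fixes it, so you do not need the claim that two symplectic surfaces necessarily meet positively, which is not true in general). Your Step~2 is more careful than the paper, which passes directly from linear independence and rank two to an integral basis; independence alone only gives a finite-index sublattice, and it is the Gram determinant $\pm1$ together with torsion-freeness of $H_2(Y_K;\mathbb Z)$ that rules out a proper one.
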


\begin{proof}
We have
\[
\Sigma_2^2=(F')^2=0.
\]
The section $T_1$ meets each fiber of the first torus bundle once. Since $F'$
is disjoint from the fiber removed in forming $Y_K$, its intersection point
with $T_1$ survives the fiber sum. The second half of $\Sigma_2$ lies in the
other summand and is disjoint from $F'$. Hence
\[
\Sigma_2\cdot F'=1.
\]
The two classes are therefore linearly independent. Proposition~\ref{prop:H1YK}
shows that $H_2(Y_K;\mathbb Z)$ has rank two, so they form a basis.
\end{proof}

\begin{corollary}
The manifold $Y_K$ is spin.
\end{corollary}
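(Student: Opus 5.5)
We show that the intersection form of $Y_K$ is even, and then use the vanishing of the relevant characteristic-class obstruction. By Proposition~\ref{prop:H2YKbasis}, $H_2(Y_K;\mathbb Z)$ is free with basis $[\Sigma_2],[F']$ and intersection matrix $H$. For any class $\alpha=p[\Sigma_2]+q[F']$ we get $\alpha\cdot\alpha=2pq$, so $Q_{Y_K}$ is even. Equivalently, $w_2(Y_K)$ evaluates to zero on every integral class, since by Wu's formula $\langle w_2,\alpha\rangle\equiv\alpha\cdot\alpha \pmod 2$ on integral classes.

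Next we pass from integral to mod-two classes. Because $Y_K$ is not simply connected, evenness on integral classes alone does not yet give $w_2=0$. We need $H^2(Y_K;\mathbb Z/2)$ to be detected by integral classes, with no extra mod-two classes coming from torsion. Proposition~\ref{prop:H1YK} shows that $H_1(Y_K;\mathbb Z)\cong\mathbb Z^2$ is torsion free, and so is $H_2(Y_K;\mathbb Z)$. By the universal coefficient theorem, $H_2(Y_K;\mathbb Z/2)=H_2(Y_K;\mathbb Z)\otimes\mathbb Z/2$ and $H^2(Y_K;\mathbb Z/2)=\operatorname{Hom}(H_2(Y_K;\mathbb Z),\mathbb Z/2)$, since the $\operatorname{Tor}$ and $\operatorname{Ext}$ terms vanish. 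Every mod-two class is therefore the reduction of $p[\Sigma_2]+q[F']$. Wu's formula $w_2\cup x=x\cup x$ for $x\in H^2(\,\cdot\,;\mathbb Z/2)$ on the closed oriented $4$-manifold $Y_K$, together with the evenness above, gives $w_2=0$.

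As an independent check, one can verify directly that $w_2$ vanishes on the two basis surfaces. Both $\Sigma_2$ and $F'$ are embedded with trivial normal bundle, and $Y_K$ is symplectic, so $\langle w_2,[S]\rangle\equiv\langle c_1,[S]\rangle\equiv\chi(S)+S\cdot S\pmod 2$ by adjunction. For $\Sigma_2$ this is $-2+0\equiv0$, and for $F'$ it is $0+0\equiv0$. This agrees with the Wu computation.

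The main point to watch is the mod-two step: for non-simply-connected manifolds, an even form does not by itself imply spin, because of $2$-torsion in $H_1$. That gap is closed exactly by the torsion-freeness recorded in Proposition~\ref{prop:H1YK}.
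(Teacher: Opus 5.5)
Your argument is correct and takes the same route as the paper, whose proof is the single observation that $Q_{Y_K}\cong H$ is even. You additionally supply the step the paper leaves tacit: since $Y_K$ is not simply connected, evenness implies $w_2=0$ only when $H_1$ has no $2$-torsion. You take this from the torsion-freeness in Proposition~\ref{prop:H1YK}, via the universal coefficient theorem and Wu's formula.
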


\begin{proof}
Its intersection form is even.
\end{proof}

\paragraph{The final manifold $Z_K(B_1,B_2)$.}

The two rim tori $R_y,R_d$ lie in a collar of
$\partial\nu\Sigma_2$. The core surface $\Sigma_2$ is disjoint from this
collar. The parallel fiber $F'$ can be chosen so that its unique intersection
point with $\Sigma_2$ lies away from $y\cup d$. Therefore both $\Sigma_2$
and $F'$ are disjoint from the two torus neighborhoods removed in the Boyle
replacement.

Consequently the same embedded surfaces survive in
\[
Z_K(B_1,B_2)
\]
and still satisfy
\[
\Sigma_2^2=(F')^2=0,\qquad
\Sigma_2\cdot F'=1.
\]
Since
\[
H_2(Z_K;\mathbb Z)\cong\mathbb Z^2,
\]
they form a basis of the full second homology:
\[
H_2(Z_K;\mathbb Z)
=
\mathbb Z[\Sigma_2]\oplus\mathbb Z[F'].
\]

Thus the homeomorphism theorem is represented geometrically by an explicit
genus-two surface and an explicit torus.

\paragraph{Geometric representatives and genus bounds.}
\label{sec:minimal-genus}

For the $Y_K$ replacement the second homology is generated by the two surviving
classes
\[
A=[\Sigma_2],\qquad B=[F'],
\]
with
\[
A^2=B^2=0,\qquad A\cdot B=1.
\]
The construction supplies an embedded genus-two representative of $A$ and an
embedded torus representative of $B$. Consequently
\[
g_{\min}(A)\le2,\qquad g_{\min}(B)\le1.
\]

\paragraph{The dual genus-two basis in the original $X_K$.}

The original symplectic construction of $X_K$ contains a useful comparison
configuration. Akhmedov \cite[Lemma~4.1]{Akh07} identifies a basis
\[
[S],[T]\in H_2(X_K;\mathbb Z)
\]
represented by embedded genus-two surfaces satisfying
\[
S^2=T^2=0,\qquad S\cdot T=1.
\]
Here $S$ is a parallel copy of the genus-two surface used in the second
fiber sum, while $T$ is the new genus-two surface obtained by gluing two
punctured genus-one surfaces. Thus
\[
Q_{X_K}\cong H.
\]
The same paper computes
\[
K_{X_K}=2S+2T,
\qquad
K_{X_K}^2=8.
\]

This is a concrete dual genus-two configuration in the original
symplectic cohomology $S^2\times S^2$. By contrast, the displayed basis
$([\Sigma_2],[F'])$ in the present $Y_K$ replacement has representatives
of genera $(2,1)$.

\subsection{Symplectic and adjunction consequences}
\label{sec:symplectic-status}

The symplectic information from the original knot-surgery blocks does not
automatically pass through the replacement.

Akhmedov \cite[Section~4]{Akh07} states that, for a genus-one fibered knot $K$,
the manifold
\[
M_K\times S^1
\]
is symplectic. Both its torus fiber and torus section are symplectically
embedded and have self-intersection zero. Gompf's theorem therefore gives a
symplectic structure on $Y_K$, and the genus-two surface $\Sigma_2$ is
symplectic. The parallel fiber $F'$ may also be chosen symplectic. A second
Gompf sum gives the symplectic manifold $X_K$. The same paper states, using
Usher's minimality theorem, that both $Y_K$ and $X_K$ are minimal symplectic
manifolds. These statements are part of the original construction
\cite[Section~4]{Akh07}.

\begin{proposition}\label{prop:symplectic-status}
For a genus-one fibered knot $K$:
\begin{enumerate}
\item $M_K\times S^1$, $Y_K$, and $X_K$ are symplectic;
\item $Y_K$ and $X_K$ are minimal symplectic;
\item in $Y_K$, the genus-two surface $\Sigma_2=T_1\#T_2$ is symplectic,
  and a parallel torus fiber $F'$ can be chosen symplectic;
\item the present argument does not prove that $Z_K(B_1,B_2)$ is symplectic.
\end{enumerate}
\end{proposition}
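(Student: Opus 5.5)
The proposition collects four items. Items (1)--(3) are recalled from the original construction, and item (4) is a statement about the limits of the present argument. I plan to treat them in that order, relying on \cite[Section~4]{Akh07} together with the symplectic sum theorem of Gompf and McCarthy--Wolfson \cite{Gompf95,McCarthyWolfson94}.

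For (1), I will first establish that $M_K\times S^1$ is symplectic, in either of two ways. The first is Thurston's construction: $M_K\times S^1$ is a torus bundle over a torus whose fiber is homologically essential, since the fiber is dual to the section. The second is the Luttinger-surgery presentation of Subsection~\ref{subsec:MK-luttinger}, starting from $T^4$. Either way, the torus fiber $F$ and the section $T_m=m\times S^1$ are symplectic of square zero. After a rescaling of the form I can match their areas, and Gompf's theorem then makes $Y_K=(M_K\times S^1)\#_{F=T_m}(M_K\times S^1)$ symplectic.

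For (3), the leftover section $T_1$ and fiber $T_2$ meet the glued tori transversely and positively. The surface $\Sigma_2=T_1\#T_2$ can then be taken symplectic by the standard feature of the symplectic sum that section and fiber pieces glue to a symplectic surface, as stated in \cite{Akh07}. A parallel fiber $F'$ in the bundle region is a fiber of the symplectic fibration, hence also symplectic. Since $\Sigma_2$ is symplectic of square zero, a second Gompf sum with the involution $\phi$ gives $X_K$ symplectic, which completes (1).

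For (2), I will invoke Usher's theorem: a symplectic sum along surfaces of positive genus is minimal unless one summand is a blow-up of a ruled surface with the gluing surface a section or fiber in a specific configuration. The summands $M_K\times S^1$ are symplectically aspherical torus bundles and hence minimal and not rational or ruled, so $Y_K$ is minimal. Similarly, $Y_K$ is minimal and not ruled: it has $b_1=2$ and $b^+=1$ by Propositions~\ref{prop:H1YK} and \ref{prop:H2YKbasis}, and its summands are aspherical. Hence $X_K$ is minimal as well. The main obstacle here is checking that Usher's exceptional cases are excluded. Rather than re-deriving this, I would cite it as in \cite{Akh07}.

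Item (4) is not a mathematical assertion about $Z_K$ but a statement of scope. To support it, I will note that the marked replacement is not a symplectic operation. The rim tori $R_y,R_d$ are null-homologous in $Y_K$, because they bound $D^2\times\mu$-type chains via $\mu$ being a commutator, so they cannot be symplectic. Moreover, the Boyle and Matsumoto--Fukaya exteriors $C_T$ carry no symplectic structure with convex or concave boundary that would be compatible with the gluing. Consequently neither Gompf's theorem nor Luttinger surgery applies, and nothing in the argument produces a symplectic form on $Z_K(B_1,B_2)$.
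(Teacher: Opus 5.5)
Your proposal is correct and follows essentially the same route as the paper, whose proof simply attributes (1)--(3) to the torus-bundle geometry, the two Gompf sums, and Usher's minimality theorem in \cite[Section~4]{Akh07}, and treats (4) as the observation that no symplectic form is constructed over the inserted exteriors. Your extra support for (4) should be trimmed: the claim that $C_T$ carries no symplectic structure with convex or concave boundary is asserted without proof and is not needed, and it would not exclude a symplectic form on $Z_K(B_1,B_2)$ anyway, since the separating $T^3$ need not be of contact type; also, $R_y$ is null-homologous in $Y_K$ because $\mu$ bounds a normal disk to $\Sigma_2$, so $R_y=\partial(y\times D^2)$, not because $\mu$ is a commutator.
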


\begin{proof}
The first three statements follow from the torus-bundle geometry and
Gompf-sum construction in \cite[Section~4]{Akh07}; minimality is stated
there using Usher's theorem. No symplectic extension over the marked
Boyle exteriors is constructed here.
\end{proof}

The replacements leave neighborhoods of $\Sigma_2$ and $F'$ untouched,
but extending the original symplectic form across the Boyle exteriors is a
global problem. No such extension is established here.

The Luttinger-surgery constructions of \cite{AP08,AP10} provide genuine
symplectic comparison examples because Luttinger surgery on a Lagrangian torus
preserves symplecticity. This does not imply that the Boyle replacement is
symplectic.

\paragraph{An adjunction obstruction.}
\label{sec:adjunction-obstruction}

The geometric basis of $H_2(Y_K)$ gives more information than the parity of the
intersection form. It also determines the canonical class of the symplectic structure
used in the original knot-surgery construction.

Let $\omega_Y$ denote the symplectic form on $Y_K$ supplied by Gompf's symplectic
sum construction. As recalled above, the genus-two surface $\Sigma_2$ and the
parallel torus fiber $F'$ can be chosen symplectic for this structure.

\begin{proposition}\label{prop:KY}
With respect to the basis
\[
H_2(Y_K;\mathbb Z)=
\mathbb Z[\Sigma_2]\oplus\mathbb Z[F'],
\]
the Poincare dual of the symplectic canonical class is
\[
PD(K_{Y_K})=2[F'].
\]
In particular,
\[
K_{Y_K}^2=0.
\]
\end{proposition}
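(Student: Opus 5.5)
We need to show $PD(K_{Y_K})=2[F']$, and we will compute it through the adjunction formula. The basis $([\Sigma_2],[F'])$ is unimodular by Proposition~\ref{prop:H2YKbasis}. So $PD(K)=p[\Sigma_2]+q[F']$ is determined by the two numbers $K\cdot[\Sigma_2]$ and $K\cdot[F']$. Because the form is $H$, these give $K\cdot F'=p$ and $K\cdot\Sigma_2=q$.

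Both surfaces are symplectically embedded for $\omega_Y$ by Proposition~\ref{prop:symplectic-status}(3). The adjunction equality for symplectic surfaces, $2g(S)-2=S^2+K\cdot S$, therefore applies to each.

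\emph{The torus.} For $F'$ we have $g=1$ and $(F')^2=0$, so $K\cdot F'=0$, which gives $p=0$.

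\emph{The genus-two surface.} For $\Sigma_2$ we have $g=2$ and $\Sigma_2^2=0$, so $K\cdot\Sigma_2=2$, which gives $q=2$.

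Hence $PD(K_{Y_K})=2[F']$, and $K^2=4(F')^2=0$.

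We should also check consistency with characteristic-class constraints. $K$ must be characteristic, and since $Q$ is even this means $K\equiv0$ mod $2$; the class $2[F']$ satisfies this. Independently, for a symplectic manifold $K^2=2e+3\sigma$, and with $e=\sigma=0$ this gives $0$, matching $K^2=0$.

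The only step needing care is the input that $\Sigma_2$ and $F'$ are simultaneously symplectic for one and the same form $\omega_Y$, with their orientations induced from $\omega_Y$. Only then do the signs in the adjunction equality match the orientations used in $\Sigma_2\cdot F'=+1$. This follows from the Gompf-sum construction. $F'$ is a fiber in the symplectic torus-bundle region of the first summand, away from the sum locus. $\Sigma_2$ is built from symplectic pieces $T_1$ and $T_2$, joined through the neck symplectically as in \cite{Akh07}. In addition, the symplectic orientations make the single transverse intersection of $T_1$ and $F'$ positive, so $\Sigma_2\cdot F'=+1$ and not $-1$.

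As a cross-check, the Gompf sum formula gives $K_{Y_K}=K_1+K_2+2[\text{sum torus}]$-type contributions. Both summands $M_K\times S^1$ have $K=0$, so $K_{Y_K}$ is twice a class supported near the rim and fiber. This agrees with $2[F']$ up to rim-tori classes, which vanish here because $b_2=2$.
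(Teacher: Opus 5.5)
Your proposal is correct and is essentially the paper's proof: adjunction on the symplectic genus-two surface $\Sigma_2$ and the symplectic torus $F'$ gives $K_{Y_K}\cdot\Sigma_2=2$ and $K_{Y_K}\cdot F'=0$, and the hyperbolic form turns these into $PD(K_{Y_K})=2[F']$, which the paper likewise checks against $c_1^2(Y_K)=0$. Your closing Gompf-sum cross-check is only heuristic: the sum formula is stated loosely, and $b_2=2$ by itself does not force rim-torus classes to vanish. The main argument does not depend on it.
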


\begin{proof}
For an embedded symplectic surface $C$ in a symplectic $4$-manifold, the adjunction
formula gives
\[
2g(C)-2=C^2+K\cdot C.
\]
Applying this to $\Sigma_2$, which has genus two and square zero, gives
\[
K_{Y_K}\cdot\Sigma_2=2.
\]
Applying it to the square-zero symplectic torus $F'$ gives
\[
K_{Y_K}\cdot F'=0.
\]

Write
\[
PD(K_{Y_K})=A[\Sigma_2]+B[F'].
\]
Since the intersection matrix in this basis is $H$,
\[
K_{Y_K}\cdot F'=A,\qquad
K_{Y_K}\cdot\Sigma_2=B.
\]
Thus $A=0$ and $B=2$, proving the formula. Its square is zero. This agrees with
the independent calculation $c_1^2(Y_K)=0$ in \cite[Section~4]{Akh07}.
\end{proof}

The calculation obstructs the direct attempt to make the
two-Boyle replacement symplectic.

\begin{theorem}\label{thm:no-extension}
There is no symplectic form on $Z_K(B_1,B_2)$ for which the two surviving
embedded surfaces $\Sigma_2$ and $F'$ are both symplectic with the orientations
for which
\[
\Sigma_2\cdot F'=1.
\]
Thus the original fiber-sum symplectic form on the unchanged region cannot
extend over the two Boyle exteriors while keeping both surfaces symplectic.
\end{theorem}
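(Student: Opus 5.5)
Suppose, for contradiction, that $\omega$ is a symplectic form on $Z=Z_K(B_1,B_2)$ for which both $\Sigma_2$ (genus two) and $F'$ (a torus) are symplectic, oriented so that $\Sigma_2\cdot F'=1$. The plan is to repeat the adjunction computation of Proposition~\ref{prop:KY} inside $Z$ and compare the result with the constraint that $Z$ is a simply connected manifold with $e=4$ and $\sigma=0$. By Theorem~\ref{thm:BoyleS2S2} and the discussion of $Z_K(B_1,B_2)$ above, the classes $[\Sigma_2],[F']$ form a basis of $H_2(Z;\mathbb Z)$ with intersection matrix $H$. Write $PD(K_\omega)=A[\Sigma_2]+B[F']$.

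First apply the adjunction formula $2g-2=C^2+K\cdot C$ to both surfaces. For $C=F'$ we get $K\cdot F'=0$, and for $C=\Sigma_2$ we get $K\cdot\Sigma_2=2$. Exactly as in Proposition~\ref{prop:KY}, this forces $A=0$ and $B=2$, so $PD(K_\omega)=2[F']$ and $K_\omega^2=0$.

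Next use the Wu/Hirzebruch constraint for an almost complex structure, $c_1^2=2e+3\sigma$. Since $e(Z)=4$ and $\sigma(Z)=0$, this gives $K_\omega^2=8$, which contradicts $K_\omega^2=0$. So no such $\omega$ exists, and the second sentence of the statement follows immediately: any extension of the fiber-sum form keeping both surfaces symplectic would be such an $\omega$. Notice the contrast with $Y_K$ itself. There $e=0$, so $K^2=0$ is consistent; the extra $+2$ in Euler characteristic from each Boyle exterior is precisely what breaks compatibility.

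The main thing to check carefully is orientation. The adjunction formula needs the surfaces oriented by $\omega$, and the statement fixes orientations with $\Sigma_2\cdot F'=1$. So I would verify that the $\omega$-orientations of the surfaces can be taken to be those orientations, which the hypothesis asserts. I would also confirm that $\omega$ induces the given orientation of $Z$ (otherwise $\sigma$ changes sign, but here $\sigma=0$ and $e$ is unchanged, so the contradiction survives either way). Beyond these sign checks the argument is purely numerical.
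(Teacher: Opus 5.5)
Your proof is correct and is essentially the paper's argument. Adjunction on $\Sigma_2$ and $F'$, in the hyperbolic basis of $H_2(Z;\mathbb Z)$, forces $PD(K_\omega)=2[F']$ and hence $K_\omega^2=0$, which contradicts $c_1^2=2e+3\sigma=8$. Your orientation concerns are harmless, since $K_\omega\cdot F'=0$ alone puts $PD(K_\omega)$ in $\mathbb Z[F']$, so $K_\omega^2=0$ whichever sign $\Sigma_2\cdot F'$ takes.
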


\begin{proof}
Suppose such a symplectic form existed. The two surfaces would satisfy
\[
\Sigma_2^2=(F')^2=0,\qquad
\Sigma_2\cdot F'=1.
\]
The adjunction formula would therefore give
\[
K_{Z_K}\cdot\Sigma_2=2,\qquad
K_{Z_K}\cdot F'=0.
\]
Since these two classes form a basis of $H_2(Z_K;\mathbb Z)$, the same calculation as
in Proposition~\ref{prop:KY} would give
\[
PD(K_{Z_K})=2[F']
\]
and hence
\[
K_{Z_K}^2=0.
\]

On the other hand, every closed almost-complex $4$-manifold satisfies
\[
c_1^2=2e+3\sigma.
\]
For $Z_K$ we have $e=4$ and $\sigma=0$, so any symplectic structure would satisfy
\[
K_{Z_K}^2=c_1^2(Z_K)=8,
\]
a contradiction.
\end{proof}

\begin{remark}
Theorem~\ref{thm:no-extension} only obstructs symplectic structures for
which the displayed surfaces $\Sigma_2$ and $F'$ are simultaneously
symplectic. It does not rule out other symplectic structures on
$Z_K(B_1,B_2)$.
\end{remark}

\paragraph{Dual symplectic generators.}
\label{sec:dual-genus-two}

\begin{theorem}\label{thm:dual-genus-two}
Let $(X,\omega)$ be a closed symplectic $4$-manifold homeomorphic to
$S^2\times S^2$. Suppose that a hyperbolic basis
\[
A,B\in H_2(X;\mathbb Z),\qquad
A^2=B^2=0,\qquad A\cdot B=1,
\]
is represented by embedded symplectic surfaces $C_A,C_B$. Then
\[
\bigl(g(C_A),g(C_B)\bigr)=(0,0)\quad\text{or}\quad(2,2).
\]
If $X$ is not diffeomorphic to the standard $S^2\times S^2$, then necessarily
\[
g(C_A)=g(C_B)=2.
\]
\end{theorem}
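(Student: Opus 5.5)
The plan is to run the adjunction argument of Theorem~\ref{thm:no-extension} in reverse, using the two numerical facts that $K^2=c_1^2=2e+3\sigma=8$ and that the intersection form is even.

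First, compute the canonical class. Since $X$ is homeomorphic to $S^2\times S^2$, we have $e=4$ and $\sigma=0$, so $K^2=8$. Put $g_A=g(C_A)$ and $g_B=g(C_B)$. Adjunction on the square-zero symplectic surfaces gives
\[
K\cdot A=2g_A-2,\qquad K\cdot B=2g_B-2.
\]
As in Proposition~\ref{prop:KY}, writing $PD(K)=pA+qB$ and using the hyperbolic form gives $p=K\cdot B$ and $q=K\cdot A$. Therefore
\[
K^2=2pq=2(2g_A-2)(2g_B-2)=8(g_A-1)(g_B-1).
\]
Setting this equal to $8$ yields $(g_A-1)(g_B-1)=1$. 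Since genera are nonnegative integers, the only solutions are $g_A-1=g_B-1=\pm1$, that is, $(0,0)$ or $(2,2)$. This proves the first assertion.

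For the second assertion, suppose $g_A=g_B=0$. Then $C_A$ is an embedded symplectic sphere of square zero. McDuff's classification of symplectic manifolds containing such a sphere (ruled surfaces, via Gromov's pseudoholomorphic curve theory \cite{Gromov85}) shows that $X$ is a symplectic $S^2$-bundle over a surface, possibly blown up. Because $X$ is simply connected with $b_2=2$ and an even form, it must be an $S^2$-bundle over $S^2$ with even form, hence diffeomorphic to $S^2\times S^2$. Consequently, if $X$ is exotic, the case $(0,0)$ is excluded and $(2,2)$ is forced.

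The main obstacle is this last step. It depends on citing McDuff's theorem precisely, in particular the version for a square-zero embedded symplectic sphere, which gives a diffeomorphism to a ruled surface. The genus computation, by contrast, is routine once the identity $K^2=2e+3\sigma$ is in hand.
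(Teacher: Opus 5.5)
Your proof is correct and follows the paper's argument. It uses $K^2=2e+3\sigma=8$, then adjunction on the two square-zero symplectic surfaces, and finally McDuff's classification to exclude an exotic structure in the $(0,0)$ case. The only difference is that you substitute the adjunction values directly into $K^2=2pq$, which gives $(g_A-1)(g_B-1)=1$ immediately and makes the paper's step about characteristic classes of $H$ having even coefficients unnecessary.
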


\begin{proof}
Write
\[
PD(K_\omega)=aA+bB
\]
for the symplectic canonical class. Since $X$ has
\[
e(X)=4,\qquad \sigma(X)=0,
\]
we have
\[
K_\omega^2=c_1^2(X)=2e(X)+3\sigma(X)=8.
\]
Because the intersection form is $H$,
\[
K_\omega^2=2ab,
\]
so
\[
ab=4.
\]

The class $K_\omega$ is characteristic. For the even form $H$, every
characteristic element has even coefficients, hence $a$ and $b$ are even.
Adjunction for the two symplectic surfaces gives
\[
2g(C_A)-2=K_\omega\cdot A=b,
\qquad
2g(C_B)-2=K_\omega\cdot B=a.
\]
Therefore $a,b\ge-2$. The only even integer solutions of
\[
ab=4,\qquad a,b\ge-2
\]
are
\[
(a,b)=(2,2)\quad\text{and}\quad(a,b)=(-2,-2).
\]
These give respectively
\[
(g(C_A),g(C_B))=(2,2)
\quad\text{and}\quad
(g(C_A),g(C_B))=(0,0).
\]

In the second case $X$ contains a symplectically embedded sphere of square zero.
By McDuff's classification of symplectic $4$-manifolds containing a symplectic sphere
of nonnegative self-intersection \cite{McDuff90}, $X$ is rational or ruled.
Since $X$ is simply connected, has $b_2=2$, and has even intersection form $H$,
its underlying smooth manifold is the trivial $S^2$-bundle over $S^2$.
Hence it is diffeomorphic to the standard $S^2\times S^2$.

Thus an exotic smooth structure admitting such a dual symplectic basis can only
occur in the $(2,2)$ case.
\end{proof}

\begin{remark}
The conclusion applies when a hyperbolic basis has symplectic
representatives. For $Z_K(B_1,B_2)$ the displayed basis
$(\Sigma_2,F')$ has genera $(2,1)$, so these two surfaces cannot be
simultaneously symplectic.
\end{remark}

\subsection{Minimal genus and a symplectic standardness criterion}
\label{sec:min-genus-symplectic}

The visible square-zero torus in the $S^2\times S^2$ replacement gives a
strong restriction on possible symplectic structures. We first isolate the
general $b_2^+=1$ statement.

\begin{proposition}\label{prop:min-genus-H}
Let $(X,\omega)$ be a closed symplectic $4$--manifold homeomorphic to
$S^2\times S^2$, and let $A\in H_2(X;\mathbb Z)$ be a primitive class with
\[
A^2=0.
\]
If the symplectic canonical class satisfies
\[
K_\omega\cdot[\omega]>0,
\]
then every smoothly embedded connected surface representing $A$ has genus at
least two:
\[
g_{\min}(A)\ge 2.
\]
\end{proposition}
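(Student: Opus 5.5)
The plan is to reduce the genus bound to a single adjunction inequality. The inequality I would use is the Li--Liu symplectic adjunction inequality for $b_2^+=1$ manifolds that are not rational or ruled. It should then suffice to show that $K_\omega\cdot A\neq0$, and in fact that $|K_\omega\cdot A|\ge2$.

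The lattice step comes first. Since $A$ is primitive and the intersection form is the even unimodular form $H$, there is a class $B'$ with $A\cdot B'=1$. Replacing $B'$ by $B=B'-\tfrac12(B'^2)A$, which is legitimate because $B'^2$ is even, gives a hyperbolic basis $A,B$ with $A^2=B^2=0$ and $A\cdot B=1$. Write $PD(K_\omega)=aA+bB$. As in the proof of Theorem~\ref{thm:dual-genus-two}, we have $K_\omega^2=2e+3\sigma=8$, so $2ab=8$ and $ab=4$. Because $K_\omega$ is characteristic for the even form $H$, both $a$ and $b$ are even. Hence $K_\omega\cdot A=b\in\{\pm2,\pm4\}$, and in particular $|K_\omega\cdot A|\ge2$.

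Next, I would show that $X$ is neither rational nor ruled and is minimal. Minimality is automatic: an exceptional class would have square $-1$, which is impossible in the even form $H$. For the other part I would invoke Liu's characterization: a minimal symplectic $4$--manifold with $b_2^+=1$ is rational or ruled exactly when $K_\omega\cdot[\omega]<0$. So the hypothesis $K_\omega\cdot[\omega]>0$ excludes these cases. Taubes' theorem then says the Seiberg--Witten invariant of $K_\omega$ is $\pm1$ in the symplectic chamber. Because $b_1=0$ and $K_\omega\cdot[\omega]>0$, the Li--Liu adjunction inequality applies. It states that every embedded connected surface $\Sigma$ of genus $g\ge1$ with $[\Sigma]^2\ge0$ satisfies
\[
2g-2\;\ge\;[\Sigma]^2+|K_\omega\cdot[\Sigma]|.
\]
Applied to a representative of $A$, this gives $2g-2\ge0+|b|\ge2$, hence $g\ge2$.

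The genus-zero case is reduced to genus one: tubing a sphere representative of $A$ to a small null-homologous torus gives an embedded torus in the same class. The genus-one bound then yields the contradiction $0\ge2$. So no representative of genus $0$ or $1$ exists, and $g_{\min}(A)\ge2$.

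I expect the main obstacle to be the $b_2^+=1$ chamber issue. The adjunction inequality must be applied with the wall-crossing handled correctly, that is, in the chamber determined by $\omega$. This is exactly where the hypothesis $K_\omega\cdot[\omega]>0$ enters, so I would cite the Li--Liu theorem in the form that requires only that $X$ not be rational or ruled, rather than redoing the wall-crossing analysis. A secondary point to check is that a smoothly embedded, not necessarily symplectic, surface is allowed in that theorem; it is, since the inequality is a Seiberg--Witten statement.
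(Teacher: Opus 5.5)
Your proposal is correct and follows essentially the same route as the paper: a hyperbolic partner $B$ for $A$, then $K_\omega^2=8$ together with the evenness of characteristic classes forces $|K_\omega\cdot A|=2$ (in fact $b=\pm2$ exactly, not $\pm4$), and then the Taubes chamber plus the Li--Liu $b_2^+=1$ adjunction inequality finishes the argument. Your extra steps (minimality, excluding the rational case, tubing to reduce genus zero to genus one) make explicit what the paper leaves implicit, but Liu's criterion is not an ``exactly when'' in general, since minimal ruled surfaces over a base of genus at least two can have $K_\omega\cdot[\omega]>0$; you need simple connectivity of $X$, so that a minimal rational or ruled $X$ would be $S^2\times S^2$, where every symplectic form has $K_\omega\cdot[\omega]<0$.
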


\begin{proof}
Choose $B\in H_2(X;\mathbb Z)$ with
\[
A^2=B^2=0,\qquad A\cdot B=1.
\]
Since $Q_X\cong H$, every characteristic class has even coefficients in this
basis. Write
\[
PD(K_\omega)=2pA+2qB.
\]
The almost-complex identity gives
\[
K_\omega^2=2e(X)+3\sigma(X)=8,
\]
hence
\[
8pq=8,\qquad pq=1.
\]
Thus
\[
(p,q)=(1,1)\quad\text{or}\quad(-1,-1),
\]
and in particular
\[
|K_\omega\cdot A|=2.
\]

Because $b_2^+(X)=1$, the Seiberg--Witten chamber must be specified. Under
the hypothesis $K_\omega\cdot[\omega]>0$, the class $-K_\omega$ lies in the
Taubes chamber \cite{Taubes94,Taubes95} in which
\[
SW^-_X(-K_\omega)=\pm1.
\]
For $b_1(X)=0$ and $b_2^-(X)=1$, the $b_2^+=1$ generalized adjunction
inequality of Li--Liu \cite{LLadj} applies to any connected embedded surface
$\Sigma$ representing $A$ with $A^2\ge0$. Therefore
\[
2g(\Sigma)-2
\ge
A^2+\bigl|K_\omega\cdot A\bigr|
=2,
\]
so $g(\Sigma)\ge2$.
\end{proof}

\begin{theorem}\label{thm:symplectic-implies-standard}
Let $Z$ be any of the manifolds obtained in Case II by two admissibly marked
replacements of the trefoil block $Y_K$. In particular, $Z$ may be a Boyle
or a Matsumoto--Fukaya replacement. If $Z$ admits a symplectic structure,
then
\[
\boxed{Z\cong S^2\times S^2.}
\]
Consequently, if such a $Z$ is exotic, then it is nonsymplectic.
\end{theorem}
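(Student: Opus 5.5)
Suppose $Z$ carries a symplectic form $\omega$. The plan is to use the surviving square-zero torus $F'$ to split into cases according to the sign of $K_\omega\cdot[\omega]$. By Theorem~\ref{thm:general-replacement}, $Z$ is simply connected with $Q_Z\cong H$, $e(Z)=4$ and $\sigma(Z)=0$. Moreover the class $B=[F']$ is primitive with $B^2=0$: it is part of the hyperbolic basis $([\Sigma_2],[F'])$. It is also represented by a smoothly embedded torus, so $g_{\min}(B)\le 1$.

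\textbf{Case $K_\omega\cdot[\omega]>0$.} Proposition~\ref{prop:min-genus-H} applies to $A=B$. It gives $g_{\min}(B)\ge2$, which contradicts the embedded torus $F'$. So this case cannot occur. This is the heart of the argument, and it is already packaged in Proposition~\ref{prop:min-genus-H}.

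\textbf{Case $K_\omega\cdot[\omega]<0$.} Here I would invoke Liu's theorem (building on Taubes and McDuff): a minimal symplectic $4$-manifold with $K_\omega\cdot[\omega]<0$ is rational or ruled. I would also need a general fact: if a symplectic manifold is non-minimal, i.e.\ contains a symplectic $(-1)$-sphere, its intersection form contains a class of square $-1$. Since $Q_Z\cong H$ is even, $Z$ is minimal. Then $Z$ is rational or ruled, and being simply connected it is rational. A rational surface with even form and $b_2=2$ is $S^2\times S^2$. This matches the standardness step in the proof of Theorem~\ref{thm:dual-genus-two}.

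\textbf{Case $K_\omega\cdot[\omega]=0$.} This is the case I expect to be the main obstacle. Write $PD(K_\omega)=2pA+2qB$ with $pq=1$, as in the proof of Proposition~\ref{prop:min-genus-H}. Then $K_\omega\neq0$ and $K_\omega^2=8>0$. Since $b_2^+=1$, the positive cone is a light cone. Both $K_\omega$ and $[\omega]$ have positive square, so their pairing cannot vanish: in $H^2\cong\mathbb R^{1,1}$, two positive-square vectors have nonzero product, lying in the same or opposite sheets. Hence this case does not occur either.

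Finally, since $b_2^+(Z)=1$, the Taubes chamber must be read off in the convention of Proposition~\ref{prop:min-genus-H}. That proposition is used as stated, so the only new inputs are Liu's rational/ruled criterion and the parity argument for minimality. The consequence for exotic $Z$ is the contrapositive.
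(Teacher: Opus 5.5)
Your proposal is correct and follows the paper's proof essentially step for step. Like the paper, you rule out $K_\omega\cdot[\omega]>0$ by Proposition~\ref{prop:min-genus-H} applied to the primitive torus class $[F']$, rule out $K_\omega\cdot[\omega]=0$ by the signature-$(1,1)$ light-cone argument using $K_\omega^2=8$, and conclude from Liu's theorem when $K_\omega\cdot[\omega]<0$; your one addition, that the even form $H$ forces minimality so Liu's theorem for minimal manifolds applies directly, is a small point the paper leaves implicit.
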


\begin{proof}
By Theorem~\ref{thm:general-replacement},
\[
\pi_1(Z)=1,\qquad Q_Z\cong H,
\]
so $Z$ is homeomorphic to $S^2\times S^2$. Moreover, the construction
contains a square-zero torus $F'$ and a genus-two surface $\Sigma_2$ with
\[
(F')^2=\Sigma_2^2=0,\qquad
F'\cdot\Sigma_2=1.
\]
Hence $[F']$ is primitive.

Assume that $(Z,\omega)$ is symplectic and let $K_\omega$ be its canonical
class. If
\[
K_\omega\cdot[\omega]>0,
\]
Proposition~\ref{prop:min-genus-H} applied to $A=[F']$ says that every
embedded representative of $[F']$ has genus at least two. This contradicts
the embedded torus $F'$. Therefore
\[
K_\omega\cdot[\omega]\le0.
\]
Since
\[
K_\omega^2=2e(Z)+3\sigma(Z)=8>0
\]
and $[\omega]^2>0$ in a vector space of signature $(1,1)$,
$K_\omega\cdot[\omega]$ cannot vanish. Thus
\[
K_\omega\cdot[\omega]<0.
\]

By Liu's $b_2^+=1$ classification theorem \cite{Liu96}, a symplectic
$4$--manifold with $K_\omega\cdot[\omega]<0$ is rational or ruled. Since
$Z$ is simply connected and has even rank-two intersection form, the only
possible diffeomorphism type is
\[
S^2\times S^2.
\]
\end{proof}

\begin{corollary}\label{cor:exotic-nonsymplectic}
For the Case II family,
\[
Z\not\cong S^2\times S^2
\quad\Longrightarrow\quad
Z\ \text{admits no symplectic structure}.
\]
\end{corollary}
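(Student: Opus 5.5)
This corollary is the contrapositive of Theorem~\ref{thm:symplectic-implies-standard}, so the plan is to cite that theorem and check that its hypotheses hold for every member of the Case II family. Let $Z$ be obtained from the trefoil block $Y_K$ by two admissibly marked replacements along $R_y$ and $R_d$. Here each $C_i$ is a Boyle exterior, a Matsumoto--Fukaya exterior, or any other admissibly marked $\mathbb Z$--torus exterior. Suppose $Z$ admits a symplectic form $\omega$. We must show $Z\cong S^2\times S^2$, which contradicts the assumption $Z\not\cong S^2\times S^2$.

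The steps, in order:

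\begin{enumerate}
\item Theorem~\ref{thm:general-replacement} gives $\pi_1(Z)=1$ and $Q_Z\cong H$. So $e(Z)=4$, $\sigma(Z)=0$, and $K_\omega^2=2e+3\sigma=8$.
\item The construction keeps the embedded square-zero torus $F'$, which is dual to $\Sigma_2$. Hence $[F']$ is primitive with $[F']^2=0$.
\item If $K_\omega\cdot[\omega]>0$, Proposition~\ref{prop:min-genus-H} gives $g_{\min}([F'])\ge2$. This contradicts the existence of the torus $F'$.
\item Next we rule out $K_\omega\cdot[\omega]=0$. Both $K_\omega$ and $[\omega]$ have positive square in a form of signature $(1,1)$, so they lie in the positive cone and cannot be orthogonal.
\item Therefore $K_\omega\cdot[\omega]<0$. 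Liu's theorem \cite{Liu96} then says $(Z,\omega)$ is rational or ruled.
\item A simply connected rational or ruled manifold with $b_2=2$ and even form is $S^2\times S^2$. This is the required contradiction.
\end{enumerate}

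The main obstacle is the Seiberg--Witten chamber bookkeeping in step 3, since $b_2^+=1$. That step is already packaged inside Proposition~\ref{prop:min-genus-H}, which we take as given. So the only real check left is that Theorem~\ref{thm:symplectic-implies-standard} really covers every marked replacement. Its proof uses only the conclusions of Theorem~\ref{thm:general-replacement} and the surviving torus $F'$. Both are independent of which admissibly marked exteriors are inserted, because $F'$ lies in the unchanged central region. The corollary then follows immediately.
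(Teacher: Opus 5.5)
Your proposal is correct and takes the paper's route: the corollary is the contrapositive of Theorem~\ref{thm:symplectic-implies-standard}, and your steps are exactly that theorem's proof (rule out $K_\omega\cdot[\omega]>0$ via Proposition~\ref{prop:min-genus-H} and the surviving torus $F'$, rule out $K_\omega\cdot[\omega]=0$ by the signature-$(1,1)$ light-cone argument, then apply Liu's theorem and use the even rank-two form). The paper likewise extends the conclusion to the whole Case II family because $F'$ and $\Sigma_2$ lie in the unchanged region, so nothing depends on which admissibly marked exteriors are inserted.
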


The primitive square-zero torus is essential in this argument.

\section{Seiberg--Witten invariants and smooth questions}\label{sec:SW}

The embedded surfaces constructed in Case II give a direct
Seiberg--Witten calculation. We use the conventions and adjunction
inequalities recalled in
\cite[Section~3]{Akh07}: Theorem~3.6 there is the generalized adjunction
inequality for the small-perturbation invariant when $b_2^+=1$, and
Theorem~3.5 is the usual adjunction inequality when $b_2^+>1$.

\subsection{The Case II invariant}

Let $Z$ be a Case II manifold. Thus
\[
\pi_1(Z)=1,\qquad Q_Z\cong H,
\]
and there are embedded surfaces
\[
A=F',\qquad B=\Sigma_2
\]
with
\[
A^2=B^2=0,\qquad A\cdot B=1,\qquad
g(A)=1,\qquad g(B)=2.
\]
Since $b_2^+(Z)=b_2^-(Z)=1$, we use the small-perturbation
Seiberg--Witten invariant $SW_Z^\circ$ of \cite{Szabo96}; its chamber
independence in the present range is recalled in
\cite[Theorem~3.2]{Akh07}.

\begin{theorem}\label{thm:SW-vanishing}
For every Case II manifold $Z$,
\[
\boxed{SW_Z^\circ\equiv0.}
\]
\end{theorem}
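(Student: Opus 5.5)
The plan is to show that no characteristic class $k\in H^2(Z;\mathbb Z)$ with nonzero small-perturbation invariant can exist. We combine three ingredients: the dimension formula, the algebraic structure of characteristic elements for the form $H$, and the $b_2^+=1$ generalized adjunction inequality \cite[Theorem~3.6]{Akh07} applied to the two embedded surfaces $A=F'$ and $B=\Sigma_2$.

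\textbf{Step 1 (restricting the basic classes).} Write $PD(k)=2pA+2qB$. Every characteristic class of the even form $H$ has even coefficients in the hyperbolic basis, so $p,q\in\mathbb Z$. Since $b_1(Z)=0$, the moduli space has expected dimension
\[
d(k)=\tfrac14\bigl(k^2-(2e+3\sigma)\bigr)=\tfrac14(8pq-8)=2(pq-1).
\]
A nonzero invariant requires $d(k)\ge 0$ and even, so we need $pq\ge 1$. In particular $p$ and $q$ are nonzero and of the same sign.

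\textbf{Step 2 (adjunction on the torus).} The generalized adjunction inequality for $SW^\circ$ with $b_2^+=1$, applied to the square-zero embedded torus $F'=A$, gives
\[
2g(A)-2=0\ \ge\ A^2+|k\cdot A|=|2q|.
\]
This forces $q=0$, contradicting $pq\ge1$. Hence $SW_Z^\circ(k)=0$ for every $k$.

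For completeness one can also apply the same inequality to $\Sigma_2$. This gives $2\ge|2p|$, so $|p|\le1$; that bound is consistent with the rest of the argument but is not needed once $q=0$ is known.

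\textbf{Main obstacle.} The hard step is checking that \cite[Theorem~3.6]{Akh07} applies in this situation. Its hypotheses need to cover a square-zero, homologically essential, primitive torus in the $b_2^+=1$, $b_2^-=1$ range. We also need chamber independence of $SW^\circ$ (\cite[Theorem~3.2]{Akh07}), so that the inequality holds for every class rather than only in one chamber. Torus cases of adjunction inequalities often need separate treatment, with a strict inequality or an exclusion of genus one. If the cited theorem excludes $g=1$, we would first try to increase the genus of $F'$ by adding a trivial handle, so that it becomes a genus-two surface in the same class. Applying the inequality to that surface gives $2\ge|2q|$. Together with $|p|\le1$ from $\Sigma_2$, this leaves only $p=q=\pm1$. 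We would then exclude those two classes directly from the structure of the small-perturbation moduli space. The tool would be a wall-crossing or neck-stretching argument along $\partial\nu F'$, using that $F'$ is a primitive square-zero torus of genus one.
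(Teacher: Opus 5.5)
Your argument is essentially the paper's: the $b_2^+=1$ generalized adjunction inequality applied to the square-zero torus $F'$ forces $k\cdot[F']=0$. In your coordinates this is $q=0$; the paper phrases it as $PD(k)\in\mathbb Z[F']$, since $[F']$ is primitive with orthogonal complement $\mathbb Z[F']$ in $H$. Either way $k^2=0<8=2e+3\sigma$, so the formal dimension is negative.

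The contingency in your last paragraph is not needed, because the paper applies the cited inequality (for $g\ge1$, $[\Sigma]^2\ge0$) directly to the torus. Your fallback would in any case have to reprove that same genus-one case in order to rule out $k=\pm2([F']+[\Sigma_2])$.
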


\begin{proof}
Assume that $k$ is a small-perturbation basic class. By
\cite[Theorem~3.6]{Akh07}, equivalently the Li--Liu adjunction inequality,
the essential square-zero torus $A$ satisfies
\[
0=2g(A)-2\ge |k\cdot A|.
\]
Thus
\[
k\cdot A=0.
\]
Since $A$ is primitive and $Q_Z=H$, its orthogonal complement in
$H_2(Z;\mathbb Z)$ is exactly $\mathbb Z A$. Hence
\[
PD(k)=mA
\]
for some integer $m$, and therefore
\[
k^2=0.
\]
On the other hand a nonzero Seiberg--Witten invariant can occur only in
nonnegative formal dimension, whereas
\[
d(k)
=
\frac{k^2-(2e(Z)+3\sigma(Z))}{4}
=
\frac{0-8}{4}
=-2.
\]
This is impossible. Hence $SW_Z^\circ$ vanishes identically.
\end{proof}

\paragraph{The identity-glued rank-six family.}

The geometric basis from Proposition~\ref{prop:id-double-H2-basis} also
determines the Seiberg--Witten invariant of the Case III replacements.

\begin{theorem}\label{thm:SW-caseIII}
For every identity-glued Case III manifold
\[
\widehat X_K^{\mathrm{id}}(C_1,C_2),
\]
the Seiberg--Witten invariant vanishes:
\[
SW_{\widehat X_K^{\mathrm{id}}}\equiv0.
\]
Consequently these manifolds are not symplectic.
\end{theorem}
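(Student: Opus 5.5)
The plan mirrors the proof of Theorem~\ref{thm:SW-vanishing}, now in the range $b_2^+=3>1$, where the invariant is chamber independent and the ordinary adjunction inequality \cite[Theorem~3.5]{Akh07} applies to any essential embedded surface of nonnegative square. By Theorem~\ref{thm:id-rank6}, $\widehat X_K^{\mathrm{id}}(C_1,C_2)$ is simply connected with $e=8$, $\sigma=0$ and $Q\cong 3H$, so $b_2^+=3$ and $2e+3\sigma=16$. A basic class $k$ would therefore have formal dimension
\[
d(k)=\frac{k^2-16}{4},
\]
and must satisfy $k^2\ge16$. The goal is to show that every basic class is orthogonal to a Lagrangian (isotropic) half of the lattice, which forces $k^2=0$ and gives a contradiction.

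The first step is to identify embedded square-zero tori spanning an isotropic half of $3H$. Proposition~\ref{prop:id-double-H2-basis} supplies the rim tori $R_1,R_2$, which generate the isotropic half of the $2H$ summand. I would add a torus in the remaining $H$ summand $\langle S,T\rangle$. The natural candidate is the parallel fiber $F'$ in one copy of $Y_K$, which meets $S=\Sigma_2$ once. Since the $F_i'$ are punctured at their intersection with $\Sigma_2$, I would instead take a fiber $F''$ in the first copy disjoint from $\Sigma_2$ and homologous there to the fiber class. This torus lies in $Y_1^\circ$, away from the rim tori and the replacement regions, and has $F''\cdot S=1$. Its pairing with $T$ and with the $V_i$ may be nonzero, so I would compute these pairings and then change basis. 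The aim is three disjoint square-zero tori $R_1,R_2,F''$ whose classes span a rank-three isotropic primitive sublattice $L$, with $L^\perp=L$ in the unimodular form $3H$. All three are essential because each has a dual class ($B_1$, $B_2$, $S$) with intersection number one.

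Next I apply adjunction. For each torus $A\in\{R_1,R_2,F''\}$,
\[
0=2g(A)-2\ge A^2+|k\cdot A|=|k\cdot A|,
\]
so $k\cdot A=0$. Thus $PD(k)\in L^\perp=L$. Since $L$ is isotropic, $k^2=0<16$, and $d(k)=-4<0$. Hence there are no basic classes and $SW\equiv0$. Nonsymplecticity then follows from Taubes' theorem that $SW(K_\omega)=\pm1$ for a symplectic manifold with $b_2^+>1$.

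The main obstacle is the third torus. I must check that $F''$ really exists in the identity double disjoint from the rim tori $R_y,R_d$ used for the replacements and from $R_1,R_2$. I must also check that its class together with $R_1,R_2$ spans a primitive isotropic rank-three summand. Concretely, this means verifying $F''\cdot R_i=0$, which holds because they are disjoint, and $F''\cdot F''=0$. I also need the rank-three span to equal its own orthogonal complement; this holds if there are dual classes with a unimodular pairing matrix, namely $S,B_1,B_2$ adjusted by $R$'s and $F''$. If the fiber in the first copy meets the neck surfaces $V_i$ awkwardly, I would fall back on the punctured construction: take $F_1'$ pushed off $\Sigma_2$ within the product region of $M_K\times S^1$, which is possible since a fiber in the first copy can be chosen disjoint from the section $T_1$ only if it is not a fiber of that bundle. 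In that case I would use instead the torus dual to $T$ built from the second-copy section, and would decide between these by the explicit geometry of Section~\ref{sec:YK}.
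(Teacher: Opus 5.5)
\textbf{There is a genuine gap: the third torus.} Your overall plan is sound: adjunction on square-zero tori, then negative formal dimension, then Taubes. The gap is the step you flag as the main obstacle. The torus $F''$ cannot exist as you describe it.

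Intersection numbers are homological. Since $F'\cdot\Sigma_2=1$ in $Y_K$, no representative of the first-copy fiber class is disjoint from $\Sigma_2$. More generally, any closed surface contained in $Y_1^\circ$ has algebraic intersection zero with $S$. The reason is that $S$ is a parallel copy of the gluing surface, so it can be pushed across the neck into $Y_2^\circ$; equivalently, $[S]$ maps to zero in $H_2(Y_1^\circ,\partial Y_1^\circ)$. So ``$F''\subset Y_1^\circ$ with $F''\cdot S=1$'' is self-contradictory. Any class pairing nontrivially with $S$ must cross the neck, and the representative the construction gives for the dual of $S$ is the genus-two surface $T=(F_1'\setminus D^2)\cup(F_2'\setminus D^2)$.

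Your fallback does not close the gap. Its own parenthetical concedes that a first-copy fiber cannot avoid $T_1$. The ``torus dual to $T$ built from the second-copy section'' is never constructed, and no torus is exhibited in a class pairing nontrivially with $S$ or $T$. Without a third torus, your argument only gives $PD(k)\in\langle R_1,R_2\rangle^\perp=\langle S,T,R_1,R_2\rangle$. Writing $PD(k)=pS+qT+rR_1+uR_2$, this yields $k^2=2pq$, which is not yet bounded.

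\textbf{The missing idea.} You do not need the whole isotropic half to be spanned by tori. Apply the $b_2^+>1$ adjunction inequality directly to the essential square-zero genus-two surfaces $S$ and $T$. This gives $|q|=|k\cdot S|\le 2$ and $|p|=|k\cdot T|\le 2$. Hence $k^2=2pq\le 8<16$, so $d(k)=(k^2-16)/4\le -2$, and there are no basic classes. Your Taubes argument then gives nonsymplecticity as stated. This is exactly the paper's proof: the two rim tori kill the $B_1,B_2$ coordinates, and the two genus-two surfaces bound the remaining hyperbolic summand.
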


\begin{proof}
Use the decomposition of the intersection lattice supplied by
Proposition~\ref{prop:id-double-H2-basis} and
Lemma~\ref{lem:rim-dual-lattice}:
\[
H_2=
\langle S,T\rangle\oplus
\langle R_1,B_1\rangle\oplus
\langle R_2,B_2\rangle,
\]
with each summand hyperbolic.  The classes $R_1,R_2$ are represented by
essential square-zero tori, while $S,T$ are represented by genus-two
surfaces.

Let $k$ be a basic class and write
\[
PD(k)=pS+qT+rR_1+sB_1+uR_2+vB_2.
\]
Since $b_2^+=3>1$, the usual adjunction inequality applies.  Applied to
the two tori it gives
\[
k\cdot R_1=s=0,\qquad
k\cdot R_2=v=0.
\]
Hence
\[
k^2=2pq.
\]
Adjunction on $S$ and $T$ gives
\[
|k\cdot S|=|q|\le2,\qquad
|k\cdot T|=|p|\le2.
\]
Therefore
\[
k^2=2pq\le8.
\]
For the Case III manifolds,
\[
2e+3\sigma=16,
\]
so the formal dimension is
\[
d(k)=\frac{k^2-16}{4}\le-2.
\]
Since a basic class has nonnegative formal dimension, there are no
basic classes. Hence $SW\equiv0$.

If the manifold were symplectic, Taubes' nonvanishing theorem
\cite{Taubes94,Taubes95,Taubes96} would give a nonzero canonical basic
class because $b_2^+>1$, a contradiction.
\end{proof}

\subsection{Marked handle calculus and smooth standardness}
\label{sec:smooth-standardness}

The constructions above determine the relevant homeomorphism types, but
not their diffeomorphism types.  A handle-calculus approach would require
a marked description of the Boyle exterior in which the peripheral
classes
\[
(\mu_B,\alpha_B,\beta_B)
\]
and the prescribed boundary identifications are explicit.  The available
description does not provide this marked data in a form sufficient for the
closed handle calculation.  Thus smooth standardness of the homotopy
spheres and of the $S^2\times S^2$ examples remains open.

For the identity-glued rank-six family, the geometric basis gives
intersection form $3H$ and hence the topological type
$\#_3(S^2\times S^2)$.  Its smooth type is not determined here.

\begin{remark}[On standardness]
The Seiberg--Witten calculations above are consistent with the possibility
that some of the manifolds constructed here are standard rather than exotic.
In fact, it seems quite possible that this occurs for some of the examples.
A careful analysis of the corresponding handlebody diagrams may settle the
question. Related handle-calculus phenomena for cusp and fishtail neighborhoods,
and for logarithmic transforms on $S^2\times S^2$, appear in
\cite{Akbulut99,Akbulut13Log}.
The smooth classification of these manifolds, including the marked handle-calculus problem above, will be pursued in future work.
\end{remark}

\section*{Acknowledgments}

My interest in the questions considered here originated in conversations with Cliff Taubes, and I am grateful to him for several very helpful discussions. I also thank Çağrı Karakurt and B. Doug Park for several enjoyable and stimulating conversations about these four-manifolds and related questions. An LLM-based tool was used to assist with the preparation of portions of the text, including grammar and with the generation of the figures. All mathematical content is due to the author, who takes full intellectual responsibility for the content of this paper.

\end{document}